\newtheorem{theo}{Theorem}
\newtheorem{prop}[theo]{Proposition}
\newtheorem{exam}[theo]{Example}
\newtheorem{lemm}[theo]{Lemma}
\theoremstyle{remark}
\newtheorem{rema}[theo]{Remark}
\theoremstyle{definition}
\newtheorem{defi}[theo]{Definition}
\newtheorem*{claim}{Claim}
\newenvironment{manualtheorem}[1]{%
	\manualtheoreminner
}{\endmanualtheoreminner}
\newcommand{\R}{\mathbb R}
\newcommand{\Pb}{\mathcal P}
\newcommand{\N}{\mathbb N}
\newcommand{\esup}{\mbox{\it essup\ }}
\newcommand{\varX}{\mathfrak X}
\newcommand{\varY}{\mathfrak Y}
\newcommand{\xbar}{\overline{x}}
\newcommand{\zero}{\mathcal N}
\newcommand{\B}{\mathcal B}
\def\v{\varphi}
\def \sp#1{\langle #1\rangle}
\def\ICM{infinitely cyclically monotone }
\title{60 years  of cyclic monotonicity: a survey}
\author{L. De Pascale, A. Kausamo, K. Wyczesany}
\date{}
\begin{document}

\maketitle

\begin{center}
    \thispagestyle{empty}
    \vspace*{\fill}
 \textit{Dedicated to Prof. R. T. Rockafellar in occasion of his 90th birthday.}
    \vspace*{\fill}
\end{center}

\iffalse
\section*{Choices made}
\begin{itemize}
	\item $c$-cyclic monotonicity and not cyclical monotonicity;
	\item $c$-subgradient
\end{itemize}

\fi

\section{Aims and scopes}
The primary purpose of this note is to provide an instructional summary of the state of the art regarding cyclic monotonicity and related notions. We will also present how these notions are tied to optimality in the optimal transport (or Monge-Kantorovich) problem, even though cyclic monotonicity is simply a property of a set and can be studied on its own merit.

Cyclic monotonicity arose from the problem of characterising the subgradient of a convex function. In \cite{minty}, Minty proved monotonicity, which can also be called 2-cyclic monotonicity, of the subgradient. To the best of our knowledge, the definition of cyclic monotonicity was first introduced in 1966 by Rockafellar in \cite{rockafellar1966characterization}, where he proved that maximal cyclic monotonicity characterizes the subgradients of convex functions. 

At the beginning of the '90s, the study of cyclic monotonicity met the Monge-Kantorovich mass transport problem, and it is the connection between the two problems that brought out numerous developments and generalisations as well as many new problems. We will highlight these links and regard cyclic monotonicity as a geometric notion of optimality of a transport plan (see Section \ref{sec:cyclic_mon} for a review of the relevant notions).  

The optimal transport or Monge-Kantorovich problem is widely known in the literature, and there are several extensive books that allow readers of any level to learn the basic as well as the advanced theory (e.g. see \cite{ambrosio-gigli,villani-book, santambrogio2015optimal}). Let $X$ and $Y$ be topological spaces and let $\mu$ and $\nu$ be Borel probability measures on $X$ and $Y$, respectively. We denote by $p_Z$ the projection onto a subspace $Z$, and we set
\[
\Pi (\mu, \nu) =\{ \gamma \in \Pb (X \times Y) \ : \ p_X (\gamma)=\mu, \ \  p_Y (\gamma) =\nu\}.
\]
This is the set of all {\it transport plans} between $\mu$ and $\nu$, i.e. all probability measures $\gamma$ on the product space $X\times Y$ such that for any Borel sets $A\subset X, \, B\subset Y$ we have $\gamma (A\times Y) = \mu (A)$ and $\gamma (X\times B)= \nu (B)$.
Given a Borel cost function $c: X \times Y \to \R \cup \{+ \infty\}$ the Kantorovich formulation of the optimal transportation problem is to find a plan that minimises the total cost, that is 
\begin{equation}\label{kantorovich}
	\min_{\gamma \in \Pi (\mu,\nu)} \int c(x,y) d \gamma.
\end{equation}

It is intuitively clear that if a plan $\gamma$ is optimal with respect to a cost $c$, then for any number $m\in \N$ of points $\{(x_i,y_i)\}_{i=1}^m \in {\rm supp}(\gamma)$, the total transport cost of this coupling, which can be written as $\sum _{i=1}^m c(x_i,y_i)$, should increase if instead we pair $x_i$'s and $y_j$'s in a different way. This is exactly the concept behind \emph{$c$-cyclic monotonicity} that will be studied in detail in the following section.

\section{Cyclic monotonicity, potentials and generalizations}\label{sec:cyclic_mon}

\subsection{Monotonicity and cyclic monotonicity}

Let $X=Y=\R^n$. Recall that a function $f:X\to Y$ is monotone if for any $(x_i,y_i), (x_j,y_j) \in ~{\rm graph} (f)$ we have that 
\[\sp{x_i-x_j, y_i-y_j} \ge 0.\]

%Let $X$ be a topological vector space over $\R$ and let $X^*$ be its topological dual. Recall that a function $f:X\to X^*$ is monotone if for any $(x_i,x_i^*),(x_j,x_j^*) \in {\rm graph} (f)$ we have that 
%$$ \sp{ x_i- x_j , x_i^* - x_j^* } \ge 0. $$
This classical notion, also referred to as $2$-monotonicity, can be generalised to $N$-point-cyclic monotonicity, and further to cyclic monotonicity, which we define below, and which was  introduced by Rockafellar in \cite{rockafellar1966characterization}. Let us remark that Rockafellar considered a more general setting, where $X$ is a topological vector space over $\R$ and $Y=X^*$ is its topological dual.

By ${\mathcal S_k}$ we denote the set of all permutations on $k$ elements. 

\begin{defi} A subset $\Gamma \subset X \times Y$ is \emph{cyclically monotone} if for all $k \in \N$ and for all $\{(x_i, y_i)\}_{i=1}^{k} \subset \Gamma$ 
\[   \sum_{i=1}^k \langle  x_i-x_{\sigma(i)},  y_i \rangle \geq 0,
\]
holds for any permutation $\sigma\in {\mathcal S_k}$, where we set $x_{k+1}:=x_1$.
\end{defi}  

By a basic theorem on permutations of $k$ objects the above condition is equivalent to simply requiring that $\sum_{i=1}^k \langle  x_i-x_{i+1},  y_i \rangle \geq 0$ holds for all $k\in \N$ and any $k$-tuple of points $\{(x_i, y_i)\}_{i=1}^{k} \subset \Gamma$.
Moreover, note that cyclic monotonicity is a property of a set, and that if $\Gamma$ is cyclically monotone then any subset of $\Gamma$ has the same property.
An important fact is that if $\v : X \to [-\infty,+\infty]$ is a convex function then $\partial \v = \{(x,y): \, \v(z)-\v(x)\ge \sp{y,z-x}, \ \forall z \}\subset X\times Y$, is cyclically monotone.
$\partial \v$ (see, e.g. the books \cite{moreau1966functional,rockafellar1997convex,rockafellar2009variational}) is usually called the ``subdifferential''  of $\v$, and any $y \in \partial \v (x)$  is a subgradient of $\v$ at $x$.

The celebrated Rockafellar's theorem states that the inverse is also true: 
\begin{theo}[{\cite[Theorem 1]{rockafellar1966characterization}}, see also {\cite[Theorem 24.8]{rockafellar1997convex,rockafellar2015convex}}]\label{rockafellar} Let $\, \Gamma \subset X \times Y$ be a cyclically monotone set. Then there exists a convex function 
$\v: X \to \R$ such that $\Gamma \subset \partial \v$. 
\end{theo}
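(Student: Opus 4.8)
The plan is to construct the convex function $\v$ explicitly via an infimal convolution of affine functions built from the cycle condition. The key idea is Rockafellar's celebrated trick: fix a single base point $(x_0,y_0)\in\Gamma$ and, for each $x\in X$, define
\[
\v(x) = \sup\Big\{ \sp{y_k, x-x_k} + \sp{y_{k-1}, x_k-x_{k-1}} + \cdots + \sp{y_0, x_1-x_0} \Big\},
\]
where the supremum runs over all finite chains $(x_0,y_0),(x_1,y_1),\dots,(x_k,y_k)\in\Gamma$ of arbitrary length $k\in\N$. Intuitively, $\v(x)$ records the largest ``potential gain'' one can accumulate by walking through points of $\Gamma$ from $x_0$ to $x$, and the telescoping structure is exactly what the cyclic monotonicity inequality controls.

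First I would check that $\v$ is well-defined and convex. Convexity is immediate and requires no hypothesis: each chain contributes a term that is affine (indeed linear) in the free variable $x$, namely $x\mapsto \sp{y_k,x}+\text{const}$, and a supremum of affine functions is always convex and lower semicontinuous. The substantive point is that $\v$ does not take the value $+\infty$ everywhere, and in particular that $\v(x_0)$ is finite; this is where cyclic monotonicity enters. Evaluating the defining supremum at $x=x_0$ produces, for each chain, the quantity $\sum_{i} \sp{y_i, x_{i+1}-x_i}$ with $x_{k+1}=x_0$, which is precisely (the negative of) the cyclic sum in the definition. Cyclic monotonicity forces this to be $\le 0$ for every chain, so $\v(x_0)\le 0$; taking the trivial one-point chain gives $\v(x_0)\ge 0$, hence $\v(x_0)=0$ and $\v$ is finite somewhere, so $\v>-\infty$ everywhere.

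Next I would verify the inclusion $\Gamma\subset\partial\v$. Fix $(\bar x,\bar y)\in\Gamma$; I must show $\bar y\in\partial\v(\bar x)$, i.e. $\v(z)-\v(\bar x)\ge\sp{\bar y, z-\bar x}$ for all $z$. The mechanism is that any competing chain ending at $\bar x$ can be extended by appending $(\bar x,\bar y)$ and then reaching an arbitrary $z$, so that $\v(z)\ge \v(\bar x)+\sp{\bar y,z-\bar x}$ up to an $\varepsilon$; concretely, take a chain nearly achieving $\v(\bar x)$, adjoin the link from $\bar x$ to $z$ weighted by $\bar y$, and read off the subgradient inequality directly from the definition of the supremum. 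This step is essentially bookkeeping once the telescoping sum is set up correctly.

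The main obstacle is the finiteness and real-valuedness claimed in the statement: the construction as written yields $\v:X\to(-\infty,+\infty]$, whereas the theorem asserts $\v:X\to\R$. In full generality (e.g. $X$ an infinite-dimensional topological vector space) the supremum genuinely can be $+\infty$ at some points, so the honest statement is that $\v$ is a proper convex function with $\Gamma\subset\partial\v$; to literally get a real-valued $\v$ one restricts attention to the convex hull of $\mathrm{dom}(\v)\supset p_X(\Gamma)$, or invokes that on the relevant domain the supremum stays finite. I expect the paper's convention (likely $X=\R^n$, or allowing $\v$ to be extended-real-valued) resolves this, so the careful point is to state precisely on which set $\v$ is finite and to confirm that this set contains $p_X(\Gamma)$, which is exactly where each $(\bar x,\bar y)\in\Gamma$ lives and where the subgradient relation is needed.
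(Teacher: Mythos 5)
Your proposal is correct and is essentially the paper's own proof: the paper proves Theorem \ref{rockafellar} constructively via the more general Theorem \ref{RocRuc}, where the potential $\varphi(x)=\inf\{c(x,y_n)-c(x_n,y_n)+\dots+c(x_1,y_0)-c(x_0,y_0)\}$ over chains in $\Gamma$ is, for $c(x,y)=-\langle x,y\rangle$, exactly your supremum of telescoping affine functions up to the sign dictionary $\v=-\varphi$, with the same three steps (base point normalization $\v(x_0)=0$ from cyclic monotonicity, chain extension for the subgradient inequality, and the resulting finiteness on $p_X(\Gamma)$). Your closing caveat about real- versus extended-real-valuedness is also apt and consistent with the paper, whose construction likewise only guarantees a proper (not everywhere finite, in the convex formulation) potential.
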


We say that a function $\v$ is a \emph{potential} for $\Gamma$ if its subgradient contains the set $\Gamma$.
The proof of this theorem is constructive and we present it in the more general setting of Theorem \ref{RocRuc}.

Rockafellar's result may be  further refined proving that  $\Gamma = \partial \v$ (so it is not a proper subset)  if it is maximal ({\it i.e.}  not properly contained in any cyclically monotone set).  Maximal $2$-cyclic monotone sets were studied, in particular, by Minty \cite{minty}.

It turns out that cyclic monotonicity can be equivalently described using the square of the Euclidean norm, denoted by $\| \cdot\|$. Indeed,  a set $\Gamma \subset X \times Y$ is cyclically monotone if  for all $k \in \N$ and for all $\{(x_i, y_i)\}_{i=1}^{k} \subset \Gamma$ 
\[
\sum_{i=1}^k \| x_i-y_i \|^2 \leq \sum_{i=1}^k \| x_i-y_{i+1} \|^2. 
\]
Clearly, this can be seen by expanding the above expression (see also Remark \ref{rem:non-mixing}). Further, this observation is also true if $X$ is a Hilbert space $H$ and $Y=X^*$ with the canonical identification $X^*=H$, where we denote the norm in $H$ by $\| \cdot \| $.

\subsection{Extensions of cyclic monotonicity}
Some natural extensions of the notion above will be of interest. The first one involves a more general cost function.

\subsubsection{$c$-cyclic monotonicity}
Let $X $ and $Y$ be two sets and let
\[c: X \times Y \to \R \cup \{ + \infty \}\] be a \emph{cost function}. 
We say that $\Gamma \subset X \times Y$ is \emph{$c$-cyclically monotone} if for all $k \in \N$ and $~\{(x_i, y_i)\}_{i=1}^{k} \subset \Gamma$  and for any permutation $\sigma \in {\mathcal S}_k$   
\[
\sum_{i=1}^k  c(x_i,y_i ) \leq \sum_{i=1}^k  c(x_i,y_{\sigma(i)}).
\]

Clearly, cyclic monotonicity corresponds to the choice  $c(x,y)=-\sp{x,y}$, or equivalently $c(x,y)=\|x-y\|^2/2$, which we will refer to as the \emph{quadratic cost}.

\begin{rema}\label{rem:non-mixing}
Note that, as in the case of the quadratic cost, adding 'non-mixing' terms to the cost does not influence $c$-cyclic monotonicity. More precisely, let $c(x,y)$ be a cost function and define $\tilde{c}(x,y)=c(x,y)+p(x)+q(y)$ for some functions $p,q$. Then a set is $c$-cyclically monotone if and only if it is $\tilde{c}$-cyclically monotone (see e.g. \cite{smith1992hoeffding}).
\end{rema} 

It was shown in \cite{rochet,ruschendorf} that if the cost $c$ attains only finite values then, as in the case of cyclic monotonicity, a set is $c$-cyclically monotone if and only if it is contained in the graph of the $c$-subdifferential of a $c$-class function. Let us recall the relevant definitions.  

\begin{defi}
 The \emph{$c$-transform} of a function $\v:X\to [-\infty, +\infty )$ is a function $\v^c: Y\to [-\infty, +\infty]$ given by 
\[ \v^c(y) = \inf_{x\in X} \left(c(x,y)-\v(x)\right).\]   
Similarly, when $\psi: Y\to [-\infty, +\infty)$, we use the same notation to define
\[ \psi^c(x) = \inf_{y\in Y} \left(c(x,y)-\psi(y)\right).\]
\end{defi}

This type of conjugate functions appeared in \cite{moreau1966functional} and is an elemental notion in the theory of optimal transport.
The $c$-transform is order-reversing (with respect to a pointwise order) and it is an involution on its ``$X$-image'' and ``$Y$-image'', respectively. In many applications $X=Y$ and $c$ is symmetric, i.e. $c(x,y)=c(y,x)$, in which case the image 
\[{\mathcal C} := \{\v:X\to [-\infty, +\infty )\ : \, \exists \psi:X\to [-\infty, +\infty ) \text{ such that } \psi^c=\v \}
\] 
is called the \emph{$c$-class}.

\begin{defi}\label{cheguaio}%[$c$-subgradient]
Given a function $\varphi: X\to [-\infty,+\infty)$ its \emph{$c$-subgradient} is given by
\[\partial^c\varphi=\{(x,y)\in X\times Y~:~c(x,y)<\infty\text{ and }c(x,y)-\varphi(x)\le c(z,y)-\varphi(z)\text{ for all }z\in X\}.\]
\end{defi}
\begin{rema}
There is an alternative way of defining the $c$-subgradient of a function $~{\varphi: X\to [-\infty,+\infty)}$, that is
\[\tilde\partial^c\varphi=\{(x,y)\in X\times Y ~:~\varphi(x)+\varphi^c(y)=c(x,y)<+\infty\}.\]
Since it is possible that $\varphi(x)=-\infty$ and $\varphi^c(y)=+\infty$, some assumptions must be added to ensure that this definition is well-given. For example, if we know that 
\[\text{There exists }x'\in X\text{ such that }c(x',y)\in\R\text{ and }\varphi(x')\in\R,\]
then $\varphi^c(y)\in \R\cup\{-\infty\}$ and $(x,y)\in \partial^c\varphi$ if and only if $(x,y)\in \tilde\partial^c\varphi$. 
\end{rema}

Clearly, for a symmetric cost,  $(x,y) \in \partial^c \v$ if and only if $(y,x) \in \partial^c \v^c$. 

The following theorem is the generalisation of Rockafellar's theorem due to Rochet and R\"uschendorf.

\begin{theo}[{\cite[Theorem 1]{rochet}}, {\cite[Lemma 2.1]{ruschendorf}}]\label{RocRuc}
Let $X,\, Y$ be any sets and let $c:X \times Y\to \R$ be a cost function. A set $\Gamma\subset X\times Y$ is  $c$-cyclically monotone if and only if there exists a $c$-class function $\v$ such that $\Gamma \subset \partial^c \v$, i.e. $\v$ is a potential for $\Gamma$.
\end{theo}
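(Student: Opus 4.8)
The plan is to prove both directions of the equivalence in Theorem~\ref{RocRuc}.

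The easy direction is to show that if $\Gamma \subset \partial^c \v$ for some function $\v$, then $\Gamma$ is $c$-cyclically monotone. This follows directly from unpacking the definition of the $c$-subgradient. Given points $\{(x_i,y_i)\}_{i=1}^k \subset \Gamma$ and a permutation $\sigma$, the membership $(x_i,y_i)\in\partial^c\v$ gives, for each $i$, the inequality $c(x_i,y_i)-\v(x_i) \le c(x_{\sigma(i)},y_i) - \v(x_{\sigma(i)})$ by choosing $z = x_{\sigma(i)}$. Summing over $i$ and observing that $\sum_i \v(x_i) = \sum_i \v(x_{\sigma(i)})$ (since $\sigma$ merely permutes the indices), the potential terms cancel and we obtain $\sum_i c(x_i,y_i) \le \sum_i c(x_{\sigma(i)},y_i)$, which is exactly $c$-cyclic monotonicity (after relabelling the permutation). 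Note that the $c$-class assumption is not needed here; any potential suffices.

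The substantive direction is the converse: given a $c$-cyclically monotone $\Gamma$, I would explicitly construct a potential $\v$, mimicking Rockafellar's original argument. Fix a base point $(x_0,y_0)\in\Gamma$ and define, for $x \in X$,
\[
\v(x) = \inf\left\{ \sum_{i=0}^{m} \bigl(c(x_{i+1},y_i) - c(x_i,y_i)\bigr) \right\},
\]
where the infimum is taken over all finite chains $(x_0,y_0),(x_1,y_1),\dots,(x_m,y_m)\in\Gamma$ and one sets $x_{m+1}:=x$. The $c$-cyclic monotonicity of $\Gamma$ is precisely what guarantees that $\v(x_0)\ge 0$ (hence $\v$ is not identically $-\infty$, and one checks $\v(x_0)=0$), preventing the infimum from degenerating. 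I would then verify that this $\v$ is a potential by showing $(\bar x,\bar y)\in\Gamma \Rightarrow (\bar x,\bar y)\in\partial^c\v$: for any $z\in X$, appending $(\bar x,\bar y)$ to an almost-optimal chain for $z$ shows $\v(\bar x) \le \v(z) + c(\bar x,\bar y)-c(z,\bar y)$, which rearranges to the $c$-subgradient inequality $c(\bar x,\bar y)-\v(\bar x)\le c(z,\bar y)-\v(z)$.

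The main obstacle, and the place requiring genuine care, is establishing that $\v$ is a genuine $c$-class function rather than merely an arbitrary potential. By the previous step $\v$ is an infimum of functions of $x$ indexed by chains, each of the form $z\mapsto c(x,y_{\text{last}})+\text{const}$; this exhibits $\v$ as an infimum of $c$-translates, which is exactly the structure of a $c$-transform, so one argues $\v\in\mathcal{C}$ by rewriting the defining infimum as $\v = \psi^c$ for a suitable $\psi:Y\to[-\infty,+\infty)$. The hypothesis that $c$ is everywhere finite (real-valued) is what keeps all the differences $c(x_{i+1},y_i)-c(x_i,y_i)$ well-defined and the $c$-transform manipulations free of $\infty-\infty$ ambiguities; this is the delicate point one must track throughout, and it is precisely why the theorem is stated for finite costs.
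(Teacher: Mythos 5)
Your proposal is correct and follows essentially the same route as the paper: the substantive direction is exactly the Rockafellar--Rochet--R\"uschendorf construction of \eqref{potential}, defining $\varphi$ as an infimum over chains in $\Gamma$ anchored at a base point $(x_0,y_0)$, using $c$-cyclic monotonicity to get $\varphi(x_0)=0$, and appending $(\bar x,\bar y)$ to a near-optimal chain for $\varphi(z)$ to obtain the $c$-subgradient inequality, with the real-valuedness of $c$ ensuring finiteness exactly as in the paper. Your additional material --- the easy direction via summing subgradient inequalities (which the paper leaves implicit, and which tacitly needs the standard properness convention $\varphi\not\equiv-\infty$ so the potential terms cancel) and the explicit verification that $\varphi$ is in the $c$-class by regrouping the chain infimum as a $c$-transform $\psi^c$ --- merely spells out steps the paper asserts without detail.
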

\begin{proof}
Let $\Gamma$ be a $c$-cyclically monotone set. Let $(x_0,y_0) \in \Gamma$ and define $\varphi: X \to \R$ as follows
\begin{equation}\label{potential}
\begin{split}
	\varphi(x)= \inf \{c(x,y_n)-c(x_n,y_n)+c(x_n,y_{n-1})-c(x_{n-1},y_{n-1})+\dots+\\ c(x_1,y_0)-c(x_0,y_0):~  
	(x_i,y_i)\in \Gamma \ \mbox{for}\ i=1,\dots,n, \ n\in \N.
	\}  
 \end{split}
 \end{equation}
Since $\Gamma$ is $c$-cyclically monotone, $\varphi (x_0)=0$  (taking $n=1$ and $(x_1,y_1)=(x_0,y_0)$) and $\varphi$ is a $c$-class function. %\luigi{I'm not so sure that this is trivial. Sure $\varphi$ is $c$-concave}.
To show that $\Gamma \subset \partial^c \varphi$ consider $(x',y')\in \Gamma$, and $\varepsilon>0$ and let $(x_1,y_1), \dots, (x_n,y_n)\in \Gamma$ be a chain such that 
\[ c(x',y_n)-c(x_n,y_n)+c(x_n,y_{n-1})-c(x_{n-1},y_{n-1})+\dots+ c(x_1,y_0)-c(x_0,y_0) \leq  \varphi (x')+\varepsilon.\]
Let $z\in X$, since $(x',y') \in \Gamma$  we can consider the chain $(x_1,y_1), \dots, (x_n,y_n),(x',y')$ among the admissible chain for the definition of $\varphi(z)$ and get 
\[\varphi(z)  \leq c(z,y') -c(x',y') +\varphi (x')+\varepsilon.\]
Since $\varepsilon$ is arbitrary this implies $(x',y')\in \partial^c \varphi$. Since $\varphi(x_0)=0$ and $c$ assumes only real values, this last inequality, choosing $(x',y')=(x_0,y_0)$, also implies $\varphi (z) < +\infty$.
\end{proof}

The above theorem guarantees the existence of a potential for finite-valued cost functions. However, if the cost does attain the value $+\infty$ this statement is no longer true. See, for instance, \cite{artstein2022rockafellar} where the authors present  an example of a set that is $c$-cyclically monotone but whose graph is not supported on  the $c$-subgradient of any function. 
In the attempt to use formula \eqref{potential} above in a more general setting, in \cite{Schachermayer-optimal-and-better}, the notion of \emph{$c$-connectivity} is proposed and is shown to be sufficient to guarantee the existence of a potential.
\begin{defi}\label{connecting}
Let $X,Y$ be sets, $c:X\times Y\to \R\cup \{+\infty\}$ be a cost function and $\Gamma\subset X\times Y$. We denote $(x,y)\precsim (\tilde x,\tilde y)$ if there exist pairs $(x_0,y_0), \dots, (x_n,y_n)\in\Gamma$ with $(x_0,y_0)=(x,y)$ and $(x_n,y_n)= (\tilde x, \tilde y)$ such that $c(x_1,y_0), \dots, c(x_n,y_{n-1})< +\infty$.
If $(x,y)\precsim(\tilde x,\tilde y)$ and $(\tilde x, \tilde y)\precsim (x,y)$, we denote $(x,y)\approx (\tilde x,\tilde y)$. We say that $\Gamma$ is \emph{$c$-connecting} if $c(x,y)$ is finite for all $(x,y)\in\Gamma$ and if for all $(x,y),(\tilde x,\tilde y)\in\Gamma$ we have $(x,y)\approx (\tilde x,\tilde y)$. 
\end{defi}
Note that $\approx$ is an equivalence relation on $X\times Y$. 
If the cost function is real-valued then all sets are trivially connecting. Hence, $c$-connecting sets do not need to be $c$-cyclically monotone and so, on its own, $c$-connectivity does not capture optimality in a way that $c$-cyclic monotonicity does.
However, if a $c$-connecting set $\Gamma$ is also $c$-cyclically monotone then, for any cost function, there exists a $c$-potential which can be obtained using Rockafellar's approach. Indeed, the $c$-connectivity, together with $c$-cyclical monotonicity, implies that the function $\varphi$ defined by the formula \eqref{potential} is real-valued everywhere on the projection $p_X (\Gamma)$, which means that one can proceed with the proof as in the case of a real-valued cost function $c$. 

However, in general, the $c$-connectivity of $\Gamma$ is not a necessary condition for the existence of a potential, which can be seen on simple discrete examples.
In \cite{artstein2022rockafellar}, the authors show that for general costs \emph{$c$-path-boundedness} is the notion equivalent to the existence of a potential. We discuss it in the next subsection.

\subsubsection{$c$-path-boundedness}

When the cost function is allowed to take infinite values, an important case that has been considered on the case-by-case basis (e.g. see \cite{bertrand-gauss-curvature,bertrand-pratelli-puel,bertrand-puel,brenier-relativ-cost,gangbo-oliker}), the $c$-cyclic monotonicity is not sufficient to guarantee the existence of a potential. It was shown in \cite{artstein2022rockafellar} that a natural condition to consider is $c$-path-boundedness.

\begin{defi}\label{def:c-path-bdd}
Fix sets $X,\,Y$ and $c:X\times Y\to \R\cup \{+\infty\}$. We say that $\Gamma\subset X\times Y$  is \emph{$c$-path-bounded} if $c(x,y)<\infty$ for any $(x,y)\in \Gamma$, and for any $(x,y)\in \Gamma$ and $(z,w)\in \Gamma$, there exists a constant $M=M((x,y), (z,w))\in \R$ such that for any $m\in {\mathbb N}$ and any $\{(x_i, y_i): 1\le i\le m\} \subset \Gamma$ we have 
\[ c(x,y)-c(x_1,y) + \sum_{i=1}^{m-1}\big(c(x_i, y_i)-c(x_{i+1}, y_{i})\big) + c(x_m,y_m)-c(z,y_m) \le M \]
\end{defi} 

A $c$-path-bounded set must be $c$-cyclically monotone; indeed, if there is some cycle for which the sum is positive, then, choosing one point on this cycle to be both the beginning and the end of a path, one can replicate it many times to get paths with arbitrarily large sums.  Further, if the cost is finite-valued then $c$-path-boundedness and $c$-cyclic monotonicity are equivalent as we may choose $M=c(x,w)-c(z,w) <+\infty$.

As mentioned before, for costs attaining infinite values, $c$-connectivity together with $c$-cyclic monotonicity implies $c$-path-boundedness.
However, a $c$-path bounded set is not always $c$-connecting. Indeed, consider, for example, any cost function $c:\R\times \R \to \R \cup \{+\infty\}$, which is finite on the set $S:= \{(x,y)\in \R^2:\ x+y> 0\}$
 and $+\infty$ otherwise. We may easily choose $\Gamma$ to consist of two points $(x_1,y_1),\, (x_2,y_2)$ such that $(x_i,y_i)\in S$ for $i=1,2$, and which form a $c$-cyclically monotone set, but such that $(x_1,y_2)\notin S$ which means that $\Gamma$ is not $c$-connecting. But since there are only two points in $\Gamma$ it is easy to check that it is $c$-path-bounded, and one can easily find a potential. For a much more elaborate example but with more details provided see Example \ref{ex:connecing+c-pbdd} in Appendix \ref{appendix:example}.

The following theorem established in \cite{artstein2022rockafellar} states that $c$-path-boundedness of a set is equivalent to the existence of a potential. 

\begin{theo}[{\cite[Theorem 1]{artstein2022rockafellar}}]
	Let $X,\,Y$ be two arbitrary sets and $c:X\times Y\to  \R\cup \{+\infty\}$ an arbitrary cost function. For a given subset $\Gamma \subset X\times Y$ there exists a $c$-class function $~{\varphi:X\to[-\infty, +\infty]}$ such that $\Gamma\subset \partial^c \varphi$ if and only if	$\Gamma$ is  $c$-path-bounded. 	
\end{theo}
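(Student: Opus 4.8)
The statement is an equivalence, and I would prove its two implications by rather different means: the implication ``potential $\Rightarrow$ $c$-path-bounded'' is a short telescoping argument, whereas the converse is the substantial half and requires a Rockafellar-type construction adapted to disconnected sets.

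\textbf{From a potential to path-boundedness.} Assume $\Gamma\subset\partial^c\varphi$ for a $c$-class function $\varphi$. The requirement $c(x,y)<\infty$ on $\Gamma$ is built into the definition of $\partial^c$, so the first clause of Definition \ref{def:c-path-bdd} is immediate. For the path estimate I would fix $(x,y),(z,w)\in\Gamma$ and an arbitrary chain $(x_1,y_1),\dots,(x_m,y_m)\in\Gamma$, and then test the subgradient inequality defining $\partial^c\varphi$ at cleverly chosen competitors: applied to $(x_i,y_i)$ with competitor $x_{i+1}$ it gives $c(x_i,y_i)-c(x_{i+1},y_i)\le\varphi(x_i)-\varphi(x_{i+1})$, while the two boundary terms come from testing $(x,y)$ at $x_1$ and $(x_m,y_m)$ at $z$. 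Summing all of these, the right-hand side telescopes to $\varphi(x)-\varphi(z)$, so the path sum of Definition \ref{def:c-path-bdd} is bounded by $M:=\varphi(x)-\varphi(z)$. The only delicate point is that $M$ must be a real number; this reduces to verifying that $\varphi$ is finite on $p_X(\Gamma)$, which I would extract from the fact that $\varphi$ is a $c$-class function together with $c<\infty$ on $\Gamma$ (a step where the extended-real conventions must be handled with care).

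\textbf{From path-boundedness to a potential.} For the converse I would imitate the constructive formula \eqref{potential}. Fixing a base point $(x_0,y_0)\in\Gamma$, set
\[\varphi(x)=\inf\Big\{c(x,y_n)-c(x_n,y_n)+\dots+c(x_1,y_0)-c(x_0,y_0):(x_i,y_i)\in\Gamma,\ n\in\N\Big\}.\]
Being an infimum of functions of the form $x\mapsto c(x,y)+\mathrm{const}$, this $\varphi$ is automatically a $c$-class function, and the chain-extension argument from the proof of Theorem \ref{RocRuc} (append the tested pair $(x',y')$ to a near-optimal chain) shows directly that every $(x',y')\in\Gamma$ lies in $\partial^c\varphi$. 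The role of path-boundedness is to prevent this infimum from collapsing to $-\infty$: term by term, the Rockafellar sum defining $\varphi(x)$ is exactly the negative of the path sum of Definition \ref{def:c-path-bdd} for a path from $(x_0,y_0)$ to a point $(x,y_x)\in\Gamma$, so the bound $M((x_0,y_0),(x,y_x))$ turns into the lower bound $\varphi(x)\ge -M((x_0,y_0),(x,y_x))>-\infty$. Combined with an upper bound from any finite-cost chain, this pins $\varphi$ to a real value on the whole $\approx$-class of the base point.

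\textbf{The main obstacle.} The genuine difficulty, and what I expect to be the crux, is that a $c$-path-bounded set need not be $c$-connecting (as the example following Definition \ref{connecting} shows): a single base point controls only its own $\approx$-class, and on every other class the formula above returns $+\infty$. My plan is therefore to run the construction separately on each $\approx$-class, obtaining local potentials finite on each $p_X(\Gamma_\alpha)$ and determined up to an additive constant, and then to glue them by selecting constants $k_\alpha$. The within-class subgradient inequalities hold for any choice of $k_\alpha$; the cross-class inequalities are nontrivial only across the finitely-valued links of $c$, and each such link imposes a constraint of the form $k_\beta-k_\alpha\le(\text{finite quantity})$. The existence of a simultaneous solution $\{k_\alpha\}$ is precisely a no-negative-cycle condition on the quotient of $\Gamma$ by $\approx$, and this is exactly what $c$-path-boundedness guarantees, since any cycle through such cross-links together with within-class chains is itself an admissible path in $\Gamma$ whose sum is bounded. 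I would organise this gluing by a Zorn/transfinite extension, adding one class at a time and using path-boundedness to certify each step, and finally extend the glued function from $p_X(\Gamma)$ to all of $X$ through its $c$-transform, checking the subgradient inequalities at competitors outside $p_X(\Gamma)$ with the usual attention to the cases where $\varphi(z)=+\infty$ and $c(z,y)=+\infty$ occur simultaneously.
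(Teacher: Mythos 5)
Your forward implication is fine and matches the standard argument (it is essentially how the path sums in Definition \ref{def:c-path-bdd} arise in \cite{artstein2022rockafellar}), and you correctly flag the only delicate point there, namely that $\varphi$ must be real-valued on $p_X\Gamma$ — though your proposed justification is too quick: $c$-class membership plus finiteness of $c$ on $\Gamma$ does not by itself exclude infinite values (e.g.\ $\psi\equiv-\infty$ has $\psi^c\equiv+\infty$, which is a $c$-class function); this finiteness is genuinely part of the content of Theorem \ref{thm:potential-means-inequalities} and depends on the conventions in Definition \ref{cheguaio}. The real problem, however, is in your converse. Your plan rests on the premise that on each $\approx$-class the potential produced by formula \eqref{potential} is ``determined up to an additive constant,'' so that gluing reduces to choosing shifts $k_\alpha$. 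That premise is false: within a $c$-connecting class the normalized solutions of the inequalities $\varphi(x)\le\varphi(x')+c(x,y)-c(x',y)$ form an order interval that can have unbounded width, and formula \eqref{potential} always selects its \emph{maximal} element. Cross-class constraints may force values strictly (and unboundedly) below this maximal shape, in which case no choice of constants $k_\alpha$ works even though a potential exists. Concretely, take $\Gamma=\{(x',y')\}\cup\{(z_i,w_i)\}_{i\ge0}$ with $c=0$ on $\Gamma$, $c(z_n,w_0)=0$, $c(z_0,w_n)=n$, $c(z_n,y')=-n$ for $n\ge1$, and $c=+\infty$ otherwise. There are two $\approx$-classes, and the set is $c$-path-bounded (one checks, e.g., $M\big((x',y'),(z_N,w_N)\big)=N$ and $M=0$ for the other pairs), and $\varphi(x')=\varphi(z_0)=0$, $\varphi(z_n)=-n$ solves the system of Theorem \ref{thm:potential-means-inequalities}. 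But formula \eqref{potential} on the class of $(z_0,w_0)$ gives $\varphi_\beta(z_n)=0$ for all $n$, while the cross links demand $\varphi_\beta(z_n)+k_\beta\le k_\alpha-n$; no constant $k_\beta$ satisfies this for all $n$. So your gluing step fails exactly where the theorem is hardest.

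Two further remarks. First, ``no negative cycle on the quotient'' is not the right solvability criterion: across distinct $\approx$-classes the constraint digraph is automatically acyclic (a directed cycle of finite links would merge the classes), yet infinite acyclic difference-constraint systems can be unsolvable; the correct criterion — and the one the original proof isolates — is a \emph{uniform bound on path sums between each fixed pair of indices}, which is precisely $c$-path-boundedness. Second, once you abandon rigid per-class representatives you must keep the values $\varphi(x)$ at every point of $p_X\Gamma$ as free unknowns, and then your scheme collapses into the paper's actual proof: reformulate the existence of a potential as the system $c(x,y)-c(z,y)\le\varphi(x)-\varphi(z)$ indexed by $(x,y),(z,w)\in\Gamma$ (Theorem \ref{thm:potential-means-inequalities}), and prove by Zorn's lemma that a system $\eta_{i,j}\le a_i-a_j$ with $\eta_{i,j}\in[-\infty,+\infty)$ is solvable if and only if all path sums $\sum_k\eta_{i_k,i_{k+1}}$ are bounded above by constants $M(i,j)$. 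The within-class Rockafellar formula and the $\approx$-decomposition are then unnecessary; their only safe use is the one the survey makes of $c$-connectivity in Theorem \ref{Optimal-if-connecting}, where a single class is present.
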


The proof of this theorem differs greatly from the proofs in the case of finite-valued costs. The authors reformulated the problem of finding a $c$-potential for a given set as the problem of existence of a solution to a special family of linear inequalities. More precisely, the following theorem was proved in \cite{artstein2022rockafellar}.

\begin{theo}[{\cite[Theorem 3.1]{artstein2022rockafellar}}]\label{thm:potential-means-inequalities}
	Let $c: X \times Y \to    \R\cup \{+\infty\}$  be a cost function and let $\Gamma\subset X\times Y$. 
	Then there exists a $c$-potential for $\Gamma$, namely a $c$-class function $~\varphi: X\to [-\infty, +\infty]$   such that $\Gamma \subset  \partial^c\varphi $  if and only if the following system of  {inequalities}, 
	\begin{equation}\label{eq:system-of-eq-for-potential}
	c(x,y) - c(z,y) \le \varphi(x) - \varphi(z), \end{equation}
	indexed by $(x,y), (z,w) \in \Gamma$, has a solution $\varphi: p_X \Gamma \to \R$,  
	where $~{p_X\Gamma = \{ x  \in X: \exists y   \in Y , (x,y)\in \Gamma\}}$.   
\end{theo}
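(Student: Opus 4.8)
The plan is to prove the equivalence by reading $\Gamma\subset\partial^c\varphi$ as the system \eqref{eq:system-of-eq-for-potential}: the forward implication just unfolds Definition~\ref{cheguaio}, while the reverse implication builds a potential explicitly from a solution. Before either direction I would note that both hypotheses force $c$ to be finite on $\Gamma$, so that every $c(x,y)$ with $(x,y)\in\Gamma$ is real.

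For the forward direction I would take a potential $\varphi$ and let $\varphi_0$ be its restriction to $p_X\Gamma$. Rearranging the defining inequality of $\partial^c\varphi$ for a point $(x,y)\in\Gamma$ gives $c(x,y)-c(z,y)\le\varphi(x)-\varphi(z)$ for every $z\in X$, and restricting $z$ to $p_X\Gamma$ is precisely \eqref{eq:system-of-eq-for-potential}. The one thing to justify is that $\varphi_0$ is real-valued: a $c$-class function never takes the value $+\infty$, and $(x,y)\in\partial^c\varphi$ rules out $\varphi(x)=-\infty$ as soon as $\partial^c$ coincides with $\tilde\partial^c$ (the non-degeneracy discussed after Definition~\ref{cheguaio}), since then $\varphi(x)=c(x,y)-\varphi^c(y)$ with both terms finite.

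The reverse direction carries the weight, and here I would produce the potential from a solution $\varphi_0$ of \eqref{eq:system-of-eq-for-potential} by the $c$-transform-type formula
\[
\varphi(x):=\inf_{(z,w)\in\Gamma}\bigl(c(x,w)-c(z,w)+\varphi_0(z)\bigr),\qquad x\in X,
\]
and then verify three things. First, $\varphi$ is a $c$-class function, because $\varphi=\psi^c$ for $\psi(w):=\sup\{c(z,w)-\varphi_0(z):(z,w)\in\Gamma\}$ on $p_Y\Gamma$ and $\psi:=-\infty$ elsewhere; \eqref{eq:system-of-eq-for-potential} is used to see that $\psi$ is real-valued, since for a fixed $z^{*}$ with $(z^{*},w)\in\Gamma$ it bounds every competitor $c(z,w)-\varphi_0(z)$ above by $c(z^{*},w)-\varphi_0(z^{*})$. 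Second, $\varphi$ agrees with $\varphi_0$ on $p_X\Gamma$: the admissible index $(z,w)=(x,y)$ gives $\varphi(x)\le\varphi_0(x)$, while applying \eqref{eq:system-of-eq-for-potential} to the pair $(z,w)\in\Gamma$ together with the index $x\in p_X\Gamma$ gives $c(x,w)-c(z,w)+\varphi_0(z)\ge\varphi_0(x)$ for all $(z,w)\in\Gamma$, hence $\varphi(x)\ge\varphi_0(x)$. Third, $\Gamma\subset\partial^c\varphi$: for $(x,y)\in\Gamma$ and arbitrary $z\in X$, inserting the admissible index $(x,y)$ into the infimum defining $\varphi(z)$ yields $\varphi(z)\le c(z,y)-c(x,y)+\varphi_0(x)=c(z,y)-c(x,y)+\varphi(x)$, which is exactly the inequality of Definition~\ref{cheguaio}.

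The main obstacle throughout is bookkeeping with the value $+\infty$ of the cost, which is precisely why the statement does not reduce to the finite-valued theory of Theorem~\ref{RocRuc}. In the construction one must guarantee that the supremum defining $\psi$ never equals $+\infty$ and that the infimum defining $\varphi$ is finite at the points that matter; both are delivered by \eqref{eq:system-of-eq-for-potential}, as indicated. In the forward direction the mirror-image nuisance is the possibility that $\varphi(x)=-\infty$ on $p_X\Gamma$, so that one cannot simply restrict an arbitrary extended-real potential without first invoking the non-degeneracy that makes $\partial^c$ and $\tilde\partial^c$ agree. Once these finiteness points are settled, the theorem is a direct matching of the two families of inequalities.
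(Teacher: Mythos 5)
Your proof is correct and follows essentially the same route the paper sketches right after the theorem statement: your reverse direction is precisely the ``gluing'' construction, taking $\varphi(x)=\inf_{(z,w)\in\Gamma}\bigl(c(x,w)-c(z,w)+\varphi_0(z)\bigr)$, i.e.\ the infimum of the elementary potentials $c(\cdot,w)-a_{(z,w)}$ with $a_{(z,w)}=c(z,w)-\varphi_0(z)$, with the system \eqref{eq:system-of-eq-for-potential} serving exactly as the gluing condition, while the forward direction unfolds Definition~\ref{cheguaio}. Your added bookkeeping --- ruling out $\varphi=-\infty$ on $p_X\Gamma$ via the non-degeneracy under which $\partial^c$ and $\tilde\partial^c$ agree (the remark after Definition~\ref{cheguaio}), and exhibiting $\varphi=\psi^c$ for an explicit $\psi$ to certify the $c$-class property --- is precisely what the survey leaves implicit.
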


%{\bf Let us first explain formally this statement. Saying that $\Gamma \subset \partial ^c \varphi$ for some $\varphi$ amounts, according to Definition \ref{cheguaio}, to asking that $c(x,y) <+\infty$ on $\Gamma$ and that for every $(x,y)\in \Gamma$, and thus for every $x \in P_X \Gamma$, and $z \in X$ 
%\[c(x,y)-c(z,y) \leq \varphi(x)- \varphi(y).\]
% If such a $\varphi$ exists, also the function $\tilde{\varphi}$ which coincides with $\varphi$ in $p_X \Gamma$ and is equal to $-\infty$ outside, satisfies the same condition. So it is enough to ask that the inequality \ref{eq:system-of-eq-for-potential} is satisfied for $z$ in $p_X\Gamma$ which is given by $(z,w) \in \Gamma$. }

\begin{rema}{\bf  The inequalities in the above theorem can be equivalently indexed by pairs $((x,y),z)\in \Gamma\times p_X\Gamma$, where $w$ is ignored as it does not appear in the inequalities. Moreover, note that the solution vector $(\v(x))_{x\in p_X\Gamma}$ is indexed by $p_X\Gamma$. One may worry that in fact we obtain a vector $(\v(x,y))_{(x,y)\in G}$, which seems multi-valued. However, the authors observe that if $(x,y)$ and $(x,y')$ are both in $\Gamma$ then, writing out the relevant inequalities for these two points, we get that $\v(x,y)=\v(x,y')$. In other words, the solution vector indeed only depends on the first coordinate.} \end{rema}

Intuitively, the above theorem says the following: since for every $(x_i,y_i)\in \Gamma$ letting $\varphi_{i}(x)= c(x,y_i)-a_i$ we have that $\partial ^c \varphi_{i} (x_i) =y_i$ for any $a_i\in \R$, what one needs to take care of is the ``good gluing'' of such functions, i.e. the existence of constants $a_{i}$ such that the function ${\varphi_a (x) := \min_{(x_i,y_i)\in \Gamma} \varphi_{i} (x)}$ satisfies for every $x_i\in p_X \Gamma$ that $\varphi_a(x_i) = \varphi_i (x_i)$. Therefore, finding a potential amounts to showing that there exists a solution $a$ to the family of inequalities
\[c(x_i, y_i) - a_i \le c(x_i, y_j) - a_j.  \] 

The authors then show that given $\{\eta_{i,j}\}_{i,j \in I}\in [-\infty, +\infty)$, where $I$ is some arbitrary index set and with $\eta_{i,i}=0$, the system of inequalities 
	\[\eta_{i,j} \le a_i - a_j, \quad i,j \in I \]	has a solution if and only if for any $i,j\in I$ there exists some constant $M(i,j)$ such that for any $m$ and any $i_2,\cdots,i_{m-1}$, letting $i = i_1$ and $j = i_m$ one has that  $\sum_{k=1}^{m-1} \eta_{i_k, i_{k+1}}\le M(i,j)$. 	 This directly translates into $c$-path-boundedness. 
	
The proof of the solvability of the system of inequalities uses Zorn's lemma. The authors also give a new proof of Theorem \ref{RocRuc}, which can be found in \cite{wyczesany_thesis}. \vspace{2mm}

Having established the existence of a potential one may want to consider its regularity. This is a deep and hugely rich theory, which despite being related, is different to what we consider in this note. Therefore, we will not address these issues. We refer the interested reader to the second book of C. Villani \cite{villani-book}, which offers a useful starting point and many further references. 

\subsubsection{Adding more variables}

Finally, one may consider a \emph{multi-dimensional} version of the same ideas, which to the best of our knowledge, was first introduced by Kim and Pass in \cite{kim2014general}. Let $X_1, \dots, X_N$ be sets and 
\[
c: X_1 \times \dots \times X_N \to \R \cup \{+ \infty\}.
\]
\begin{defi}\label{multicyclic}
We say that $\Gamma \subset X_1 \times \dots \times X_N$ is \emph{$c$-cyclically monotone} if for all $k \in \N$ and $\{(x^i_1,\dots, x^i_N)\}_{i=1}^{k} \subset \Gamma$ and for all permutations $ \sigma_2, \dots, \sigma_N \in {\mathcal S}_k$,  we have
\[
\sum_i c(x^i_1, x^i_2, \dots, x^i_N) \leq  \sum_i c(x^i_1, x^{\sigma_2 (i)}_2\dots, x^{\sigma_N(i)}_N). 
\]
\end{defi}

In correspondence to the $c$-subgradient for two variables, for the multivariable case one considers $c$-splitting sets. 

\begin{defi}\label{cm}
A set $\Gamma \subset X_1\times \ldots \times X_N$ is called \emph{$c$-splitting} if there exist $N$ functions $\varphi_i: X_i\to [-\infty,+\infty)$ such that 
\[\varphi_1 (x_1)+\varphi_2 (x_2)+\ldots +\varphi_N (x_N) \le c(x_1,x_2, \ldots,x_N)
\] holds for all $(x_1,x_2, \ldots ,x_N)\in X_1\times \ldots \times X_N$, and
\[\varphi_1 (x_1)+\varphi_2 (x_2)+\ldots +\varphi_N (x_N) =  c(x_1,x_2, \ldots,x_N)
\] 
holds for all $(x_1,x_2, \ldots ,x_N)\in \Gamma$. We call the functions $(\varphi_1,\ldots , \varphi_N)$ a $(\Gamma,c)$-splitting tuple.
\end{defi}

Similarly to the two variables case, it is not difficult to check that a $c$-splitting set is $c$-cyclically monotone. The reverse implication will be discussed in Section \ref{mot}. 

The existence of the $(\Gamma,c)$-splitting tuple for a given set $\Gamma$ has been investigated in some special cases. In particular, Bartz, Bauschke and Wang \cite[Theorem 2.7]{bartz2018class} showed that if the cost function $c$ can be written in the form
\[ c(x_1,\ldots , x_N) =\sum_{1\le i<j\le N} c_{i,j}(x_i,x_j),
\]
where $c_{i,j}:X_i\times X_j \to \R$, then if for each $1\le i<j \le N$ the projection $\Gamma_{i,j}$ of $\Gamma\subset X_1\times \ldots \times X_N$ onto $X_i\times X_j$ is $c_{i,j}$-cyclically monotone, then $\Gamma$ is $c$-cyclically monotone, and further, there exist a $c$-splitting tuple $(\varphi_1, \ldots , \varphi_N)$ given by 
\[ \varphi_i(x_i) = \sum _{i<j\le N} \psi_{i,j}(x_i) +\sum_{1\le j<i} \psi _{j,i}^{c_{j,i}}(x_i),\] 
where for every $1\le i<j\le N$ we have $\psi_{i,j}:X_i\to [-\infty, +\infty]$ is the $c_{i,j}$-potential for $\Gamma_{i,j}$.

\begin{rema}
This statement can be now generalized to allow $c_{i,j}:X_i\times X_j \to \R\cup\{+\infty\}$ if we additionally assume that $\Gamma_{i,j}$ is $c_{i,j}$-path-bounded since, as we have seen, this guarantees the existence of the functions $\psi_{i,j}$.
\end{rema}  

Clearly, the condition of $c_{i,j}$-cyclically monotone projections is stronger than $c$-cyclic monotonicity on the whole set $\Gamma$, and the authors present the relevant example in \cite{bartz2018class}. However, it turns out that if we additionally assume that $X_1=\ldots=X_N=\R$ are one-dimensional, then the two conditions are equivalent. 

Let us remark that the maximal multimarginal $c$-monotonicity for the ``quadratic'' multimarginal cost (in the spirit of Minty \cite{minty}) was considered in \cite{bartz2021multi}.

\section{Classical optimal transport}

In this section we collect results connecting $c$-cyclic monotonicity and its various extensions to optimality of a transport plan in the Monge-Kantorovich problem \eqref{kantorovich}. Prior to the detailed exploration of the deep relationship between $c$-cyclic monotonicity and the optimality of transport plans, let us recall the classical setup of two marginal transport problems and justify why in this case one can expect a minimiser of the transport problem to exist.

Let $X$ and $Y$ be topological spaces and let $\mu$ and $\nu$ be Borel probability measures on $X$ and $Y,$ respectively. Let
\[ 
c: X \times Y \to \R \cup \{+ \infty\}
\]
be a Borel cost function that may assume the value $+\infty$. We say that $c:X\times Y \to \R\cup \{+\infty\}$ is \emph{essentially bounded} with respect to $\mu$ and $\nu $ if there exist upper semi-continuous functions ${a:X\to (-\infty,+\infty]}$, $a\in L^1(\mu)$
and $b:Y\to (-\infty,+\infty]$, $b\in L^1(\nu)$ such that $c(x, y)\ge a(x) + b(y)$ for all $x\in X,\, y\in Y$. Since we may absorb the functions $a$ and $b$ into the cost, instead of writing that the cost is essentially bounded we will sometimes write that $c: X\times Y \to [0,+\infty]$. One needs this assumption to avoid the situation where there are plans with a total cost $-\infty$ (and hence nothing can be inferred about the optimal plan). 

If $c$ is assumed lower semi-continuous, then the `total cost' functional
\[
C[\gamma]:= \int c(x,y) d \gamma
\] 
is lower semi-continuous with respect to the weak$^*$ convergence of probability measures. This fact, together with Prokhorov's theorem \cite{prokhorov1956convergence} (see Th. 5.1.3 in \cite{ambrosio2005gradient}) that assures compactness of the set of all transport plans $\Pi (\mu,\nu)$, gives the existence of at least one minimiser of the transport problem \eqref{kantorovich}. This is captured in the following theorem which can be found, for example, in \cite{villani2021topics}.

\begin{theo}[{\cite[Theorem 4.1]{villani2021topics}}]\label{thm:villani} 
	Let $X,\, Y$ be two Polish spaces,  $\mu \in \mathcal{P} (X)$ and $\nu \in \mathcal{P} (Y)$. Let  $c:X\times Y \to \R\cup \{+\infty\}$ be a lower semi-continuous cost function which is essentially bounded with respect to $\mu  $ and $\nu$. Then there exists a $c$-optimal plan $\pi \in \Pi(\mu,\nu)$.
\end{theo}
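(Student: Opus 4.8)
The plan is to establish the existence of a $c$-optimal plan through the classical direct method of the calculus of variations: show that the total cost functional $C[\gamma]$ is lower semi-continuous and that the feasible set $\Pi(\mu,\nu)$ is compact (in the weak$^*$ topology), then conclude that the infimum is attained. The two pillars are already sketched in the text preceding the statement, so the proof amounts to assembling them carefully.

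\smallskip

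\textbf{Step 1: Compactness of $\Pi(\mu,\nu)$.} First I would show that $\Pi(\mu,\nu)$ is weak$^*$ compact. Since $X$ and $Y$ are Polish, the single marginals $\{\mu\}$ and $\{\nu\}$ are trivially tight; by Prokhorov's theorem \cite{prokhorov1956convergence} tightness is equivalent to relative weak$^*$ compactness. The key observation is that the tightness of the marginals transfers to tightness of the whole family $\Pi(\mu,\nu)$: given $\varepsilon>0$, choose compact sets $K_X\subset X$ and $K_Y\subset Y$ with $\mu(X\setminus K_X)<\varepsilon/2$ and $\nu(Y\setminus K_Y)<\varepsilon/2$, so that for every $\gamma\in\Pi(\mu,\nu)$ one has $\gamma\big((X\times Y)\setminus(K_X\times K_Y)\big)\le \mu(X\setminus K_X)+\nu(Y\setminus K_Y)<\varepsilon$, with $K_X\times K_Y$ compact in $X\times Y$. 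Hence $\Pi(\mu,\nu)$ is uniformly tight and therefore relatively compact. One then checks that $\Pi(\mu,\nu)$ is weak$^*$ closed (the marginal constraints $p_X(\gamma)=\mu$, $p_Y(\gamma)=\nu$ pass to weak$^*$ limits when tested against bounded continuous functions depending on only one variable), so $\Pi(\mu,\nu)$ is weak$^*$ compact.

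\smallskip

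\textbf{Step 2: Lower semi-continuity of $C[\gamma]$.} Next I would establish that $C[\gamma]=\int c\, d\gamma$ is weak$^*$ lower semi-continuous. The essential boundedness hypothesis lets me absorb the functions $a\in L^1(\mu)$ and $b\in L^1(\nu)$ into the cost: replacing $c$ by $\tilde c(x,y):=c(x,y)-a(x)-b(y)\ge 0$ changes $C[\gamma]$ only by the fixed constant $\int a\, d\mu+\int b\, d\nu$ (finite and independent of $\gamma\in\Pi(\mu,\nu)$), so it suffices to treat a nonnegative lower semi-continuous cost. For such a cost one writes $\tilde c=\sup_n f_n$ as an increasing limit of bounded continuous functions $f_n$; then $\int \tilde c\, d\gamma=\sup_n\int f_n\, d\gamma$ by monotone convergence, and each $\gamma\mapsto\int f_n\, d\gamma$ is weak$^*$ continuous, so $C$ is a supremum of continuous functionals and hence lower semi-continuous.

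\smallskip

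\textbf{Step 3: Conclusion.} Finally, a lower semi-continuous functional on a (nonempty) compact set attains its infimum: taking a minimizing sequence $\gamma_n\in\Pi(\mu,\nu)$, compactness yields a weak$^*$ convergent subsequence $\gamma_{n_k}\to\pi\in\Pi(\mu,\nu)$, and lower semi-continuity gives $C[\pi]\le\liminf_k C[\gamma_{n_k}]=\inf_{\gamma}C[\gamma]$, whence $\pi$ is optimal. \textbf{The main obstacle} I anticipate is the careful handling of essential boundedness in Step 2 — ensuring the integrals $\int a\, d\mu$ and $\int b\, d\nu$ are genuinely finite so that the reduction to a nonnegative cost is legitimate, and confirming that the possible value $+\infty$ of $c$ does not break the monotone approximation argument. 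By contrast, tightness in Step 1 is routine once one notices that the mass outside a product of compacta is controlled by the two marginals alone.
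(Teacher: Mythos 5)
Your proposal is correct and follows exactly the argument the paper itself sketches (citing Villani): weak$^*$ compactness of $\Pi(\mu,\nu)$ via tightness transferred from the marginals and Prokhorov's theorem, plus lower semi-continuity of $\gamma\mapsto\int c\,d\gamma$ obtained by absorbing the $L^1$ functions $a,b$ from the essential boundedness assumption and approximating the resulting nonnegative lower semi-continuous cost by an increasing sequence of bounded continuous functions. The details you supply (the estimate $\gamma\bigl((X\times Y)\setminus(K_X\times K_Y)\bigr)\le\mu(X\setminus K_X)+\nu(Y\setminus K_Y)$, closedness of the marginal constraints, and monotone convergence) are precisely the standard fleshing-out of that sketch, so no gap remains.
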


In the above theorem,  the existence of a plan with finite total cost is not assumed (nor guaranteed) and when no finite cost plan exists,  any plan (say,  $\mu \otimes \nu$) is optimal.  Therefore, in the results to follow, it will be assumed that there exists a plan with finite total cost.  A transport plan will be called \textit{finite} if its total cost is finite. 

It is of interest to understand the structure of an optimal transport plan. It turns out that ``often" a transport plan is optimal if and only if it is concentrated on a $c$-cyclically monotone set. There is a vast array of this type of results, which we outline in this section.

\subsection{Optimality implies $c$-cyclic monotonicity}\label{sec:optimal-ccm}

Analysing the geometric structure of an optimal plan is a problem with a long history. It seems that $c$-cyclic monotonicity first appeared in this context in \cite{smith1992hoeffding} in the following equivalent formulation of the Kantorovich's problem
\begin{equation}\label{var-alea}
\min \{ \int c(\varX, \varY) d\omega\ : \  \varX\ \mbox{and}\  \varY\  \mbox{random variables with}\   Law (\varX)=\mu, \ Law (\varY)=\nu\}.
\end{equation}
The equivalence between problems \eqref{var-alea} and  \eqref{kantorovich} can be seen in the following way:
If $\varX$ and $\varY$ are admissible for \eqref{var-alea} then the joint law $\gamma= \varX\otimes \varY _\sharp \omega$ is a transport plan in $\Pi(\mu,\nu)$ and 
\[
 \int c(x,y) d \gamma=\int c(\varX, \varY) d\omega.
\]
On the other hand if $\gamma \in \Pi (\mu,\nu)$ then one can choose $(\Omega, \omega)= (X \times Y, \gamma)$ and 
$\varX= p_X$, $\varY= p_Y$.
In that context, which was partly motivated by some models appearing in financial mathematics, authors started by characterizing optimal random variables using $c$-cyclic monotonicity.
In such a model, $\varX$ and $\varY$ could represent the prices at a given moment of two assets or resources. In a robust financial framework the joint probability law of the portfolio $(\varX, \varY)$ cannot be determined from market information, however, the distribution of each price is known. 
The cost $c$ could be interpreted as an option whose pay-off is determined by the vector of prices $(\varX,\varY)$ whose distribution is then of interest (maximizing or minimizing is, clearly, the same).

 The classical structure of cyclic monotonicity of optimal plans was mentioned as a possible alternative tool in Brenier \cite{Brenier} and explicitly exploited in Caffarelli \cite{caffarelli1992regularity}. After that, Gangbo and McCann \cite{gangbo-mccann}	extended the result to lower semi-continuous cost functions bounded from below. They showed that every finite optimal plan with respect to such a cost lies on a $c$-cyclically monotone set.

For more general settings there are, essentially, two arguments to prove that the support of the optimal plan must be $c$-cyclically monotone. The first one uses duality and appeared in \cite{smith1992hoeffding}, while the second relies on modifying a transport plan that is not $c$-cyclically monotone and showing that its cost can be improved. The latter technique was introduced in \cite{abdellaoui1994distance} and used, for example, in Proposition 2.24 of Villani\footnote{At the time when the book was written the question of sufficiency of $c$-cyclic monotonicity was wide open. In fact, Open Problem 2.25 of the same book (later solved by Schechermeyer and Teichmann in \cite{SchachermayerTeichmann}) asks whether sufficiency holds for the quadratic cost.} \cite{villani2021topics} for the quadratic cost $\|x-y\|^2$  or in  Theorem 2.3 of \cite{gangbo-mccann} for a continuous, positive cost.

To the best of our knowledge, the most general result was proved by Beiglböck, Goldstern, Maresch, and Schachermayer \cite{Schachermayer-optimal-and-better} who removed regularity assumptions on the cost: \begin{theo}[{\cite[Theorem 1.a]{Schachermayer-optimal-and-better}}]\label{thm: optimal better 1A}
Let $X, \,Y$ be Polish spaces equipped with Borel probability measures $\mu, \nu$ and let $c : X \times Y \to [0, \infty]$ be a Borel measurable cost function. Then every optimal transport plan with finite total cost is $c$-cyclically monotone.
\end{theo}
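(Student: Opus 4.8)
The plan is to argue by contradiction: if an optimal plan $\pi$ of finite total cost is not concentrated on any $c$-cyclically monotone set, I will rearrange a small amount of its mass to produce a strictly cheaper competitor in $\Pi(\mu,\nu)$. The essential difficulty compared with the classical lower-semicontinuous case is that, with $c$ merely Borel, one cannot localise the rearrangement in small neighbourhoods — nearby points carry no information about nearby costs — so the swap must be performed at the level of the product measure $\pi^{\otimes k}$ rather than pointwise.

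Concretely, write $Z=X\times Y$ and, for each $k\in\N$, set
\[
B_k = \Big\{(z_1,\dots,z_k)\in Z^k : \exists\,\sigma\in\mathcal S_k,\ \textstyle\sum_i c(x_i,y_{\sigma(i)}) < \sum_i c(x_i,y_i)\Big\},
\]
intersected with the $\pi^{\otimes k}$-full set on which all $c(x_i,y_i)$ are finite (so the inequality is unambiguous); each $B_k$ is Borel. The strategy has two components: first, that $\pi$ is concentrated on a $c$-cyclically monotone Borel set provided $\pi^{\otimes k}(B_k)=0$ for every $k$; and second, that optimality forces $\pi^{\otimes k}(B_k)=0$ for all $k$.

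For the second (and central) component, suppose $\pi^{\otimes k}(B_k)>0$ for some $k$. Decomposing $B_k$ over the finitely many $\sigma\in\mathcal S_k$ and over the thresholds $\{\sum_i c(x_i,y_{\sigma(i)})\le \sum_i c(x_i,y_i)-1/n\}$, I fix a single permutation $\sigma$, a gain $\delta>0$, and a Borel set $B$ with $\alpha:=\pi^{\otimes k}(B)>0$ on which applying $\sigma$ saves at least $\delta$. Let $Q=\alpha^{-1}\pi^{\otimes k}|_{B}$, and for $1\le i\le k$ let $\rho_i$ be the law of $z_i=(x_i,y_i)$ under $Q$ and $\tilde\rho_i$ the law of the rewired pair $(x_i,y_{\sigma(i)})$ under $Q$. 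Define
\[
\tilde\pi = \pi - \epsilon\sum_{i=1}^k \rho_i + \epsilon\sum_{i=1}^k \tilde\rho_i .
\]
Since the $i$-th coordinate projection of $\pi^{\otimes k}|_{B}$ is dominated by $\pi$, one has $\rho_i\le\alpha^{-1}\pi$, so $\tilde\pi\ge0$ once $\epsilon<\alpha/k$. The $X$-marginals of $\rho_i$ and $\tilde\rho_i$ coincide (both are the law of $x_i$), while summing the $Y$-marginals and reindexing by the bijection $\sigma$ gives $\sum_i(p_Y)_\#\rho_i=\sum_i(p_Y)_\#\tilde\rho_i$; hence $\tilde\pi\in\Pi(\mu,\nu)$. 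All relevant $c$-integrals are finite because $\pi$ is finite and $c\ge0$, and
\[
C[\tilde\pi]-C[\pi] = \epsilon\int\Big(\sum_i c(x_i,y_{\sigma(i)})-\sum_i c(x_i,y_i)\Big)\,dQ \le -\epsilon\delta < 0,
\]
contradicting optimality. Thus $\pi^{\otimes k}(B_k)=0$ for every $k$.

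The step I expect to be the main obstacle is the first component: passing from ``$\pi^{\otimes k}(B_k)=0$ for every $k$'' to a \emph{single} Borel set $\Gamma$ with $\pi(\Gamma)=1$ and $\Gamma^k\cap B_k=\emptyset$ for all $k$ simultaneously, i.e. a genuinely $c$-cyclically monotone carrier. This cannot be obtained by a naive Fubini/product argument, since a full-measure product $\Gamma^k$ always meets, for instance, diagonal null sets; it must instead exploit the permutation invariance of the $B_k$ and the fact that degenerate tuples with repeated points cannot be strictly improved. Carrying this out carefully — via a countable exhaustion together with a measurable-selection construction — is the technical heart of the Borel case and is precisely where the argument departs most sharply from the classical continuous treatment.
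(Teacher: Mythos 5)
Your second component is sound: the measure-level rewiring is a correct replacement for the neighbourhood localisation that fails for Borel costs. Positivity of $\tilde\pi$ for $\epsilon<\alpha/k$, the marginal computation via reindexing by $\sigma$, and even the integrability point all check out (on $B$ the rewired sum is dominated pointwise by the diagonal sum, which is $Q$-integrable because $C[\pi]<\infty$), so optimality does force $\pi^{\otimes k}(B_k)=0$ for every $k$. The fatal problem is the first component, which you defer as ``the technical heart'': it is not merely hard, it is \emph{false} as the plan-independent measure-theoretic lemma you envisage, and permutation invariance plus the exclusion of tuples with repeated entries cannot rescue it. Take $X=Y=[0,1]$, $\mu=\nu=\lambda$, $\alpha$ irrational, $c\equiv 1$ except $c(x,x\oplus\alpha)=0$ (addition mod $1$), and $\pi=(id,id)_\sharp\lambda$, which has finite cost. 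Any strict improvement must use a rewired pair of the form $(x,x\oplus\alpha)$, so $B_k\subset\{\exists\, i\ne j:\ x_j=x_i\oplus\alpha\}$, which is $\pi^{\otimes k}$-null for every $k$; yet any $\Gamma$ with $\pi(\Gamma)=1$ projects onto a set $E$ with $\lambda(E)=1$, and since $\lambda(E\cap(E\ominus\alpha))=1$ the set $\Gamma$ contains an improvable pair of \emph{distinct} points $\big((x,x),(x\oplus\alpha,x\oplus\alpha)\big)$. So no ($\lambda$-measurable, even after completion) $c$-cyclically monotone carrier exists although all $B_k$ are null. This $\pi$ is of course not optimal (the shift plan costs $0$), and that is precisely the point: ``$\pi^{\otimes k}(B_k)=0$ for all $k$'' does not retain enough of optimality, so your architecture --- use optimality once to kill the $B_k$, then upgrade by pure measure theory via countable exhaustion and measurable selection --- cannot be completed; optimality must re-enter the second half.

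This is also where your route departs from the cited source: the survey gives no proof and points to Beiglb\"ock--Goldstern--Maresch--Schachermayer, and there part (a) is proved by \emph{duality}, not rearrangement (the survey's Section 3.1 already hints that at Borel generality the duality argument is the one that survives). Kellerer's duality theorem holds for Borel costs $c:X\times Y\to[0,\infty]$, giving $\varphi_n\in L^1(\mu)$, $\psi_n\in L^1(\nu)$ with $\varphi_n\oplus\psi_n\le c$ and $\int (c-\varphi_n\oplus\psi_n)\,d\pi\to 0$; after passing to a subsequence converging $\pi$-a.e.\ one sets $\Gamma=\{(x,y):\,c(x,y)<\infty,\ (c-\varphi_n\oplus\psi_n)(x,y)\to 0\}$, so $\pi(\Gamma)=1$, and for any $(x_i,y_i)_{i=1}^k\subset\Gamma$ and $\sigma\in\mathcal S_k$,
\begin{equation*}
\sum_{i=1}^k c(x_i,y_{\sigma(i)})\ \ge\ \sum_{i=1}^k\big(\varphi_n(x_i)+\psi_n(y_{\sigma(i)})\big)\ =\ \sum_{i=1}^k\big(\varphi_n(x_i)+\psi_n(y_i)\big)\ \longrightarrow\ \sum_{i=1}^k c(x_i,y_i),
\end{equation*}
so $\Gamma$ is $c$-cyclically monotone outright, with no avoidance lemma needed. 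Your component two is still a true and useful statement (it is how competitor constructions run in the multimarginal and martingale settings), but as written the proposal does not prove the theorem, and the missing bridge cannot be built in the form you describe.
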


The reverse implication ($c$-cyclic monotonicity implies optimality) is not true in general, and we collect known results and counterexamples in the rest of this section.

\subsection{When $c$-cyclic monotonicity implies optimality?}

We begin by presenting an example from \cite{ambrosio-pratelli}, which shows that one cannot always expect $c$-cyclic monotonicity of the support of the plan to imply optimality. In this example the cost is regular, namely lower semi-continuous. However, it assumes not only real values but also the value $+\infty$. %In this case $c$-cyclic monotonicity of the support of a plan does not imply optimality of the plan. 

\begin{exam}[{\cite[Example 3.1]{ambrosio-pratelli}}]\label{esempioovunque}
	Let $X=Y=[0,1]$ and let $\mu=\nu=\lambda$ be the Lebesgue measure. Letting $\alpha \in [0,1)$ be an irrational number define
	\[ \Gamma=\{(x,x):x\in X\}, \ \ \ \ \ \Gamma_\alpha =\{ (x,x\oplus \alpha): x\in X\},
	\]
	where $\oplus$ is the addition modulo $1$. Define the cost $c=a\in [0,\infty)$ on $\Gamma$, $c=b\in [0,\infty)$ on $\Gamma_\alpha$ and  $+\infty$ otherwise. 
	Now both sets $\Gamma$ and $\Gamma_\alpha$ are $c$-cyclically monotone. Define maps $T:X\to X\times Y$ to be $T(x)=(x,x)$
	and $T_\alpha :X\to X\times Y$ to be $T_\alpha(x)=(x,x\oplus \alpha)$. These maps induce transport plans $\pi=T_{\#} \lambda$ and $\pi_\alpha=T_{\alpha \,\#}\lambda$ which are supported on $\Gamma$ and $\Gamma_\alpha$,  respectively. Both of these plans have finite total cost, equal to  $a$ and $b$ respectively, which means that the optimality depends on the choice of values $a,b$, while, as already mentioned, both supports are $c$-cyclically monotone. 
\end{exam}

This example shows that one will have to assume something about the cost or the transported measures. It turns out that a joint property of measures and the cost is needed, which was shown by Beiglböck, Goldstern, Maresch, and Schachermayer in \cite{Schachermayer-optimal-and-better}.

\begin{theo}[{\cite[Theorem 1.b]{Schachermayer-optimal-and-better}}]\label{thm:closedsetnullset}
Let $X,\, Y$ be Polish spaces equipped with Borel probability measures $\mu, \, \nu$ and $c:X\times Y\to [0,\infty]$ a Borel measurable cost function. Assume that the set $\{(x,y)\in X\times Y:\, c(x,y)= +\infty\}$ is the union of a closed set $F$ and a $\mu\otimes\nu$-null set $N$. Let $\gamma\in\Pi(\mu,\nu)$ be a finite and $c$-cyclically monotone plan. Then $\gamma$ is optimal.
\end{theo}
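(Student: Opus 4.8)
Suppose, for contradiction, that $\gamma$ is finite and $c$-cyclically monotone but \emph{not} optimal. The plan is to manufacture from a cheaper competitor a finite cycle inside a carrier set of $\gamma$ that violates the defining inequality of $c$-cyclic monotonicity, every cost occurring in the cycle being finite. First I fix a Borel $c$-cyclically monotone set $\Gamma$ with $\gamma(\Gamma)=1$ and $\Gamma\subset\{c<\infty\}$, which is possible since $\gamma$ is finite. Non-optimality yields a finite plan $\gamma'\in\Pi(\mu,\nu)$ with $C[\gamma']<C[\gamma]$; being finite, $\gamma'$ also avoids $\{c=\infty\}=F\cup N$. The whole point of the hypotheses will be to let me ``close up'' chains built alternately from $\gamma$ and $\gamma'$ while keeping every cost that appears finite and controlled.

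The central construction is a stationary alternating Markov chain. Disintegrate $\gamma(dx\,dy)=\mu(dx)\,K(x,dy)$ and $\gamma'(dx\,dy)=\nu(dy)\,K'(y,dx)$, start from $x_1\sim\mu$, and alternately draw $y_i$ from $K(x_i,\cdot)$ and $x_{i+1}$ from $K'(y_i,\cdot)$. By construction every pair $(x_i,y_i)$ is distributed as $\gamma$ (hence lies in $\Gamma$ almost surely), while every ``bridge'' $(x_{i+1},y_i)$ is distributed as $\gamma'$, so in particular all bridge costs are finite. For almost every realization $\{(x_i,y_i)\}_{i=1}^k\subset\Gamma$, so the cyclic-shift instance of $c$-cyclic monotonicity gives, pointwise,
\[\sum_{i=1}^k c(x_i,y_i)\le \sum_{i=1}^k c(x_i,y_{i-1}),\qquad y_0:=y_k.\]
Taking expectations and using stationarity, the left-hand side equals $kC[\gamma]$ and all but one term on the right equal $C[\gamma']$, whence $kC[\gamma]\le (k-1)C[\gamma']+\mathbb{E}\,c(x_1,y_k)$. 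If the single ``closing'' term can be kept finite and $o(k)$, dividing by $k$ and letting $k\to\infty$ gives $C[\gamma]\le C[\gamma']$, the desired contradiction.

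Everything therefore hinges on the closing pair, and this is the main obstacle—exactly where the structure of $\{c=\infty\}$ is used. To neutralise the null part $N$ I close \emph{independently}: I run two independent copies of the chain and splice them into one cycle of length $2k$ whose two cross-closing bridges are $(x_1^A,y_k^B)$ and $(x_1^B,y_k^A)$. By independence each cross-closing pair has law $\mu\otimes\nu$, and since $\mu\otimes\nu(N)=0$ it misses $N$ almost surely; thus a cross-closing has infinite cost only if it falls in the closed set $F$. To handle $F$ I use that $F^c$ is open: by inner regularity I pick a compact set $L\subset\{c<\infty\}$ with $\mu\otimes\nu(L)>0$ on which $c$ is bounded (a finite-valued Borel function is bounded on a large compact subset), and I condition the cross-closings to lie in $L$. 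Then both closing costs are uniformly bounded, so after dividing the averaged cyclic inequality by $2k$ they vanish in the limit and $C[\gamma]\le C[\gamma']$ follows.

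I expect the genuinely delicate points to be the conditioning and selection just invoked: restricting the cross-closings to $L$ distorts the laws of the remaining pairs, and one must check this distortion is controlled and disappears as $L$ exhausts $\{c<\infty\}$ and $k\to\infty$; the sharpest difficulty is the degenerate case $\mu\otimes\nu(\{c<\infty\})=0$ (a finite plan singular with respect to $\mu\otimes\nu$), where independent closing is impossible and one must instead exploit the openness of $F^c$ more locally, keeping the chain recurrent in a region that admits finite-cost closings. By contrast the algebraic core is effortless: along each spliced cycle the rearranged cost is exactly the $\gamma'$-cost on the bridges, so the inequality we need is nothing but $c$-cyclic monotonicity evaluated on carrier points of $\gamma$. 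It is worth stressing that both hypotheses are genuinely necessary—Example~\ref{esempioovunque} exhibits a finite, $c$-cyclically monotone, non-optimal plan with no such structure—and that this result is precisely the converse of Theorem~\ref{thm: optimal better 1A}, so together they yield the equivalence of optimality and $c$-cyclic monotonicity under these assumptions.
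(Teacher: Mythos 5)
Your splicing construction is sound as far as it goes: by stationarity each $(x_i,y_i)$ has law $\gamma$ (hence lies in a carrier $\Gamma$ a.s.), each bridge has law $\gamma'$, the cross-closings of the two independent chains have law $\mu\otimes\nu$ and so avoid $N$ almost surely, and the pointwise cyclic inequality on the $2k$ points is a legitimate instance of $c$-cyclic monotonicity. But the argument genuinely breaks at exactly the step you flag as delicate, and the flag is not a fix. Conditioning the spliced cycle on the event $E$ that both cross-closings fall in your compact set $L$ distorts the law of \emph{every} pair in both chains, because nothing in the hypotheses provides any mixing for the alternating chain; the distortion does not localize near the endpoints, so it is of order one per term rather than $o(1)$ as $k\to\infty$, and the identities $\mathbb{E}\sum_i c(x_i,y_i)=2kC[\gamma]$ and $\mathbb{E}\,c(x_{i+1},y_i)=C[\gamma']$ are lost. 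If instead you avoid conditioning by multiplying the pointwise inequality by $\mathbf{1}_E$ and taking expectations, the right-hand side is controlled ($\le C[\gamma']$ per bridge and $\le \sup_L c$ per closing), but the left-hand side loses its lower bound: $\mathbb{E}[\mathbf{1}_E\, c(x_i,y_i)]=C[\gamma]-\mathbb{E}[\mathbf{1}_{E^c}\,c(x_i,y_i)]$, and since $P(E)\le \mu\otimes\nu(L)\le \mu\otimes\nu(\{c<\infty\})$, which may be small ($\gamma$ is typically singular with respect to $\mu\otimes\nu$), the subtracted term can swallow the entire sum; letting $L$ exhaust $\{c<\infty\}$ does not help. (One of your stated worries is actually vacuous: if a finite plan exists, then $\gamma(F^c)=1$ with $F^c$ open forces $F^c$ to meet $\mathrm{supp}\,\mu\times\mathrm{supp}\,\nu$ in a nonempty open set, so $\mu\otimes\nu(\{c<\infty\})>0$ automatically. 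But this does not repair the conditioning step.)

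For comparison, the paper uses the closedness of $F$ in an entirely different, non-probabilistic way (see the discussion following Theorem \ref{Optimal-if-connecting}): one finds a carrier $\Gamma$ of full $\gamma$-measure together with a \emph{countable partition} $\{\Gamma\cap(C_i\times D_i)\}_{i\in I}$ whose pieces are $c$-connecting --- possible precisely because $F^c$ is open --- and then applies the potential-based Theorem \ref{Optimal-if-connecting} (the Rockafellar--Rochet--R\"uschendorf formula \eqref{rock}, measurable modification and truncation as in the proof of Theorem \ref{cFinite}) on the pieces. The step that replaces your closing-term estimate is the key observation that \emph{every} finite competitor $\tilde\gamma$ also satisfies $\tilde\gamma\left(\bigcup_{i\in I}C_i\times D_i\right)=1$, so the comparison of costs can be carried out piecewise. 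If you wish to salvage the Markov-chain heuristic, what you would need is a lemma asserting that the chain can be closed at bounded cost with probability bounded below uniformly over the chain's interior state --- which is, in effect, the $c$-connectivity supplied by the openness of $F^c$; at that point you have reconstructed the paper's mechanism and the probabilistic layer is dispensable.
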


This result is the state of the art, establishing when the equivalence of optimality and $c$-cyclic monotonicity holds. Nevertheless, in what follows, we are going to present some prior developments of this problem as we find it interesting and instructive. Moreover, it will give the reader the idea behind the proof of the above theorem.

\subsubsection{Finitely-supported measures} 
%We start from the finite case; here the relationship between $c$-cyclic monotonicity and optimality is very direct. 
For finite spaces $X$ and $Y$ or, equivalently, for finitely-supported measures $\mu$ and $\nu$ one can prove that 
\begin{theo} \label{finitesets} Assume that $\mu$ and $\nu$ are finitely-supported Borel probability measures on Polish spaces $X$ and $Y$, respectively.  A transport plan $\gamma\in\Pi(\mu,\nu)$ is optimal with respect to the optimal transport problem corresponding to a cost function $c:X\times Y\to \R$ if and only if it is $c$-cyclically monotone. 
\end{theo}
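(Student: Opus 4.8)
The plan is to prove the two implications separately, interpreting ``$\gamma$ is $c$-cyclically monotone'' as the statement that $\Gamma := {\rm supp}(\gamma)$ is $c$-cyclically monotone; since $\mu$ and $\nu$ are finitely supported, $\Gamma$ is a finite set of atoms. For the implication ``$c$-cyclic monotonicity $\Rightarrow$ optimality'' I would exploit that $c$ is real-valued and invoke Theorem \ref{RocRuc}. Applying it to $\Gamma$ yields a $c$-class potential $\varphi: X \to \R$ with $\Gamma \subset \partial^c\varphi$. Setting $\psi := \varphi^c$, the definition of the $c$-transform gives $\varphi(x) + \psi(y) \le c(x,y)$ for all $(x,y) \in X\times Y$, while the inclusion $\Gamma \subset \partial^c \varphi$ upgrades this to the equality $\varphi(x) + \psi(y) = c(x,y)$ on $\Gamma$ (this is exactly the identification of $\partial^c$ with $\tilde\partial^c$ recorded after Definition \ref{cheguaio}, which is legitimate here because $\varphi$ and $c$ are real-valued). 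All relevant quantities are finite because $\mu$ and $\nu$ are supported on finitely many points, at each of which $\varphi$ and $\psi$ take real values. Then for any competitor $\gamma' \in \Pi(\mu,\nu)$, integrating the inequality and using that $\gamma'$ has marginals $\mu,\nu$ gives
\[
\int c\, d\gamma' \ge \int \varphi\, d\mu + \int \psi\, d\nu = \int (\varphi(x)+\psi(y))\, d\gamma = \int c\, d\gamma,
\]
where the final equality uses that $\gamma$ is concentrated on $\Gamma$. Hence $\gamma$ is optimal. This is the ``duality argument'' alluded to in Section \ref{sec:optimal-ccm}.

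For the converse, ``optimality $\Rightarrow$ $c$-cyclic monotonicity'', I would argue by contraposition using a mass-reshuffling perturbation. Suppose $\Gamma$ is not $c$-cyclically monotone: there exist $\{(x_i,y_i)\}_{i=1}^k \subset \Gamma$ and $\sigma \in {\mathcal S}_k$ with $\sum_{i=1}^k c(x_i, y_{\sigma(i)}) < \sum_{i=1}^k c(x_i, y_i)$. Each atom carries positive mass $m_i := \gamma(\{(x_i,y_i)\}) > 0$; set $\varepsilon := \min_i m_i > 0$ and define
\[
\gamma' := \gamma + \varepsilon \sum_{i=1}^k \big( \delta_{(x_i, y_{\sigma(i)})} - \delta_{(x_i, y_i)} \big).
\]
Since $\sigma$ is a permutation, $\{y_{\sigma(i)}\} = \{y_i\}$ as multisets, so the perturbation cancels on both marginals and $\gamma' \in \Pi(\mu,\nu)$, while the choice of $\varepsilon$ keeps $\gamma' \ge 0$. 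Finally $\int c\, d\gamma' - \int c\, d\gamma = \varepsilon\big(\sum_i c(x_i,y_{\sigma(i)}) - \sum_i c(x_i,y_i)\big) < 0$, contradicting optimality of $\gamma$.

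The main obstacle I anticipate is the bookkeeping in the reshuffling step rather than any conceptual difficulty: one must ensure that $\gamma'$ is a genuine nonnegative measure with the correct marginals even when the listed points $(x_i,y_i)$ are not distinct (so that the same atom may be decremented more than once), which is handled by reducing $\sigma$ to a single cycle of distinct points or by shrinking $\varepsilon$ to account for multiplicities. Everything else is forced by the finiteness of the supports, which removes the integrability and $\pm\infty$ subtleties that make the general (Polish spaces, infinite-valued cost) theory delicate and is precisely what lets us apply Theorem \ref{RocRuc} verbatim.
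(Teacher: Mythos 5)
Your proposal is correct, and the necessity direction (optimality implies $c$-cyclic monotonicity) is essentially the paper's own argument: the paper's Theorem \ref{CmIffOptimal} in Appendix \ref{appendix:finitely} performs exactly your mass-reshuffling perturbation, subtracting the minimal atom mass along the violating tuple and re-adding it at the permuted points; your explicit handling of repeated atoms (shrinking $\varepsilon$ by multiplicities or reducing to a cycle of distinct points) is in fact a point the paper's write-up glosses over, since its formula tacitly treats the $k$ selected index vectors as distinct. Where you genuinely diverge is the sufficiency direction. You invoke Theorem \ref{RocRuc} to produce a potential $\varphi$, set $\psi=\varphi^c$, and run the standard duality chain $\int c\,d\gamma' \ge \int\varphi\,d\mu+\int\psi\,d\nu = \int c\,d\gamma$ --- this is the linear-programming/duality folklore route the paper mentions but deliberately does not follow, and it is sound here: finiteness of the supports makes $\varphi$ and $\psi$ real-valued on ${\rm supp}(\mu)=p_X(\Gamma)$ and ${\rm supp}(\nu)=p_Y(\Gamma)$ and turns all integrals into finite sums, so the measurability and truncation machinery of Theorem \ref{cFinite} is unnecessary (one pedantic caveat: the potential of Theorem \ref{RocRuc} may take the value $-\infty$ off $p_X(\Gamma)$, but since every competitor $\gamma'$ is concentrated on ${\rm supp}(\mu)\times{\rm supp}(\nu)$ this never enters the integrals). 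The paper instead proves sufficiency with no duality at all, via Proposition \ref{icmfinitelyoptimal} and the combinatorial Lemma \ref{permutations}: any two finitely supported plans with the same marginals and integer coefficients are related by coordinate permutations of a multiplicity table, so $c$-cyclic monotonicity compares the total costs directly, and real coefficients are reached by a rational-approximation step (density of the rational kernel of an integer matrix in its real kernel). What that buys over your route: it needs neither potentials nor real-valuedness of the cost, and it applies verbatim to the multi-marginal and $L^\infty$ problems (the proposition treats $C$ and $C_\infty$ simultaneously), settings where duality is unavailable; what your route buys is brevity and a direct link to the two-marginal potential theory already developed in Section \ref{sec:cyclic_mon}.
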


The above theorem can be proved in several different ways. The first and, probably, more popular one, uses linear programming but, despite being part of the folklore, we cannot find a complete reference with proof. Several results for finitely supported measures are contained in \cite{peyre2019computational}. In the quadratic case, this result is left as an exercise (Exercise 2.21) in \cite{villani2021topics}. Further, we remark that it is a particular case of Theorem \ref{pratelliatomic} below. 
In Appendix \ref{appendix:finitely} we give another proof that uses a peculiar structure of  finitely-supported transport plans with the same marginals. More precisely, given two finitely-supported transport plans with the same marginals, it is possible to obtain one of them by permuting the coordinates of points in the support of the other.

Using Theorem \ref{finitesets} and the density of convex combinations of discrete measures in the space of probability  measures one can prove that there exists an optimal transport plan with $c$-cyclically monotone support for general marginal measures $\mu\in\mathcal{P}(X), \ \nu\in\mathcal{P}(Y)$ (see \cite{mccann1995existence} and Remark 2.26 in \cite{villani2021topics}).

\subsubsection{Purely atomic measures}
In \cite{pratelli2008sufficiency}, Pratelli shows that if the measures $\mu$ and $\nu$ are purely atomic (that is, concentrated on at most countable sets) then optimality and $c$-cyclic monotonicity are indeed equivalent.

\begin{theo}[{\cite[Theorem A]{pratelli2008sufficiency}}]\label{pratelliatomic}
Let $X$ and $Y$ be Polish spaces, $\mu\in\mathcal{P}(X)$ and $\nu\in\mathcal{P}(Y)$  purely atomic, and $c:X\times Y\to[0,\infty]$ a cost function. Then a finite transport plan $\gamma\in\Pi(\mu,\nu)$ is optimal if and only if it is concentrated on a $c$-cyclically monotone set. 
\end{theo}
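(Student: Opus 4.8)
The plan is to prove both implications, but the forward direction — optimality implies $c$-cyclic monotonicity — is already covered by Theorem \ref{thm: optimal better 1A}, which applies here since $\mu,\nu$ are in particular Borel probability measures on Polish spaces and $c\ge 0$. So the entire content lies in the converse: a finite, $c$-cyclically monotone plan $\gamma$ is optimal. I would fix an arbitrary competitor $\tilde\gamma\in\Pi(\mu,\nu)$, which may be assumed finite (otherwise $C[\tilde\gamma]=+\infty\ge C[\gamma]$ trivially), and aim to show $C[\gamma]\le C[\tilde\gamma]$. First I would pass to discrete notation: since $\mu,\nu$ are purely atomic, write $\mu=\sum_i a_i\delta_{x_i}$ and $\nu=\sum_j b_j\delta_{y_j}$ over at most countable index sets, and encode each plan by a nonnegative ``matrix'', $\gamma\leftrightarrow(p_{ij})$ and $\tilde\gamma\leftrightarrow(q_{ij})$, with row sums $a_i$ and column sums $b_j$. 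Then $\Gamma:=\mathrm{supp}(\gamma)=\{(x_i,y_j):p_{ij}>0\}$ and $C[\gamma]=\sum_{ij}p_{ij}\,c(x_i,y_j)$, similarly for $\tilde\gamma$, with both sums finite.

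The engine of the proof is a cycle-swapping argument, which is exactly what proves the finite case (Theorem \ref{finitesets}) and which I would first isolate as a lemma. Consider the circulation $f_{ij}:=p_{ij}-q_{ij}$; since the two plans share their marginals, $\sum_j f_{ij}=0$ for every $i$ and $\sum_i f_{ij}=0$ for every $j$. Interpreting $f$ as a flow on the complete bipartite graph on $\{x_i\}\sqcup\{y_j\}$, it decomposes into elementary alternating cycles $(x_{i_1},y_{j_1},x_{i_2},y_{j_2},\dots,x_{i_k},y_{j_k})$ in which the edges $(i_l,j_l)$ are ``$\gamma$-heavy'' ($p>q$, hence $(x_{i_l},y_{j_l})\in\Gamma$) and the edges $(i_{l+1},j_l)$ are ``$\tilde\gamma$-heavy''. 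Shifting mass along such a cycle transforms $\gamma$ toward $\tilde\gamma$ and changes the cost by a positive multiple of $\sum_l c(x_{i_{l+1}},y_{j_l})-\sum_l c(x_{i_l},y_{j_l})$. Reindexing, the first sum equals $\sum_l c(x_{i_l},y_{j_{l-1}})$, so applying $c$-cyclic monotonicity to the tuple $\{(x_{i_l},y_{j_l})\}_l\subset\Gamma$ with the cyclic shift $\sigma(l)=l-1$ shows this change is $\ge 0$. Thus every cycle swap can only increase the cost, and in the finite case, processing all (finitely many) cycles carries $\gamma$ into $\tilde\gamma$, yielding $C[\gamma]\le C[\tilde\gamma]$.

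For countably many atoms the same mechanism is what we want, but the circulation $f$ may decompose into infinitely many cycles, and — worse — into cycles of infinite length, to which $c$-cyclic monotonicity (a statement about finite tuples) cannot be applied directly. This is the main obstacle. I would handle it by truncation: using $c\ge 0$ and $C[\gamma],C[\tilde\gamma]<\infty$, for each $\varepsilon>0$ there is a finite set of edges carrying all but $\varepsilon$ of the mass and all but $\varepsilon$ of the cost of both plans. I would restrict the two plans to this finite block and then \emph{complete} the resulting sub-measures into honest finite transport plans sharing common finite marginals, taking care that (i) the completion of $\gamma$ places mass only on pairs already in $\Gamma$, so that it remains $c$-cyclically monotone — recall any subset of a $c$-cyclically monotone set is again $c$-cyclically monotone — and (ii) the total cost added in completing is $O(\varepsilon)$. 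Applying the finite lemma to the completed plans gives the desired inequality up to an $O(\varepsilon)$ error, and letting $\varepsilon\to 0$ yields $C[\gamma]\le C[\tilde\gamma]$.

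The delicate point — and where Pratelli's work concentrates — is precisely this reconciliation step: matching the leftover marginals of the two truncations while simultaneously keeping the $\gamma$-side supported on $\Gamma$ and paying only $O(\varepsilon)$ in extra cost. Equivalently, it amounts to showing that the infinite-length cycles appearing in the decomposition of $f$ contribute no net decrease of cost in the limit, which is the heart of the matter and the step I expect to require the most care; the rest is bookkeeping built on the finite cycle-swapping lemma and the elementary fact that subsets of $c$-cyclically monotone sets inherit the property.
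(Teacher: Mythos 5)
Your forward direction (via Theorem \ref{thm: optimal better 1A}) and your finite cycle-swapping engine are both sound, and they match how the survey treats the finite case (Theorem \ref{finitesets} and Lemma \ref{permutations}). But the converse for countably many atoms --- which is the entire content of the theorem --- is not proved: you reduce it to the truncate-and-complete step and then simply assert that step, labelling it ``bookkeeping'' plus ``the step I expect to require the most care.'' Two claims there are genuinely unjustified. First, that the truncated $\gamma$-block can be completed, with support still inside $\Gamma$, so as to match prescribed marginals: this is a nontrivial matching problem (a Hall-type condition on $\Gamma$), and nothing in $c$-cyclic monotonicity guarantees it. Second, and fatally, that the competitor-side completion costs only $O(\varepsilon)$: small mass does not imply small cost. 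The pairs needed to re-match the mismatched leftover marginals belong to \emph{neither} plan, and since $c$ is only Borel and $[0,\infty]$-valued, those pairs may all carry arbitrarily large or infinite cost, so no finite-cost completion need exist at all. That this is not a removable technicality is certified by Example \ref{esempioovunque}: there a $c$-cyclically monotone plan fails to be optimal precisely because any rearrangement toward the competitor is forced through infinite-cost pairs. Any correct proof must therefore use atomicity inside the reconciliation step in an essential, quantitative way, and your sketch uses it only to write sums and to capture $1-\varepsilon$ of the mass with finitely many atoms; it never identifies the mechanism. Equivalently, your reformulation ``infinite-length cycles contribute no net decrease in the limit'' is not a proof strategy but a restatement of the theorem.

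For the record, the survey itself does not prove this statement --- it only cites \cite{pratelli2008sufficiency} --- so there is no in-paper proof to compare against; but Pratelli's published argument runs in the opposite direction from yours: assuming a competitor strictly better by some gap $\delta>0$, he uses the countability of the atoms, the finiteness of both total costs, and the strict gap to extract a \emph{finite} family of points of ${\rm supp}(\gamma)$ and a permutation that strictly lowers the cost, contradicting $c$-cyclic monotonicity. The strict improvement $\delta$ is what cuts the potentially infinite chains down to finite cycles; it is not controlled by an a priori $O(\varepsilon)$ completion bound of the kind you posit. If you want to salvage your route, you would need to prove the completion lemma for atomic marginals with infinite-valued $c$, and that lemma is, in substance, the theorem itself.
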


In the same paper, using the above theorem and an approximation argument, Pratelli obtains a result for general measures under the assumption of continuity of the cost. 

\subsubsection{General measures}
As mentioned above, Pratelli \cite{pratelli2008sufficiency} shows, for any Borel measures, that if the cost is continuous the equivalence holds. 

\begin{theo}[{\cite[Theorem B]{pratelli2008sufficiency}}]\label{thm:Partelli cont cost}
Let $X$ and $Y$ be Polish spaces, $\mu\in\mathcal{P}(X)$ and $\nu\in\mathcal{P}(Y)$, and assume that the cost function $c:X\times Y\to[0,\infty]$ is continuous. Then a finite transport plan $\gamma\in\Pi(\mu,\nu)$ is optimal if and only if it is concentrated on a $c$-cyclically monotone set. 
\end{theo}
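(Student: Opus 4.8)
The forward implication is exactly Theorem~\ref{thm: optimal better 1A}: a continuous $c$ is in particular Borel measurable, so any finite optimal plan is automatically $c$-cyclically monotone, and no extra work is needed. The whole content is therefore the converse, and the plan is to deduce it from the purely atomic case (Theorem~\ref{pratelliatomic}) by a discretisation that preserves $c$-cyclic monotonicity \emph{exactly}, rather than merely approximately.

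So, let $\gamma\in\Pi(\mu,\nu)$ be finite and concentrated on a $c$-cyclically monotone set $\Gamma$, and let $\pi\in\Pi(\mu,\nu)$ be an arbitrary competitor; we must show $C[\gamma]\le C[\pi]$, and we may clearly assume $C[\pi]<\infty$. For each $n$, using that a Polish space is separable, I would cover $X$ and $Y$ by countably many disjoint Borel cells $\{A_i\},\{B_j\}$ of diameter $<1/n$. The crucial step is the choice of representatives: since $\gamma$ is concentrated on $\Gamma$, for every product cell $C_k=A_i\times B_j$ with $\gamma(C_k)>0$ I would pick a point $(\xi_k,\eta_k)\in\Gamma\cap C_k$ and set $\gamma_n:=\sum_k\gamma(C_k)\,\delta_{(\xi_k,\eta_k)}$. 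By construction $\mathrm{supp}(\gamma_n)\subset\Gamma$, so $\gamma_n$ is $c$-cyclically monotone, and its marginals $\mu_n,\nu_n$ are purely atomic; moreover $\gamma_n\to\gamma$ weakly and, by uniform continuity of $c$ on a large compact set (Borel probability measures on Polish spaces are tight) together with a tail estimate coming from $C[\gamma]<\infty$, one gets $C[\gamma_n]\to C[\gamma]$. Theorem~\ref{pratelliatomic} then applies and shows that each $\gamma_n$ is optimal in $\Pi(\mu_n,\nu_n)$.

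It remains to produce competitors for $\gamma_n$ out of $\pi$. The point is that within each cell $A_i$ the measures $\mu$ and $\mu_n$ carry the same total mass $\mu(A_i)$ (and likewise $\nu,\nu_n$ on each $B_j$), so there is a Markov kernel $p^X(x,\cdot)$, supported on $\{\xi_k:\xi_k\in A_i\}$ for $x\in A_i$, whose $\mu$-average is $\mu_n$ — it is just the disintegration of any coupling of $\mu|_{A_i}$ with $\mu_n|_{A_i}$ — and symmetrically a kernel $p^Y$. Setting $\pi_n:=\int p^X(x,\cdot)\otimes p^Y(y,\cdot)\,d\pi(x,y)$ yields a plan in $\Pi(\mu_n,\nu_n)$ whose cost is controlled because each kernel moves a point only within its own cell: $|c(\xi_k,\eta_l)-c(x,y)|$ is at most the modulus of continuity of $c$ on the relevant compacts whenever $p^X(x,\xi_k)\,p^Y(y,\eta_l)>0$, so $\limsup_n C[\pi_n]\le C[\pi]$. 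Combining everything, $C[\gamma]=\lim_n C[\gamma_n]=\lim_n\min_{\Pi(\mu_n,\nu_n)}C\le\limsup_n C[\pi_n]\le C[\pi]$, and since $\pi$ was arbitrary, $\gamma$ is optimal.

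The conceptual heart is the representative choice $(\xi_k,\eta_k)\in\Gamma$: a naive discretisation onto fixed cell centres would keep the marginals rigid but destroy $c$-cyclic monotonicity, whose defining inequalities involve arbitrarily long cycles, so that a per-point error of size $1/n$ need not stay small; anchoring the atoms on $\Gamma$ instead keeps $\gamma_n$ exactly $c$-cyclically monotone at the price of letting the atomic marginals drift inside the cells. The main obstacle I expect is the quantitative bookkeeping of this trade-off: constructing $p^X,p^Y$ so that the marginals of $\pi_n$ are exactly $\mu_n,\nu_n$ while every displacement stays within a single cell, and handling the places where $c=+\infty$ or is merely large by a truncation argument justified by tightness and by $C[\gamma],C[\pi]<\infty$. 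Once these estimates are in place the passage to the limit is immediate.
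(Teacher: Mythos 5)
Your overall strategy --- reduce to the purely atomic case of Theorem~\ref{pratelliatomic} by discretising onto atoms chosen inside the $c$-cyclically monotone set $\Gamma$, then transfer competitors cell-by-cell --- is indeed the route this survey attributes to Pratelli, and the forward implication via Theorem~\ref{thm: optimal better 1A} is fine. But the decisive estimate $\limsup_n C[\pi_n]\le C[\pi]$ is exactly where the argument breaks, because the theorem allows $c$ to attain $+\infty$ (and the same problem already occurs for finite but unbounded continuous costs). Your competitor $\pi_n$ couples \emph{cross pairs} $(\xi_k,\eta_l)$ that $\pi$ never coupled; continuity only gives, for $\pi$-a.e.\ $(x,y)$, pointwise convergence of the integrand $f_n(x,y)=\sum_{k,l}p^X(x,\{\xi_k\})\,p^Y(y,\{\eta_l\})\,c(\xi_k,\eta_l)$ to $c(x,y)$, and Fatou then yields $\liminf_n C[\pi_n]\ge C[\pi]$ --- the wrong direction. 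There is no integrable dominating function: near the closed set $\{c=+\infty\}$ the modulus of continuity degenerates, so for every $n$ some cross pair with $c(\xi_k,\eta_l)=+\infty$ (or merely enormous) can receive positive $\pi_n$-mass, making $C[\pi_n]=+\infty$ identically even though $C[\pi]<\infty$. Your proposed repair by truncation does not go through as stated: $c$-cyclic monotonicity with respect to $c$ does not imply it with respect to $c\wedge M$, so the optimality of $\gamma_n$ furnished by Theorem~\ref{pratelliatomic} is only available for the untruncated cost, while the uncontrolled spikes sit in $f_n$ rather than in $c$, so the tail $\int_{\{c>M\}}c\,d\pi$ does not bound them. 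Closing this step is the genuinely hard part of Pratelli's proof, and tightness plus uniform continuity on a large compact set (which anyway controls $c$ only on compacta contained in $\{c<\infty\}$) is not enough. A secondary, fixable gap: with an arbitrary representative $(\xi_k,\eta_k)\in\Gamma\cap C_k$ you cannot even guarantee $C[\gamma_n]<\infty$, which you need both to invoke Theorem~\ref{pratelliatomic} (it concerns \emph{finite} plans) and to get $C[\gamma_n]\to C[\gamma]$; choose instead $(\xi_k,\eta_k)\in\Gamma\cap C_k$ with $c(\xi_k,\eta_k)\le\gamma(C_k)^{-1}\int_{C_k}c\,d\gamma+\varepsilon 2^{-k}$, which exists since $\gamma(\Gamma\cap C_k)=\gamma(C_k)$, and obtain the matching lower bound from $\gamma_n\rightharpoonup\gamma$ and lower semicontinuity of the total cost.

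It is also worth noting how this statement sits inside the present paper: here it is not proved by discretisation at all, but recorded as a consequence of Theorem~\ref{thm:closedsetnullset}, since for a continuous cost the set $\{c=+\infty\}$ is closed, so one may take $F=\{c=+\infty\}$ and $N=\emptyset$; the underlying mechanism is the countable decomposition into $c$-connecting pieces, each handled by the potential-based argument of Theorem~\ref{Optimal-if-connecting}. For finite-valued costs the claim already follows from Theorem~\ref{cFinite} without any continuity. So the only content your proof must supply beyond the machinery developed in the paper is precisely the infinite-valued case --- and that is exactly where your estimates fail.
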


Instead of insisting on the cost being continuous, it turns out that the statement holds for any real-valued Borel cost. A posteriori, this can be seen from Theorem \ref{thm:closedsetnullset}. Indeed, for any finite-valued
cost function the set $\{(x,y): \, c(x,y)=+\infty\}$ is empty and hence satisfies the conditions of the theorem, which in turn means that optimality and $c$-cyclic monotonicity of the plan are equivalent. 
Since Theorem \ref{thm:closedsetnullset} has a complex proof we choose to
first show the statement for real-valued costs.  We use the fundamental steps from \cite{Schachermayer-optimal-and-better} and, for clarity, we add more detail. 

\begin{theo}\label{cFinite}
Let $X, \,Y$ be Polish spaces equipped with Borel probability measures $\mu, \nu$, and let $c : X \times Y \to [0, \infty)$ ($c$ is real-valued) be a Borel measurable cost function. Then every $c$-cyclically monotone transport plan $\gamma$ with finite total cost is optimal.	
\end{theo}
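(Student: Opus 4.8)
The plan is to deduce optimality of $\gamma$ from the existence of an admissible Kantorovich dual pair: that is, to upgrade the $c$-cyclic monotonicity of the support of $\gamma$ to the statement that there are measurable potentials realising equality in the cost--separation inequality $\gamma$-almost everywhere. First I would fix a Borel $c$-cyclically monotone set $\Gamma$ on which $\gamma$ is concentrated and a base point $(x_0,y_0)\in\Gamma$, and define $\varphi$ by Rockafellar's formula \eqref{potential} from the proof of Theorem \ref{RocRuc}, setting $\psi:=\varphi^c$. Since $c$ takes only real values, that construction yields at once that $\varphi(x)+\psi(y)\le c(x,y)$ for every $(x,y)$, with equality whenever $(x,y)\in\Gamma$; in the language of Definition \ref{cheguaio} this is exactly $\Gamma\subset\partial^c\varphi$ together with $\varphi(x)+\varphi^c(y)=c(x,y)$ on $\Gamma$. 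Hence $\gamma$ is concentrated on $\{(x,y):\varphi(x)+\psi(y)=c(x,y)\}$ while $\varphi(x)+\psi(y)\le c(x,y)$ everywhere, which is the pointwise heart of optimality.

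The first technical point is measurability: the function $\varphi$ given by \eqref{potential} is an infimum over all finite chains of points of $\Gamma$, so a priori it is only universally measurable rather than Borel. Here I would invoke that $X$ and $Y$ are Polish and $\Gamma$ is Borel, so that the chains range over the Polish spaces $\Gamma^{n}$ and $\varphi$ is the infimum of an analytically parametrised family of Borel functions; such a function is universally measurable, and the same applies to $\psi$. Replacing $\varphi$ and $\psi$ by Borel functions agreeing with them $\mu$- respectively $\nu$-almost everywhere (harmless, since every plan in $\Pi(\mu,\nu)$ has marginals $\mu$ and $\nu$), I obtain genuine Borel potentials with $\varphi(x)+\psi(y)\le c(x,y)$ off a $\mu\otimes\nu$-null set and $\varphi(x)+\psi(y)=c(x,y)$ for $\gamma$-a.e.\ $(x,y)$.

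With such a pair in hand, optimality would follow formally from the chain
\[
\int c\,d\tilde\gamma\ \ge\ \int\big(\varphi(x)+\psi(y)\big)\,d\tilde\gamma\ =\ \int\varphi\,d\mu+\int\psi\,d\nu\ =\ \int\big(\varphi(x)+\psi(y)\big)\,d\gamma\ =\ \int c\,d\gamma,
\]
valid for every competitor $\tilde\gamma\in\Pi(\mu,\nu)$. The real obstacle --- and the place where I expect the genuine work to lie --- is justifying the two middle equalities, because Rockafellar's potential need not be $\mu$- or $\nu$-integrable, so the splitting of $\int(\varphi(x)+\psi(y))\,d\lambda$ into $\int\varphi\,d\mu+\int\psi\,d\nu$ risks an $\infty-\infty$ ambiguity even though $\int(\varphi(x)+\psi(y))\,d\gamma=\int c\,d\gamma$ is finite. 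To resolve this I would exploit the two structural hypotheses at our disposal, namely $c\ge 0$ and $\int c\,d\gamma<\infty$, which reduce the claim to competitors of finite total cost and allow one to control the positive part of $\varphi(x)+\psi(y)$ by $c$; the negative part, and with it the legitimacy of the splitting, is then recovered by a truncation of the potentials from below followed by a monotone-limit argument. This integrability bookkeeping, rather than the construction of the potentials, is the main difficulty, and it is precisely the additional detail we supply relative to the streamlined treatment in \cite{Schachermayer-optimal-and-better}.
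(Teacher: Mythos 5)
Your proposal follows essentially the same route as the paper's proof: the potential is built from Rockafellar's formula \eqref{potential}, its universal measurability is obtained by writing sublevel sets as projections of Borel sets (analytic sets), the potentials are then replaced by Borel versions agreeing $\mu$- resp.\ $\nu$-almost everywhere, and optimality is extracted by truncating the potentials and passing to the limit by monotone convergence against an arbitrary finite-cost competitor. One small caution: after the modification the inequality $\varphi(x)+\psi(y)\le c(x,y)$ can fail only on a set of the form $(N\times Y)\cup(X\times \tilde N)$ with $\mu(N)=\nu(\tilde N)=0$, which is negligible for \emph{every} plan in $\Pi(\mu,\nu)$ --- this is the property actually needed (and is stronger than the mere $\mu\otimes\nu$-negligibility you state, since a competitor need not be absolutely continuous with respect to $\mu\otimes\nu$); your parenthetical gives the correct justification, and the paper avoids the issue altogether by setting the Borel versions equal to $-\infty$ on the exceptional sets so that the inequality holds pointwise everywhere.
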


\begin{proof}  Let $\Gamma$ be a cyclically monotone set such that $\gamma(\Gamma)=1$. By Theorem \ref{RocRuc}, since $c$ is real-valued, there exists a function $\varphi:X\to\R\cup\{-\infty\}$ such that $\Gamma \subset \partial^c \varphi$.
In particular, this implies that 
\[c(x,y)-\varphi(x)\leq c(x',y)-\varphi(x'), \ \forall x'\in X, \ (x,y)\in \Gamma. \]	
In fact, Theorem \ref{RocRuc} gives a formula for $\varphi$, which we use to prove, in line with \cite{Schachermayer-optimal-and-better}, that $\varphi$ is universally measurable. This, together with an argument showing integrability, yields the optimality of the plan.

A crucial ingredient of the proof is the fact that the image of a Borel set via a Borel map is universally measurable (we recall the definition and the relevant facts about \emph{universal measurability} in Appendix \ref{sec:appendix universally}). 
To show this, we analyse the function defined in \eqref{potential} by  splitting the minimisation process in two steps. First we minimise  among chains of fixed length $n$, and define
\begin{multline*}
\varphi_n (x)= \inf \{c(x,y_n)-c(x_n,y_n)+c(x_n,y_{n-1})-c(x_{n-1},y_{n-1})+\dots+\\ c(x_1,y_0)-c(x_0,y_0):  \,
(x_i,y_i)\in \Gamma \ \ \mbox{for}\  \ i=1,\dots,n\}.  
\end{multline*}
Then we minimise with respect to $n$, which gives the potential
\[\varphi(x) = \inf_n \varphi_n (x).\]
Since $c$ is Borel measurable, we get that for each $n$ and any $n$-tuple of points in $\Gamma$ the function 
\[c(x,y_n)-c(x_n,y_n)+c(x_n,y_{n-1})-c(x_{n-1},y_{n-1})+\dots+\\ c(x_1,y_0)-c(x_0,y_0)\]
is a Borel measurable function on $X\times \Gamma^n$.
Hence, for each $n$ and $\alpha \in \R$ the set 
	\begin{align*}
	L_\alpha^n:= \big\{ (x,y_n,x_n, \ldots, y_0,x_0): \ c(x,y_n)-c(x_n,y_n)+&c(x_n,y_{n-1})-c(x_{n-1},y_{n-1})+ \\ &+\dots+ c(x_1,y_0)-c(x_0,y_0) <\alpha\big\}
	\end{align*}
	is a Borel subset of $X\times \Gamma^n$.
Since 
\[\{x : \ \varphi(x) < \alpha\} = \cup_n \{x : \ \varphi_n (x) < \alpha \}, \]
\[\{x : \ \varphi_n (x) < \alpha \} = p_X (L^n_\alpha) \]	
and $p_X$ is a Borel function, we obtain the universal measurability of $\varphi$. 

We now show that there exists a Borel measurable function $\tilde \varphi$ such that 
\[\tilde \varphi \leq \varphi \ \  \ \mbox{ and } \ \ \  \tilde \varphi= \varphi \ \  \mu-a.e.\]
Indeed let ${\mathcal I}=\{I_n\}$ be the family of half-closed intervals $[a_n, b_n)$ with rational endpoints. Observe that $\mathcal I$ generates the Borel $\sigma$-algebra on $\R$. 
Let $\tilde \mu$ be the completion of $\mu$ (for the definition see \eqref{def:completion} in Appendix \ref{sec:appendix universally}). Since $\varphi$ is universally measurable, $\varphi ^{-1}(I_n)$ is $\tilde \mu$-measurable and thus can be written as the union of sets $B_n \cup N_n$ with $B_n$ Borel measurable and $\mu(N_n)=0$. Let $N=\cup_n N_n$ we define 
\[\tilde \varphi (x) = \begin{cases}
-\infty & \mbox{if} \ x \in N,\\
\varphi(x) & \mbox{otherwise.}
\end{cases} \]   
We then consider  $\Gamma_1:=\Gamma \setminus p_X^{-1}(N)$ which is a $c$-cyclically monotone Borel set such that $\gamma(\Gamma_1)=1$ and 
\[c(x,y)-\tilde \varphi (x) \leq c(x',y)-\tilde \varphi (x'), \ \ \forall x' \in X, \ (x,y)\in \Gamma_1.    \]
Moreover $\tilde \varphi (x) \in \R$ on $p_X (\Gamma_1)$.
We proceed, as usual, passing to a $c$-transform of the candidate Kantorovich potential $\tilde \varphi$.
Define 
\[\psi (y) =\inf_{x\in p_X (\Gamma_1)} \{c(x,y)- \tilde \varphi(x)\}.\]
The function $\psi$ is universally measurable since 
\[\{y: \ \psi(y)<\alpha \} = p_Y (\{(x,y)\in \Gamma_1  :\ c(x,y)-\tilde \varphi(x) <\alpha  \}),\]
and $ \{(x,y): \ c(x,y)-\tilde \varphi(x) <\alpha  \}$ is a Borel set since $c$ and $\tilde \varphi$ are Borel measurable.
Notice, also, that 
\[\tilde \varphi (x) + \psi(y)= c(x,y), \ \forall (x,y)\in \Gamma_1, \]
and 
\[\tilde \varphi (x) + \psi(y)\leq c(x,y), \ \forall (x,y) \in p_X(\Gamma_1) \times Y, \]
As in the previous case, assigning the value $-\infty$ on a $\nu$-null set $\tilde{N}$,  we consider a Borel measurable $\tilde \psi: Y\to [-\infty, +\infty)$
such that 
\[\tilde \psi \leq \psi \ \  \mbox{and} \ \ \tilde \psi= \psi \ \nu-a.e.\]
We define $\Gamma_2:= \Gamma_1 \setminus p_Y ^{-1}(\tilde{N})$.
The set $\Gamma_2$ is Borel measurable and $c$-cyclically monotone, and $\gamma(\Gamma_2)=1$. Moreover
\[\tilde \varphi (x)+ \tilde \psi (y)=c(x,y), \ \forall (x,y)\in \Gamma_2 \]
and 
\[\tilde \varphi (x)+ \tilde \psi (y)\leq c(x,y), \ \forall (x,y)\in P_X (\Gamma_1)\times Y.\]
Since the potential needs to be defined everywhere on $X\times Y$ we let $\tilde \varphi$ be $-\infty$ outside $p_X (\Gamma_2)$.

As a result we obtain a pair of Borel measurable functions $(\tilde \varphi, \tilde \psi)$, which is $c$-splitting for the $c$-cyclically monotone set $\Gamma_2$ on which $\gamma$ is concentrated.
The possible lack of integrability of $\tilde \varphi$ and $\tilde \psi$ does not allow us to deduce the optimality of $\gamma$ in the usual way, that is integrating directly expressions $\tilde \varphi+\tilde\psi=c$ on the support of $\gamma$ and $\tilde\varphi+\tilde\psi\le c$ everywhere.  We use truncations instead. 

We fix an arbitrary transport plan $\gamma'\in\Pi(\mu,\nu)$; we may assume that $C[\gamma']<+\infty$. 
We consider truncated potentials $\tilde\varphi_n = (n\wedge (\tilde\varphi\vee -n))$ and $\tilde\psi_n=(n\wedge (\tilde\psi\vee -n))$ and denote $\xi=\tilde\varphi+\tilde\psi$, $\xi_n=\tilde\varphi_n+\tilde\psi_n$. 
Now $\xi^n\uparrow \xi^+$ on the set $\{\xi\ge 0\}$, $\xi^n\downarrow \xi^-$ on the set $\{\xi\le 0\}$ and by monotone convergence, as $n\to\infty$,
\begin{align*}
&\int_{\xi\ge 0}\xi_nd\gamma\uparrow\int_{\xi\ge 0}\xi d\gamma<+\infty,~~~\int_{\xi\ge 0}\xi_nd\gamma'\uparrow\int_{\xi\ge 0}\xi d\gamma'<+\infty,\\
&\int_{\xi\le0}\xi^n d\gamma\downarrow\int_{\xi\le0}\xi d\gamma ~~~\text{and}~~~\int_{\xi\le0}\xi_n d\gamma'\downarrow\int_{\xi\le0}\xi d\gamma'.
\end{align*}
Therefore the integrals of $\xi$ with respect to $\gamma$ and $\gamma'$ are well-defined, and
\[\lim_{n\to\infty}\int \xi_nd\gamma=\int\xi d\gamma~~~\text{and}~~~\lim_{n\to\infty}\int\xi_n d\gamma'=\int \xi d\gamma'.\] 
Note that since $\gamma$ and $\gamma'$ have the same marginals, we have 
\begin{align*}
&\int\xi d\gamma=\lim_{n\to\infty}\int\xi_n d\gamma=\lim_{n\to\infty}\left(\int\tilde\varphi_n d\mu+\int \tilde\psi_n d\nu\right)\\
&=\lim_{n\to\infty}\int \xi_n d\gamma'=\int\xi d\gamma'.
\end{align*}
The optimality of $\gamma$ now readily follows since
\begin{align*}
C[\gamma]&=\int cd\gamma=\int\xi d\gamma=\int\xi d\gamma' 
\ge\int c d\gamma'=C[\gamma']. \qedhere
\end{align*}
\end{proof}

%At this point only the question of the existence of a potential for costs that attain infinite values remains.  
Note that the starting point of the above proof of Theorem \ref{cFinite} is the formula \eqref{potential} which in Theorem \ref{RocRuc} gives the existence of a real-valued potential. However, as we have seen in Section \ref{sec:cyclic_mon}, when the cost attains infinite values this approach fails. Hence, some assumptions must be added to give sense to the formula, and a useful notion to consider seems to be $c$-connectivity (recall Definition \ref{connecting}). 

Another approach is to arrive at the existence of a potential without the use of a formula of the type \eqref{potential}.
In fact it was shown in \cite{artstein2022rockafellar} that if the cost attains infinite values then $c$-cyclic monotonicity is no longer equivalent to the existence of a potential (see example at the end of Section 2).  The authors established that a necessary and sufficient condition for the existence of a potential is that of $c$-path-boundedness (see Definition \ref{def:c-path-bdd}). This approach also appears in the multi marginal setting (see Theorem  \ref{thm:griessler}). 

The issue arising from the latter approach is the possible lack of measurability. This is why we are reverting to the condition of $c$-connectivity.

\begin{theo}\label{Optimal-if-connecting}
	Let $X, \,Y$ be Polish spaces equipped with Borel probability measures $\mu, \nu$, and let $c : X \times Y \to [0, \infty]$ a Borel measurable cost function. Let $\gamma$ be a $c$-cyclically monotone transport plan with finite total cost and assume that $\gamma$ is supported on a  $c$-connecting set $\Gamma$.
Then $\gamma$ is optimal.
\end{theo}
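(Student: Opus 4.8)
The plan is to follow the architecture of the proof of Theorem \ref{cFinite} almost verbatim, the only genuinely new ingredient being the way $c$-connectivity is used to rescue the Rockafellar construction once the cost is allowed to take the value $+\infty$. For a finite cost the formula \eqref{potential} automatically produces a real-valued potential; when $c$ may equal $+\infty$ this fails in general, but the hypothesis that $\Gamma$ is $c$-connecting (Definition \ref{connecting}) is designed precisely to restore it. So the first step is to fix a reference point $(x_0,y_0)\in\Gamma$ and define $\varphi$ by \eqref{potential}, with chains taken inside $\Gamma$. Since every $(x_i,y_i)\in\Gamma$ has $c(x_i,y_i)<+\infty$, the subtracted terms are always finite and no indeterminate $\infty-\infty$ arises; $c$-cyclic monotonicity keeps $\varphi>-\infty$, while $c$-connectivity guarantees that for each $x\in p_X(\Gamma)$ there is an admissible chain with only finite entries, so that $\varphi$ is real-valued on $p_X(\Gamma)$ and, as in Theorem \ref{RocRuc}, satisfies $\Gamma\subset\partial^c\varphi$, i.e. $c(x,y)-\varphi(x)\le c(x',y)-\varphi(x')$ for all $x'\in X$ and $(x,y)\in\Gamma$.

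Second, I would transport the measurability argument of Theorem \ref{cFinite} to this setting. Writing $\varphi=\inf_n\varphi_n$, where $\varphi_n$ minimises only over chains of length $n$ inside the (Borel) set $\Gamma$, each sublevel set $\{x:\varphi_n(x)<\alpha\}$ is the projection $p_X(L^n_\alpha)$ of a Borel subset of $X\times\Gamma^n$; here the point to check is that the chain expression remains a Borel function into $(-\infty,+\infty]$ even though $c$ now takes values in $[0,+\infty]$, which holds because only the finite on-chain values $c(x_i,y_i)$ are subtracted. Since the image of a Borel set under a Borel map is analytic, hence universally measurable, and sublevel sets of $\varphi$ are countable unions of such projections, $\varphi$ is universally measurable. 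From here one proceeds exactly as before: build a Borel $\tilde\varphi\le\varphi$ with $\tilde\varphi=\varphi$ $\mu$-a.e. by deleting a $\mu$-null set, pass to the $c$-transform $\psi(y)=\inf_{x\in p_X(\Gamma_1)}\{c(x,y)-\tilde\varphi(x)\}$, note its universal measurability, and replace it by a Borel $\tilde\psi\le\psi$ agreeing with $\psi$ $\nu$-a.e. by setting it to $-\infty$ on a $\nu$-null set. This yields a Borel $c$-splitting pair $(\tilde\varphi,\tilde\psi)$ on a Borel $c$-cyclically monotone set $\Gamma_2$ with $\gamma(\Gamma_2)=1$, satisfying $\tilde\varphi(x)+\tilde\psi(y)=c(x,y)$ on $\Gamma_2$ and $\tilde\varphi(x)+\tilde\psi(y)\le c(x,y)$ everywhere — the latter being trivially true wherever $c=+\infty$.

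Finally, optimality follows from the truncation argument already used at the end of Theorem \ref{cFinite}: for an arbitrary competitor $\gamma'\in\Pi(\mu,\nu)$ with $C[\gamma']<+\infty$, one truncates $\xi=\tilde\varphi+\tilde\psi$ as $\xi_n=\tilde\varphi_n+\tilde\psi_n$, applies monotone convergence separately on $\{\xi\ge0\}$ and $\{\xi\le0\}$, uses that $\gamma$ and $\gamma'$ share the marginals $\mu,\nu$ to conclude $\int\xi\,d\gamma=\int\xi\,d\gamma'$, and reads off
\[
C[\gamma]=\int c\,d\gamma=\int\xi\,d\gamma=\int\xi\,d\gamma'\le\int c\,d\gamma'=C[\gamma'].
\]

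I expect the main obstacle to be the second step: verifying that allowing $c$ to take the value $+\infty$ breaks neither the real-valuedness of $\varphi$ on $p_X(\Gamma)$ nor the Borel/analytic bookkeeping behind universal measurability. Concretely, one must confirm that the admissible chains supplied by $c$-connectivity genuinely make the infimum in \eqref{potential} finite, that the chain functionals are Borel into the extended reals with no cancellation of infinities, and that the set $\Gamma$ carrying $\gamma$ may be taken Borel without destroying $c$-connectivity — this last measurability bookkeeping is exactly why one works with $c$-connectivity here rather than with the more general but non-constructive $c$-path-boundedness.
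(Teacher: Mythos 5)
Your architecture coincides with the paper's: define $\varphi$ by the Rockafellar--Rochet--R\"uschendorf formula \eqref{potential} with chains in $\Gamma$, use $c$-connectivity to make the formula meaningful, and then reuse the universal-measurability and truncation machinery of Theorem \ref{cFinite} unchanged. But the one genuinely new step --- and the only thing the paper actually proves in detail for this theorem --- is precisely the step you assert with an incorrect justification and then defer: that $\varphi(x)>-\infty$ for $x\in p_X(\Gamma)$. Your sentence ``$c$-cyclic monotonicity keeps $\varphi>-\infty$'' is false for costs attaining $+\infty$: a $c$-cyclically monotone set need not admit any potential at all (this is the content of the example from \cite{artstein2022rockafellar} recalled in Section \ref{sec:cyclic_mon}), because cyclic monotonicity only controls chain sums that can be closed into a cycle with \emph{finite} steps; in the finite-cost case one closes any chain in a single step back to $x_0$, but here $c(x_0,y_n)$ may be $+\infty$, and the resulting bound is the vacuous $\varphi(x)\ge-\infty$. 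Connectivity is needed for the lower bound, not only for $\varphi(x)<+\infty$: given $x\in p_X(\Gamma)$, pick $(a_1,b_1)\in\Gamma$ with $a_1=x$ and, by Definition \ref{connecting}, a chain $(a_1,b_1),\dots,(a_m,b_m)\in\Gamma$ ending at $(a_m,b_m)=(x_0,y_0)$ with $c(a_2,b_1),\dots,c(a_m,b_{m-1})<+\infty$. Appending this \emph{fixed} return chain to an arbitrary admissible chain for $\varphi(x)$ of finite value produces a closed cycle all of whose terms are finite, and $c$-cyclic monotonicity applied to this cycle yields
\[
\varphi_n(x,x_1,y_1,\dots,x_n,y_n)\ \ge\ c(a_m,b_m)-c(x_0,b_m)+\sum_{i=1}^{m-1}\bigl(c(a_i,b_i)-c(a_{i+1},b_i)\bigr)=:\alpha ,
\]
a finite bound independent of the chain, whence $\varphi(x)\ge\alpha>-\infty$. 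You flag this finiteness as ``the main obstacle'' in your closing paragraph, but naming the obstacle is not resolving it; without the cycle-closing argument the first step of your proof has no proof, and everything downstream (the a.e.\ Borel modification, the $c$-transform, the truncation) rests on it.

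The remainder of your plan is sound and matches the paper. The subgradient inequality $\Gamma\subset\partial^c\varphi$ does carry over from Theorem \ref{RocRuc} once $\varphi$ is finite on $p_X(\Gamma)$ (the paper proves the equivalent inequality $\varphi(x)\le\varphi(x')+c(x,y)-c(x',y)$ by comparing infima over nested families of chains, which amounts to the same appending argument); the Borel bookkeeping is unaffected because only the on-chain values $c(x_i,y_i)$, which are finite on $\Gamma$, are subtracted, so the chain functionals are Borel into $(-\infty,+\infty]$; and your observation that $\tilde\varphi+\tilde\psi\le c$ holds trivially where $c=+\infty$ is correct.
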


\begin{proof} 
Let us recall the Rockafellar-Rochet-R\"uschendorf formula: fix $(x_0,y_0)\in \Gamma$ and define the function
$\varphi_n:X\times \Gamma^n:(-\infty,+\infty]$ by setting 
\begin{equation}\label{formula}
\varphi_n(x,x_1,y_1,\ldots,x_n,y_n)=c(x,y_n)-c(x_n,y_n)+\sum_{i=0}^{n-1}\big(c(x_{i+1},y_i)-c(x_i,y_i)\big).
\end{equation}
Define for all $x\in X$
\begin{equation}\label{rock}
\varphi(x):=\inf\{\varphi_n(x,x_1,y_1,\ldots, x_n,y_n)~|~n\ge 1, ~(x_i,y_i)_{i=1}^n\in\Gamma^n\}.
\end{equation}

The key is to prove that also in this case the formula (\ref{rock}) makes sense. From that point on one can proceed like in the proof of Theorem \ref{cFinite}. Hence it suffices to show the following:

\begin{claim}\quad  \vspace{-0.8cm}
\begin{align}
  &\varphi(x)\in\R\text{ for all }x\in p_X(\Gamma)\ \ \text{ and }\label{claim1}\\
&\varphi(x)\le\varphi(x')+ c(x,y)-c(x',y)~~~\text{for all }x\in X\text{ and }(x',y)\in \Gamma.\label{ineq}
\end{align} \end{claim}

\textit{Proof. }
We fix $x\in p_X(\Gamma)$. So there exists $y\in Y$ such that $(x,y)\in\Gamma$. 
Since $\Gamma$ is $c$-connecting and $(x,y),(x_0,y_0)\in\Gamma$, we can find $(x_1,y_1),\ldots, (x_{n-1},y_{n-1})\in\Gamma$ such that, setting $(x_n,y_n)=(x,y)$, we have 
\[\varphi_n(x;x_1,y_1,\ldots, x_n,y_n)<+\infty.\]
This means that $\varphi(x)<+\infty$, and it remains to show that $\varphi(x)>-\infty$. 

We denote $a_1=x$. Since $x\in p_X(\Gamma)$, there exists $b_1\in Y$ such that $(a_1,b_1)\in\Gamma$. The $c$-connectivity of $\Gamma$ gives us the existence of a chain $(a_1,b_1),\ldots,(a_m,b_m)\in\Gamma$ such that $c(a_2,b_1),$ $c(a_3,b_2),\ldots, c(a_m,b_{m-1})<+\infty$. 
For any ordered collection $(x_1,y_1),\ldots,(x_n,y_n)\in\Gamma$ such that 
\[\varphi_n(x,x_1,y_1,\ldots, x_n,y_n)<+\infty\]
we extend the chain $(x_1,y_1),\ldots, (x_n,y_n)$ by setting for all $i\in\{1,\ldots, m\}$ $x_{n+i}=a_i$ and $y_{n+i}=b_i$. 
Due to the $c$-cyclic monotonicity of $\gamma$ and the finiteness of all the terms involved, we have (denoting $x_{m+n+1}=x_0$) that
\[0\le \sum_{i=0}^{n+m}\left(c(x_{i+1},y_i)-c(x_i,y_i)\right)= c(x_0,y_{n+m})-c(x_{n+m},y_{n+m})+\sum_{i=0}^{n+m-1}\big(c(x_{i+1},y_i)-c(x_i,y_i)\big).\]
That is 
\begin{align*}
\alpha:&=c(a_m,b_m)-c(x_0,b_m)+\sum_{i=1}^{m-1}\big(c(a_i,b_i)-c(a_{i+1},b_i)\big)\\ &\le c(x,y_n)-c(x_n,y_n)+\sum_{i=0}^{n-1}\big(c(x_{i+1},y_i)-c(x_i,y_i)\big).
\end{align*}
 This, in turn, means that $\alpha\le \varphi_n(x,x_1,y_1,\ldots,x_n,y_n)$, and passing to the infimum we get that  $-\infty<\alpha\le\varphi(x)$. 

To prove the inequality (\ref{ineq}), observe that its right hand side can we written as 
\begin{align*}
&\inf_{n\ge 1, (x_i,y_i)_{i=1}^n\in\Gamma^n}\left\{c(x',y_n)-c(x_n,y_n)+\sum_{i=0}^{n-1}\big(c(x_{i+1},y_i)-c(x_i,y_i)\big)\right\} +c(x,y)-c(x',y)\\
&=\inf_{n\ge 1,(x_i,y_i)_{i=1}^n\in\Gamma^n}\left\{c(x,y)-c(x',y)+\sum_{i=0}^{n-1}\big(c(x_{i+1},y_i)-c(x_i,y_i)\big)+\big(c(x',y_n)-c(x_n,y_n)\big)\right\}\\
&=\inf_{m\ge 1,(x_i,y_i)_{i=1}^m\in\Gamma^m,(x_m,y_m)=(x',y)}\left\{c(x,y_m)-c(x_m,y_m)+\sum_{i=0}^{m-1}[c(x_{i+1},y_i)-c(x_i,y_i)]\right\}\\
&=\inf_{m\ge 1,(x_i,y_i)_{i=1}^m\in\Gamma^m,(x_m,y_m)=(x',y)}\varphi_n(x,x_1,y_1,\ldots, x_m,y_m).
\end{align*}
Finally, we conclude that on both sides of \eqref{ineq} we take the infimum of the same function but on the right-hand side the set over which we minimize is smaller, which concludes the proof. 
\end{proof}

\begin{rema}
Example \ref{esempioovunque} shows that the assumption of connectivity in the above theorem is not necessary for optimality. %Indeed, in the example, $\Gamma$ is not connecting.
\end{rema}

The previous theorem can be used as a building block to study under which conditions all $c$-cyclically monotone transport plans are optimal. This boils down to the study of the structure of the set where the cost is infinite, as it has been done in \cite{Schachermayer-optimal-and-better}, and we recall the theorem. 

\begin{manualtheorem}{18}[{\cite[Theorem 1.b]{Schachermayer-optimal-and-better}}]
Assume that the set where $c\equiv +\infty$ is the union of a closed set $F$ and a $\mu\otimes\nu$-null set $N$. Let $\gamma\in\Pi(\mu,\nu)$ be a finite and $c$-cyclically monotone plan. Then $\gamma$ is optimal.
\end{manualtheorem}

The proof is based on finding a set $\Gamma\subset X\times Y$ of full $\gamma$-measure and a partition $\{\Gamma\cap(C_i\times D_i)\}_{i\in I}$ of $\Gamma$ which is countable and such that each element of the partition is $c$-connecting. This can be done due to the fact that the set $X\times Y\setminus F$ is open. The key property of the partition is that not only the starting plan $\gamma$ but also \emph{any other finite transport plan $\tilde\gamma$} satisfies $\tilde\gamma\left(\bigcup_{i\in I}C_i\times D_i\right)=1$. 

\subsection{Existence of plans supported on a $c$-subgradient}
In \cite{artstein2023optimal} the authors took a slightly different path to finding a Brenier-type map. Omitting the equivalence between optimality and a plan being supported on a $c$-subgradient of a function, they showed that such an optimal plan exists. More precisely, we have the following theorem.  

\begin{theo}[{\cite[Theorem 1.1]{artstein2023optimal}}]\label{thm:transport-polar-comp}
	Let $X=Y$ be a Polish space, let $c: X\times Y \to \R\cup\{+\infty\}$ be a continuous and symmetric cost function, essentially bounded from below with respect to probability measures $\mu\in {\mathcal P}(X)$ and $\nu\in {\mathcal P}(Y)$. Assume $\mu$ and $\nu$  are strongly $c$-compatible, namely satisfy that for any measurable $A\subset X$ we have 
	\begin{align}\label{cond:compatibility}
	\mu(A) + \nu(\{y\in Y: \forall x\in A, \,\, c(x,y) = \infty \}) \le 1,
	\end{align}
	and for any measurable $A\subset X$ with $\mu(A)\neq 0,1$ we have
	\begin{align}\label{cond:strong-comp}
	\mu(A) + \nu(\{y\in Y: \forall x\in A, \,\, c(x,y) = \infty \}) < 1.
	\end{align}
	If there exists \emph{some} finite plan transporting $\mu$ to $\nu$, then there exists a  {$c$-class function} $\varphi$ and an optimal transport plan $\pi \in \Pi(\mu, \nu)$ concentrated on $\partial^c \varphi$.
\end{theo}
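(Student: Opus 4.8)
The plan is to produce a single optimal plan, show that (after discarding a null set) its support is $c$-connecting in the sense of Definition \ref{connecting}, and then run verbatim the Rockafellar--Rochet--R\"uschendorf construction from the proof of Theorem \ref{Optimal-if-connecting} to manufacture the desired $c$-class potential. First I would secure an optimal plan: since $c$ is continuous (hence lower semi-continuous) and essentially bounded from below, Theorem \ref{thm:villani} provides an optimal $\pi\in\Pi(\mu,\nu)$, and because a finite plan exists by hypothesis, $C[\pi]$ is finite. After the standard reduction to a cost with values in $[0,\infty]$ (absorbing the lower bound, as explained before Theorem \ref{thm:villani}), Theorem \ref{thm: optimal better 1A} shows that $\pi$ is $c$-cyclically monotone, i.e. concentrated on a Borel $c$-cyclically monotone set $\Gamma$ on which $c<\infty$ $\pi$-a.e.

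The heart of the argument is to upgrade $\Gamma$ to a $c$-connecting set using strong $c$-compatibility. Writing $\Phi(A):=\{y:\exists x\in A,\ c(x,y)<\infty\}$, condition \eqref{cond:strong-comp} reads $\nu(\Phi(A))>\mu(A)$ for every measurable $A$ with $0<\mu(A)<1$; this is a \emph{strict} Hall-type expansion condition on the ``finite-cost'' bipartite relation, and I would use it to rule out a splitting of $\Gamma$ into interacting pieces. Decompose $\Gamma$ (up to a $\pi$-null set) into its $\approx$-equivalence classes. Two points of $\Gamma$ sharing the same first coordinate and having finite cost are automatically $\approx$-equivalent (the one-step chains require only $c(x,y),c(x,y')<\infty$), and symmetrically for a shared second coordinate; hence distinct classes have disjoint $X$- and $Y$-projections. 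A nontrivial decomposition would therefore yield a saturated union $C$ with $A:=p_X(C)$ satisfying $0<\mu(A)=\pi(C)<1$, and mass conservation inside $C$ gives $\nu(p_Y(C))=\mu(A)$ together with $p_Y(C)\subseteq\Phi(A)$. The role of strong $c$-compatibility is to force $\Phi(A)$ to be no larger than $p_Y(C)$ up to $\nu$-null sets, so that $\nu(\Phi(A))=\mu(A)$, contradicting \eqref{cond:strong-comp}. Thus $\Gamma$ is, after removing a null set, $c$-connecting.

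With $\Gamma$ both $c$-cyclically monotone and $c$-connecting, I would then apply the construction in the proof of Theorem \ref{Optimal-if-connecting}: the formula \eqref{rock} produces a function $\varphi$ that is real-valued on $p_X(\Gamma)$ and satisfies $\varphi(x)\le\varphi(x')+c(x,y)-c(x',y)$ for all $(x',y)\in\Gamma$, i.e. $\Gamma\subseteq\partial^c\varphi$. Being an infimum of functions of the form $c(\cdot,y_n)+\mathrm{const}$, the potential $\varphi$ is a $c$-transform and hence a $c$-class function, while $\pi(\partial^c\varphi)\ge\pi(\Gamma)=1$. Since $\pi$ is already optimal, this yields the optimal plan concentrated on $\partial^c\varphi$ claimed in the statement.

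The main obstacle is the middle step, and within it the inclusion $\Phi(A)\subseteq p_Y(C)$. The difficulty is that the relation $\approx$ is built from \emph{directed} alternating chains (through the conditions $c(x_{i+1},y_i)<\infty$), whereas ``$y$ is finite-cost-reachable from $A$'' is an undirected one-step notion; reconciling the two requires exploiting the symmetry of $c$ together with a careful irreducible-decomposition argument to exclude one-sided leakage of mass between putative components, as well as the measurability of the equivalence classes (for which the universal-measurability toolkit from the proof of Theorem \ref{cFinite} is the natural device). This is precisely where both the strictness in \eqref{cond:strong-comp} and the standing assumption that a finite plan exists enter.
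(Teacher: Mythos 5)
Your architecture --- existence of an optimal plan via Theorem \ref{thm:villani}, $c$-cyclic monotonicity via Theorem \ref{thm: optimal better 1A}, an upgrade to $c$-connectivity from strong $c$-compatibility, and then the formula \eqref{rock} as in Theorem \ref{Optimal-if-connecting} --- is exactly the strategy sketched in the remarks following the theorem, so the route matches the paper's. But the middle step, which you yourself flag as the main obstacle, contains a genuine gap, and it is not the one you think you can fix. First, the symmetry of $c$ is of no help with directedness: a one-step move from $(x,y)$ to $(x',y')$ requires $c(x',y)<+\infty$, while the reverse move requires $c(x,y')<+\infty$, and $c(x',y)=c(y,x')$ says nothing about $c(x,y')$; so ``exploiting the symmetry of $c$'' cannot symmetrize $\precsim$. (The one-sidedness could instead be handled by choosing the saturated union $C$ to be closed under $\precsim$-predecessors: then any finite-cost link from $p_X(C)$ into $p_Y(\Gamma\setminus C)$ is itself an incoming edge into $C$ and is excluded. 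In other words, the inclusion $\Phi(A)\subseteq p_Y(C)$ is a matter of choosing $C$ correctly; it is not something strong compatibility ``forces,'' as you assert.)

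Second, and more fatally: your argument uses continuity of $c$ nowhere beyond lower semi-continuity (all that Theorems \ref{thm:villani} and \ref{thm: optimal better 1A} need), yet the claim ``strongly compatible $+$ finite $c$-cyclically monotone plan $\Rightarrow$ support essentially $c$-connecting'' is \emph{false} at that level of generality. Symmetrize Example \ref{ex:connecing+c-pbdd}: take $X=Y=[0,1]$ with endpoints identified, $\mu=\nu=\lambda$ the Lebesgue measure, and $c=1$ on the diagonal, $c=2$ when $x=y\pm\alpha \pmod 1$ with $\alpha$ irrational, $c=+\infty$ otherwise. This cost is symmetric and lower semi-continuous, and $(\lambda,\lambda)$ is strongly $c$-compatible: with your notation, $\Phi(A)=A\cup(A+\alpha)\cup(A-\alpha)$, and $\lambda(\Phi(A))=\lambda(A)$ would force $A$ to be essentially invariant under the rotation, hence of measure $0$ or $1$ by ergodicity, so \eqref{cond:strong-comp} holds. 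The optimal plan is the identity plan (total cost $1$), and its support, the diagonal, is \emph{not} $c$-connecting even after removing an arbitrary null set: the $\approx$-classes are the countable orbits $\{x+k\alpha \pmod 1\}$, so there are uncountably many classes, each of measure zero. This destroys your step ``a nontrivial decomposition yields a saturated union $C$ with $0<\mu(A)<1$'': here every measurable saturated predecessor-closed union has measure $0$ or $1$, and the construction of Theorem \ref{Optimal-if-connecting} genuinely breaks (the infimum in \eqref{rock} is $+\infty$ off the orbit of the base point), even though the theorem's conclusion survives --- $\varphi\equiv 0$ works --- via $c$-path-boundedness (Definition \ref{def:c-path-bdd}) rather than connectivity. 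So continuity must enter the connectivity upgrade in an essential way: it makes $\{c<+\infty\}$ open, which, as in the decomposition argument sketched after Theorem \ref{thm:closedsetnullset}, produces a \emph{countable} partition into $c$-connecting pieces, and only then can the strict Hall-type inequality \eqref{cond:strong-comp} be applied to force the partition to be trivial. As written, your proof would establish a false statement, so the gap is structural, not technical.
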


\begin{rema}
Note that the condition \eqref{cond:compatibility} is a necessary condition for the existence of a plan such that each atom is transported at finite cost. In the special discrete setting this condition corresponds to the Hall's marriage theorem that guarantees the existence of a matching. The second condition \eqref{cond:strong-comp} turns out to be a sufficient condition. It guarantees that the optimal plan, which is known to be $c$-cyclically monotone is \emph{connecting} and therefore $c$-path-bounded. 
\end{rema}

\begin{rema}
The assumption of lower semi-continuity of the cost is standard and as mentioned previously it guarantees the existence of an optimal plan. The authors assume upper semi-continuity to guarantee measurability of the set $\{y\in Y: \forall x\in A, \,\, c(x,y) = \infty \}$, which in this case is simply closed and hence measurable. However, this assumption is not needed. Indeed, assuming only the lower semi-continuity of the cost we see that the set \[\{y\in Y: \forall x\in A, \,\, c(x,y) = \infty \} = \cap_{n\in\N} \{y\in Y: \forall x\in A, \,\, c(x,y) > n\}\]  is a countable intersection of open sets and as such is measurable. Moreover, the measurability of the potential follows from the fact that the support of the plan is $c$-connecting.   
\end{rema}

Moreover, in the case when the target measure $\nu$ has finite support, it was shown (\cite[Theorem 1.3]{artstein2023optimal}) that it is sufficient to assume measurability of the cost (and a mild condition regarding a joint property of the measures and the cost called $c$-regularity, see \cite[Definition 5.1]{artstein2023optimal}).

\section{More general transport problems}
\subsection{The multi-marginal, integral case}\label{mot}
Let $N\ge 2$,  $X_1,\ldots,X_N$ be Polish spaces, denote $X=\prod_{i=1}^NX_i$ and consider a cost function $c:X\to \R\cup\{+\infty\}$. The natural generalization of the classical optimal transport to the multidimensional case is to consider
\begin{equation*}
 \min_{\gamma \in \Pi(\mu_1,\ldots,\mu_N)} C(\gamma):= \min_{\gamma \in \Pi(\mu_1,\ldots,\mu_N)} \int_X c 
 d\gamma, %\tag{P^N}
\end{equation*}
 in the set 
\[\Pi(\mu_1,\ldots,\mu_N):=\{\gamma\in\mathcal{P}(X)~|~p_{X_i}(\gamma)=\mu_i\text{ for all }i=1,\ldots,N\},\]
that is, in the set of \emph{couplings} or \emph{transport plans} between the $N$ marginals $\mu_1,\ldots,\mu_N$. 

Like in the two-marginals case, to establish the existence of minimizers it is sufficient to assume, for example, that the cost function is lower semi-continuous and essentially bounded from below. 

The notion of multidimensional $c$-cyclic monotonicity was introduced in Definition \ref{multicyclic}. It brings the concept of $c$-cyclic monotonicity in classical optimal transport to the multimarginal case. The multimarginal definition is more complex but carries the same idea: if we permute the `destinations' of points in the support of an optimal plan, the cost should not improve. 
In finite spaces for real-valued cost functions,  $c$-cyclic monotonicity and optimality are equivalent also in the multimarginal case, see Theorem \ref{CmIffOptimal} in Appendix \ref{appendix:finitely}. 

The rest of this subsection is dedicated to an overview of what we know about the sufficiency of $c$-cyclic monotonicity for optimality in general Polish spaces for arbitrary Borel probability measures as marginals. 

Recall that the concept of a $c$-splitting set (Definition \ref{cm}) is the multimarginal counterpart of the $c$-subgradient and a useful tool for proving the optimality of a plan. 
In \cite{kim2014general} Kim and Pass proved that the multimarginal  $c$-cyclic monotonicity is a necessary condition for the $c$-splitting property of a set. 
In \cite{griessler}, Griessler showed that in Polish spaces, under some boundedness conditions on a real-valued cost function, $c$-cyclical monotonicity of the support of a plan implies its optimality. More precisely, we have the following theorem. 

\begin{theo}[{\cite[Theorem 1.2]{griessler}}]\label{thm:griessler}
	Let $X_1,\ldots,X_N$ be Polish spaces and $\mu_1,\ldots,\mu_N$ be Borel probability measures on the spaces $X_1,\ldots,X_N$. Let $c:X_1\times \ldots \times X_N \to [0,\infty)$ be a continuous cost function such that for some $f_i\in L^1(X_i,\mu_i)$, $i=1,\ldots,N$, one has $c(x_1,\ldots,x_N)\le\sum_{i=1}^Nf_i(x_i)$.\footnote{This condition does not need to hold everywhere. It is enough that there exist $N$ sets $N_i\subset X_i$ such that $\mu_i( N_i)=0$ and such that the condition holds outside $\cup_i  p_{X_i}^{-1} (N_i)$. } 
	Let $\gamma \in \Pi(\mu_1, \ldots, \mu_N)$ be a transport plan such that ${\rm supp} (\gamma)$ is $c$-cyclically monotone. Then $\gamma$ is optimal. 
\end{theo}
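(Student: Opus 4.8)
The plan is to mirror the proof of Theorem \ref{cFinite}, replacing the pair $(\tilde\varphi,\tilde\psi)$ by a full $c$-splitting tuple $(\varphi_1,\ldots,\varphi_N)$ in the sense of Definition \ref{cm}. Write $\Gamma={\rm supp}(\gamma)$, a closed, hence separable, subset of $X=\prod_i X_i$ on which $c$ is finite and, by hypothesis, $c$-cyclically monotone. Granting a measurable splitting tuple with $\sum_i\varphi_i\le c$ on all of $X$ and $\sum_i\varphi_i=c$ on $\Gamma$, optimality then follows exactly as in the two-marginal case: for an arbitrary competitor $\gamma'\in\Pi(\mu_1,\ldots,\mu_N)$, writing $\xi=\sum_i\varphi_i$, one has $\int\xi\, d\gamma=\int c\, d\gamma=C[\gamma]$ (since $\gamma(\Gamma)=1$), while $\int\xi\, d\gamma'\le\int c\, d\gamma'=C[\gamma']$; and because $\int\varphi_i(x_i)\, d\gamma=\int\varphi_i\, d\mu_i=\int\varphi_i(x_i)\, d\gamma'$ depends only on the common marginal $\mu_i$, the two middle integrals coincide, giving $C[\gamma]\le C[\gamma']$.

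The first main step is to produce the tuple. Here I would use the multimarginal analogue of the Rockafellar--Rochet--R\"uschendorf formula \eqref{potential}: fix a base point $\bar z=(\bar x_1,\ldots,\bar x_N)\in\Gamma$ and define each $\varphi_i$ as an infimum, over finite chains of points of $\Gamma$ and over the independent permutations allowed in Definition \ref{multicyclic}, of the corresponding cost increments. Because $c$ is real-valued, $c$-cyclic monotonicity of $\Gamma$ should force these infima to stay $>-\infty$ on $p_{X_i}(\Gamma)$ (the base point yields the value $0$ and monotonicity bounds every admissible chain from below), just as $\varphi(x_0)=0$ keeps $\varphi$ finite in Theorem \ref{RocRuc}; setting $\varphi_i=-\infty$ off $p_{X_i}(\Gamma)$ preserves both splitting relations. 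For measurability I would exploit continuity of $c$: each increment is continuous in the free variable $x_i$, and by separability of $\Gamma$ the infimum may be taken over a countable dense family of chains, so each $\varphi_i$ is an infimum of countably many continuous functions, hence upper semi-continuous and in particular Borel. This is cleaner than the universal-measurability argument needed in Theorem \ref{cFinite}, where $c$ was only Borel.

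The second step is integrability, and this is exactly where the hypothesis $c(x_1,\ldots,x_N)\le\sum_i f_i(x_i)$ with $f_i\in L^1(\mu_i)$ enters. Since $c\ge 0$, every plan satisfies $0\le C[\gamma']\le\sum_i\int f_i\, d\mu_i<\infty$, so all costs are finite and $\xi^+\le c$ is $\gamma$- and $\gamma'$-integrable with a bound independent of the plan. One cannot integrate $\xi=\sum_i\varphi_i$ termwise, so, as in Theorem \ref{cFinite}, I would truncate, setting $\varphi_{i,n}=(n\wedge(\varphi_i\vee -n))$ and $\xi_n=\sum_i\varphi_{i,n}$. Each $\varphi_{i,n}$ is bounded and Borel, hence $\int\varphi_{i,n}\, d\gamma=\int\varphi_{i,n}\, d\mu_i=\int\varphi_{i,n}\, d\gamma'$, giving $\int\xi_n\, d\gamma=\int\xi_n\, d\gamma'$ for every $n$. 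The same monotone-convergence argument as in Theorem \ref{cFinite} (positive part controlled by the integrable bound above) then yields $\int\xi\, d\gamma=\int\xi\, d\gamma'$, and hence the chain of (in)equalities of the first paragraph.

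I expect the construction of the splitting tuple to be the genuine obstacle, and the step that truly distinguishes this result from Theorem \ref{cFinite}. Unlike the two-marginal Theorem \ref{RocRuc}, it is delicate that $c$-cyclic monotonicity (Definition \ref{multicyclic}), with its $N-1$ \emph{independent} permutations, yields a full product splitting $\sum_i\varphi_i$ rather than merely a partial splitting of one variable against the remaining ones; one must verify that the chain infima defining $\varphi_1,\ldots,\varphi_N$ are simultaneously finite on the relevant projections and satisfy both splitting relations. Should the direct formula prove awkward, a safe alternative is to approximate each $\mu_i$ by finitely supported measures, invoke the finite multimarginal equivalence of $c$-cyclic monotonicity and optimality (Theorem \ref{CmIffOptimal}, via linear-programming duality, which produces a genuine splitting tuple at each finite stage), and pass to the limit using continuity of $c$ together with the uniform integrable bound $\sum_i f_i$.
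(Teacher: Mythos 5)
The integration half of your argument is sound, but the first main step rests on a tool that does not exist: there is no multimarginal analogue of the Rockafellar--Rochet--R\"uschendorf chain formula \eqref{potential}, and the paper flags exactly this point (``in the multimarginal case this kind of formula is not available''). The two-marginal formula works because any violation or verification of $c$-cyclic monotonicity reduces, via the cycle decomposition of a single permutation, to chains of one-coordinate swaps, each contributing an increment $c(x_{i+1},y_i)-c(x_i,y_i)$; the infimum over such chains produces one potential $\varphi$, and the second one comes for free as the $c$-transform. With $N-1$ \emph{independent} permutations (Definition \ref{multicyclic}) there is no such reduction to chains, and the splitting property (Definition \ref{cm}) demands constructing all $N$ potentials \emph{simultaneously}, with $\sum_i\varphi_i\le c$ at arbitrary points of the product space --- points none of whose coordinates need be reachable by any chain through $\Gamma$. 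Your finiteness claim (``monotonicity bounds every admissible chain from below'') is also unsubstantiated: in Theorem \ref{RocRuc} the lower bound comes from closing a chain into a cycle, which is precisely what the independent-permutation structure obstructs here.

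Your fallback is in the right direction, but the limit passage is where the whole difficulty lives, and as stated it does not close. Griessler's actual route: first show the plan is \emph{finitely optimal}; then, for each \emph{finite subset} $G\subset\Gamma$ (augmented by a fixed base point $x^0\in\Gamma$, not a finitely supported approximation of the marginals), LP duality gives a $(G\cup\{x^0\},c)$-splitting tuple, which one normalizes so that $\varphi_i(x_i)\le c(x_1^0,\ldots,x_{i-1}^0,x_i,x_{i+1}^0,\ldots,x_N^0)$ for all $i$; the resulting classes $\mathcal{G}_G$ are closed subsets of the compact product $\overline\R^{X_1}\times\cdots\times\overline\R^{X_N}$ with the finite intersection property, so their intersection is nonempty and yields a $(\Gamma,c)$-splitting tuple; measurability is then restored by replacing the tuple with iterated inf-convolutions against $c$, which are upper semicontinuous by continuity of $c$ (your ``countable dense family of chains'' trick has nothing to latch onto once the chain formula is gone); integrability follows from the normalization bound together with $c\le\sum_i f_i$. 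Note why subsets of $\Gamma$ matter: with your scheme the finite-stage tuples are splitting only on the supports of the approximating plans, so equality on $\Gamma$ is lost in the limit, and without a uniform normalization the tuples need not converge at all, since a splitting tuple can be shifted by constants $\varphi_i\mapsto\varphi_i+c_i$, $\sum_ic_i=0$. Once a Borel tuple with the normalization bound is in hand, your truncation argument does go through --- though it is then unnecessary, as the normalization already makes each $\int\varphi_i\,d\mu_i$ well-defined, which is how Griessler concludes.
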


Griessler starts by proving that a $c$-cyclically monotone transport plan is finitely optimal, i.e. that  all its finitely-supported submeasures are optimal with respect to their marginals. He continues by commenting that by a linear programming argument one can show that finite and $c$-cyclically monotone sets are $c$-splitting. The transition from finite $c$-splitting sets to arbitrary $c$-splitting sets is done by first fixing a $c$-cyclically monotone set $\Gamma$ and an element $x^0\in\Gamma$, and then proving that the family
\[\mathcal{F}:=\bigcup_{G\subset \Gamma \text{ finite }}\mathcal{G}_G\,,\]
where
\begin{align*}
\mathcal{G}_G=\{(\varphi_1,\ldots,\varphi_N): \, &\varphi \text{ is a }(G\cup\{x^0\},c)-\text{splitting tuple such that}\\
&\varphi_i(x_i)\le c(x_1^0,\ldots,x_{i-1}^0,x_i,x_{i+1}^0,\ldots, x_N^0)\text{ for all }i=1,\ldots, N\}\,,
\end{align*}
has the finite-intersection property. Hence by the compactness of the space $\overline \R^{X_1}\times\overline\R^{X_2}\times\cdots\times\overline\R^{X_N}$ the set 
\[\bigcap_{G\subset\Gamma\,,~G\text{ finite}}\mathcal{G}_{G}\]
is nonempty, proving the existence of a $(\Gamma,c)$-splitting tuple.

To prove the existence of a Borel measurable tuple Griessler uses the continuity of the cost function: if $(\varphi_1,\ldots,\varphi_N)$ is a $(\Gamma,c)$-splitting tuple, then by first setting
\[\tilde\varphi_1(x_1)=\inf\left\{c(x_1,y_2,\ldots,y_N)-\sum_{j=2}^N\varphi_j(y_j): \ (y_2,\ldots,y_N)\in X_2\times\cdots\times X_N\right\}\]
and then, assuming that $\tilde\varphi_1,\ldots,\tilde\varphi_{i-1}$ are already defined, 
\begin{multline*}
\tilde\varphi_i(x_i)=\inf\{
c(y_1,\ldots,y_{i-1},x_i,y_{i+1},\ldots,y_N)-\sum_{j=1}^{i-1}\tilde\varphi_j(y_j)-\sum_{j=i+1}^N\varphi(y_j): \\ 
y_j\in X_j,~j\in\{1,\ldots,i-1,i+1,\ldots, N\}\}
\end{multline*}
one obtains a $(\Gamma,c)$-splitting tuple that is upper semicontinous and, in particular, Borel measurable. 

To pass from measurable potentials to potentials whose integrals are well-defined (and thus to optimality) Griessler shows that if $(\varphi_1,\ldots,\varphi_N)$ is a $(\Gamma,c)$-splitting tuple and if $x^0\in\Gamma$, then there exists a $(\Gamma, c)$-splitting tuple $(\tilde\varphi_1,\ldots,\tilde\varphi_N)$ such that 
\begin{equation}\label{cbound}
\tilde\varphi_i(x_i)\le c(x_1^0,\ldots,x_{i-1}^0,x_i,x_{i+1}^0,\ldots,x_N^0)~~\text{for all }i\in\{1,\ldots,N\}\text{ and }x_i\in X_i\,.
\end{equation}
This is achieved by first noting that, since $x^0\in\Gamma$ and since $(\varphi_1,\ldots,\varphi_N)$ is a $(\Gamma,c)$-splitting tuple, we have 
\[\sum_{i=1}^N\varphi_i(x_i^0)=c(x_1^0,\ldots,x_N^0)\,,\]
and, in particular, (since $c$ assumes only real values) $\varphi_i(x_i^0)\in\R$ for all $i\in\{1,\ldots,N\}$. 
Then one defines 
\begin{align*}
\tilde\varphi_1(x_1)&=\varphi_1(x_1)+\sum_{j=2}^N\varphi_j(x_j^0)~~~\text{and}\\
\tilde\varphi_i(x_i)&=\varphi_i(x_i)-\varphi_i(x_i^0)~\text{ for }i=2,\ldots,N.
\end{align*}
Starting from a Borel measurable $(\Gamma,c)$-splitting tuple $(\varphi_1,\ldots,\varphi_N)$ one gets the existence of a Borel measurable $(\Gamma,c)$-splitting tuple $(\tilde\varphi_1,\ldots,\tilde\varphi_N)$ satisfying, in addition,  \eqref{cbound}. The fact that the integrals of the functions $\tilde\varphi_i$  are well-defined follows by the assumption that $c$ is bounded from above by a sum $\sum_{i=1}^Nf_i$ of $\mu_i$-integrable functions: for a fixed $x^0\in\Gamma$ that satisfies $\sum_{i=1}^Nf_i(x_i^0)<+\infty$ we have 
\begin{align*}
\int_{X_i}\tilde\varphi_i(x_i)d\mu_i(x_i)&\le \int_{X_i}c(x_1^0,\ldots,x_{i-1}^0,x_i,x_{i+1}^0,\ldots,x_N^0)d\mu_i(x_i)\\ &\le \sum_{j\ne i}f_j(x_j^0)+\int_{X_i}f_i(x_i)d\mu_i(x_i)<+\infty.
\end{align*}
If $\Gamma$ is a $c$-cyclically monotone set on which a transport plan $\gamma\in\Pi(\mu_1,\ldots,\mu_N)$ is concentrated, the the optimality of the plan $\gamma$ now follows easily: if $\gamma'$ is another transport plan, one has (denoting $X=\prod_{i=1}^NX_i$) that
\[ \int_X cd\gamma=\int_X\sum_{i=1}^N\tilde\varphi_id\gamma=\int_X\sum_{i=1}^N\varphi_id\gamma'\le\int_Xc d\gamma'\,.\]

It is an open problem if the $c$-cyclic monotonicity of the support of a plan implies its optimality if some of the assumptions of Griessler are lifted. For example if we consider real-valued, Borel (not necessarily continuous) cost function bounded from above by a sum of integrable functions. For this type of a cost one could prove the existence of a $(c,\Gamma)$-splitting tuple as above, but without the continuity assumption on the cost the existence of a Borel measurable splitting tuple is not clear. Note that in the $2$-marginal case the measurability was based on the Rockafellar-type formula \eqref{potential}, but in the multimarginal case this kind of formula is not available. 

If the cost function assumes the value $+\infty$ there are results in the opposite direction.  In \cite{petrache2020cyclically}, M. Petrache presented an example, for the three-marginal case, of a $c$-cyclically monotone transport plan that is not optimal. In the example the spaces $X_i$ were all equal to $\N$, the natural numbers, and the cost function assumes infinite value. Clearly, this discrete example can naturally be embedded in $\R^3$ where we view the discrete marginal measures as elements of $\mathcal{P}(\R)$. Note that in this case the cost is not continuous. While if we regard $\N$ as a Polish space equipped with the discrete metric, the cost is continuous but not bounded from above by a sum of integrable functions of the marginal measure spaces. 

\begin{exam}[{\cite[Proposition 2.1]{petrache2020cyclically}}] We fix $N=3$ and consider the three-marginal optimal transportation problem with the spaces $X_1=X_2=X_3=\N$ and all marginals equal to the measure 
\[\mu=\sum_{k=1}^\infty 2^{-k}\delta_k\in \mathcal{P}(\N).\]
To define the cost function we fix an auxiliary function $f:\N\to (0,1]$ such that 
\begin{equation}\label{condition_f}
\sum_{k=1}^\infty 4^{-k}(f(2k-1)-f(2k))>\frac16.
\end{equation}
This condition is satisfied, for example, by all decreasing functions $f:\N\to (0,1]$ such that $f(1)>~f(2)+\frac23$. 
We define the cost function $c:(\N)^3\to [0,\infty]$ as follows:
\[\begin{cases}
c(a,b,c) \text{ is symmetric with respect to permuting the coordinates }(a,b,c),\\
c(1,1,1)=1,\\
c(a,a,a+1)=f(a)\text{ for all }a\in\N\text{ and }\\
c=+\infty\text{ for all triples }(a,b,c)\text{ not described by the previous two conditions.}
\end{cases}\]
We consider the following transport plans $\gamma,\bar\gamma\in\Pi(\mu,\mu,\mu)$:
\begin{align*}
\gamma&=\sum_{k=1}^\infty 4^{-k}(\delta_{(2k-1,2k-1,2k)}+\delta_{(2k-1,2k,2k-1)}+\delta_{(2k,2k-1,2k-1)})~~~\text{and}\\
\bar\gamma&=\frac12\delta_{(1,1,1)}+\frac12\sum_{k=1}^\infty 4^{-k}(\delta_{(2k,2k,2k+1)}+\delta_{(2k,2k+1,2k)}+\delta_{(2k+1,2k,2k)}).
\end{align*}
We will show that the plan $\gamma$ is \emph{$c$-cyclically monotone but not optimal}. 

The plan $\gamma$ cannot be optimal since the plan $\bar\gamma$ has a lower cost:
\begin{align*}
C[\gamma]-C[\bar\gamma]&=3\sum_{k=1}^\infty 4^{-k} f(2k-1)-\frac12-3\sum_{k=1}^\infty 4^{-k}f(2k)\\
&=3\sum_{k=1}^\infty 4^{-k}(f(2k-1)-f(2k))-\frac12>0,
\end{align*}
where we have used the property (\ref{condition_f}) of the function $f.$

It remains to show that $\gamma$ is $c$-cyclically monotone. We show that it is finitely-optimal; by Propositions \ref{icmfinitelyoptimal} and \ref{finitelyOptimalCm} of Appendix \ref{sec:App finite optimality} these two notions are equivalent. 
We fix a finitely-supported subplan $\alpha$ of $\gamma$, and another transport plan $\alpha'$ that has the same marginals as $\alpha$.  It is enough to show that $C[\alpha]\le C[\alpha']$. If $C[\alpha']=+\infty$, the claim holds trivially. We may thus assume that $C[\alpha']<+\infty$. We show that in fact in this case $\alpha=\alpha'$; the idea is that the set where for given marginals the cost can be finite is very small. 

In the following we denote by $(\delta_{(a,b,c)})^S$ the symmetrisation
\[(\delta_{(a,b,c)})^S=\frac16(\delta_{(a,b,c)}+\delta_{(a,c,b)}+\delta_{(b,a,c)}+\delta_{(b,c,a)}+\delta_{(c,a,b)}+\delta_{(c,b,a)}).\]
Due to the symmetries of the problem and the structure of the plan $\gamma$ we may assume that for some $M>0$ and coefficients $a_k\ge 0$
\[\alpha=\sum_{k=0}^M a_k (\delta_{(2k-1,2k-1,2k)})^S.\]
The marginals of this plan are all equal to 
\[\mu_\alpha=\sum_{k=1}^Ma_k\left(\frac23\delta_{2k-1}+\frac13\delta_{2k}\right).\]
We may also assume that the measure $\alpha'$ is symmetric. 
Since $C[\alpha']<+\infty$, there exists nonnegative constants $a',a_1',\ldots,a_M'$, $b_1',\ldots,b_M'$ such that
\[a'+\sum_{k=1}^M(a_k'+b_k')=1~~~\text{and}~~~\alpha'=a'\delta_{(1,1,1)}+\sum_{k=1}^M(a_k'(\delta_{(2k-1,2k-1,2k)})^S+b_k'(\delta_{(2k,2k,2k+1)})^S).\]
Since the marginals of $\alpha'$ are all equal to $\mu_\alpha$, we have
\[\mu_\alpha(\{1\})=\frac23a_1=\alpha'(\{1\}\times\N\times\N)=a'+\frac23a_1'\]
and thus
\begin{equation}\label{e1}
2a_1=3a'+2a_1'.
\end{equation}
Similarly we have, for all $k\ge 2$:
\[\mu_\alpha(\{2k-1\})=\frac23a_k=\alpha'(\{2k-1\}\times\N\times\N)=\frac23a_k'+\frac13b_{k-1}'\]
which implies that
 \begin{equation}\label{e2}
2a_k=2a_k'+b_{k-1}'~~~\text{for all } k\ge 2.
\end{equation}
And finally, for $k\ge 1$, we have
\[\mu_\alpha(\{2k\})=\frac13a_k=\alpha'(\{2k\}\times\N\times\N)=\frac13a_k'+\frac23b_k'\]
giving
\begin{equation}\label{e3}
a_k=a_k'+2b_k'~~~\text{for all }k\ge 1.
\end{equation}
Equations (\ref{e2}) and (\ref{e3}) imply that
\[a_k'+2b_k'=a_k'+\frac{b_{k-1}'}{2}~~~\text{for all }k\ge 2,\]
and thus 
\[b_{k+1}'=\frac{b_{k}'}{4}~~~\text{for all }k\ge 1.\]
Since $b_k=0$ for all $k>M$ we have, therefore, $b_k=0$ for all $k=1$. 
Hence, by (\ref{e3}), we have $a_k=a_k'$ for all $k\ge 1$, and this inserted in (\ref{e1}) gives $a'=0$, completing the proof of $\alpha=\alpha'$. 
\end{exam}

The counterexample above shows that neither of the two-marginal Theorems \ref{pratelliatomic} and \ref{thm:Partelli cont cost} of Pratelli can be extended to the multi-marginal case.

\subsection{The $L^\infty$-problem. }
Let $X_1$ and $X_2$ be Polish spaces and $c:X_1\times X_2\to [0,\infty]$  be a lower semi-continuous cost function.
The \emph{$L^\infty$ optimal mass transportation problem} can be stated as follows:
\[ \min_{ \Pi(\mu_1,\mu_2)} C_\infty[\gamma]:= \min_{ \Pi(\mu_1,\mu_2)}  \gamma-\esup c(x_1,x_2)
\tag{$P_\infty$}
\]
This problem is `a limit case' of the classical optimal transport since, if $c\ge 0$, is the variational limit, as $p\to\infty$, of the optimal transport problems with the cost functional $C_p[\gamma]:= \left(\int_Xc^p \,d\gamma\right)^\frac 1p$. In fact, if we take a sequence $(\gamma_p)_{p=1}^\infty$, with $\gamma_p$  minimizer of the corresponding functional $C_p$ (and thus $c$-cyclically monotone), 
we get, as the weak${}^\ast$-cluster points $\gamma=\lim_{p\to\infty}\gamma_p$, some solutions of the problem ($P_\infty$) corresponding to the cost function $c$. If $c$ is continuous and if for each $p$ the value $C_p[\gamma]$ is finite, these solutions are \emph{infinitely cyclically monotone}, in the sense of the following definition:
\begin{defi}
We say that a set $\Gamma\subset X_1\times X_2$ is \emph{infinitely $c$-cyclically monotone}, if for every $k$-tuple of points $(x_1^i,x_2^i)_{i=1}^k$  of $\Gamma$ and for every permutation $\sigma$ of the set $\{1,\ldots, k\}$ we have
\[\max\{c(x_1^i,x_2^i)~|~i\in \{1,\ldots,k\}\}\le \max\{c(x_1^i,x_2^{\sigma(i)})~|~i\in \{1,\ldots,k\}\}\,.\]
We also say that a coupling $\gamma\in \Pi(\mu_1,\mu_2)$ is infinitely cyclically monotone if it is concentrated on an \ICM set. 
\end{defi}
The \ICM transport plans are a class of `well-behaved' solutions to the problem ($P_\infty$). These plans are well-behaved in the sense that the restrictions of \ICM plans are still \ICM and thus optimal (if \ICM plans are optimal). In general, the problem ($P_\infty$) is much more cumbersome than the standard optimal transportation problem. The cost functional is neither linear nor convex, there is no classical duality theory, and there is a high degree of non-uniqueness of the solutions. Loosely speaking, in the problem ($P_\infty$) we are only interested in optimizing the worst case, so locally its solutions can be far from optimal. In this last sense, the \ICM minimizers have better properties, and, at least for continuous cost $c$ that admits finite $C_p$ solutions, the \ICM couplings exist as stated above. 

The question we consider is: are optimal plans infinitely cyclically monotone, and are \ICM plans optimal? 
The classical two approaches to the necessity of the condition (see Section \ref{sec:optimal-ccm}) are based on the linearity or on the duality for convex problems, but as was mentioned this formulation of the problem lacks both. 
In fact, not all plans that are optimal for  ($P_\infty$) are infinitely cyclically monotone. The next example illustrates the typical behavior. By $\mathcal{L}^1|_{[0,1]}$ we denote one-dimensional Lebesgue measure on the interval $[0,1]$.  
\begin{exam}
Let $X_1=X_2=\R$, $c(x,y)=|x-y|$, $\mu=\tfrac12(\mathcal{L}^1|_{[0,1]}+\delta_{1})$, and $\nu=\tfrac12(\mathcal{L}^1|_{[0,1]}+\delta_{10})$, then any plan that sends $1$ to $10$ is optimal, even if the interval $[0,1]$ was mapped to itself in a very ``non-optimal" way (for example by a map $x\mapsto 1-x$). The locally ``best-behaved" optimal transport plan would be $\gamma=\tfrac12((id,id)_\sharp\mathcal{L}^1|_{[0,1]}+\delta_{(1,10)})$, and this plan is, actually, infinitely cyclically monotone.
\end{exam}
Concerning the sufficiency,  Champion, De Pascale, and Juutinen in \cite{champion2008wasserstein} laid the foundations for the two-marginal $L^\infty$ transport on $\R^d$ with the Euclidean distance as a cost function.
 Later Jylh\"a in \cite{jylha2015optimal} proved the corresponding result in Polish spaces for continuous cost functions $c:X_1\times X_2\to[0,\infty]$.   If the continuity assumption is dropped, the optimality of an \ICM plan fails even in the case of a lower semi-continuous cost with finite values. The following counterexample is a slightly modified version of the one given by Pratelli and Ambrosio in \cite{ambrosio-pratelli} of a non-optimal $c$-cyclically monotone transportation plan. In the original counterexample it was necessary that $c$ assumes the value $+\infty$ (since for a lower semi-continuous cost that assumes only finite values, in the integral case, $c$-cyclic monotonicity is equivalent to optimality). 
\begin{exam}
Consider the two-marginal $L^\infty$-optimal transportation problem with marginals $\mu=\nu=\mathcal{L}|_{[0,1]}$ and the cost function
\[c(x,y)=
\begin{cases} 
1&\text{ if }x=y\\
2&\text{ otherwise.}
\end{cases}\]
Let $\alpha$ be and irrational number. Set $T_1=Id_{[0,1]}$ and $T_2:[0,1]\to[0,1]$, $T_2(x)=x+\alpha\pmod 1$. The map $T_1$ is an optimal transportation map for the problem ($P_\infty$) with $C_\infty[T_1]=1$. Since the map $T_2$ has the cost $2$, it cannot be optimal. However, it is infinitely cyclically monotone, as we show below. 
Assume, contrary to the claim, that $T_2$ is not infinitely cyclically monotone. Then there exists a minimal $K\in\N$ and a $K$-tuple of couples $\{x_i,y_i\}_{i=1}^K $, all belonging to the support of the plan given by $T_2$, such that 
\begin{equation*}
\max_{1\le i\le K}c(x_i,y_i)>\max_{1\le i\le K}c(x_{i+1},y_i),
\end{equation*}
with the convention $x_{K+1}=x_1$. 
By the definition of the map $T_2$ we have $y_i=x_i+\alpha\pmod 1$ for all $i$. 
By the definition of the cost $c$, the only form in which this inequality can hold is: $2>1$. The right-hand side  now tells us that $y_i=x_i+\alpha\pmod 1$ for all $i$, that is, $x_{i+1}=x_i+\alpha\pmod 1$ for all $i$. Summing up, this gives (keeping in mind that $x_{K+1}=x_1$) that $x_1=x_1+K\alpha\pmod 1$, contradicting the irrationality of $\alpha$.
\end{exam}

\subsubsection{The multi-marginal case }
The $L^\infty$ problem can be stated analogously also in the multi-marginal case:
For  $N\ge 2$, let $(X_1,\mu_1),\ldots, (X_N,\mu_N)$ be Polish probability spaces equipped with Borel probability measures $\mu_i\in~ \mathcal{P}(X_i)$. Denoting  $X=\prod_{i=1}^NX_i$, we consider the cost function $c:X\to [0,\infty]$ and the following minimization problem
\[
 \min_{\gamma \in \Pi(\mu_1,\ldots,\mu_N)}C_\infty [\gamma]:= \min_{\gamma \in \Pi(\mu_1,\ldots,\mu_N)} \gamma-\esup_{(x_1,\ldots,x_N)\in X} \, c,\tag{P$_\infty^N$}
\]
 where 
\[\Pi(\mu_1,\ldots,\mu_N):=\{\gamma\in\mathcal{P}(X):\,p_{X_i}(\gamma)=\mu_i\text{ for all }i=1,\ldots,N\}.\]
In this context the $c$-cyclic monotonicity takes the following form (compare with Definition \ref{cm}): 
\begin{defi}\label{icm}
We say that a set $\Gamma\subset\prod_{i=1}^NX_i$ is \emph{infinitely $c$-cyclic monotone} if for every $k$-tuple of points $(x^{1,i},\ldots, x^{N,i})_{i=1}^k$ of $\Gamma$ and every $(N-1)$-tuple of permutations $(\sigma_2,\ldots,\sigma_N)$ of the set $\{1,\ldots, k\}$ we have
	\[\max\{c(x_1^{i} ,x_2^{i},\ldots,x_N^{i}):\,i\in \{1,\ldots,k\}\}\le \max\{c(x_1^{i},x_2^{\sigma_2(i)},\ldots,x_N^{\sigma_N(i)}): \, i\in \{1,\ldots,k\}\}\,.\]
	We also say that a coupling $\gamma\in \Pi(\mu_1,\ldots,\mu_N)$ is infinitely cyclically monotone if it is concentrated on an \ICM set. 
\end{defi}
For $N=2$ this definition coincides with the \ICM condition presented above for the two-marginal case, and exactly like in the two-marginal $L^\infty$-optimal transportation problem, also here an optimal plan is not necessarily infinitely cyclically monotone. De Pascale and Kausamo proved in \cite{de2022sufficiency} the sufficiency of the infinite cyclic monotonicity for continuous costs and marginal measures with compact supports:
\begin{theo}[{\cite[Theorem 1.3]{de2022sufficiency}}]\label{icmoptimal}
	Let  $\mu_i \in \Pb(X_i)$ with compact support for $i=1,\dots,N$, let $c:X \to \R\cup\{+\infty\}$ be continuous. If $\gamma\in\Pi(\mu_1,\ldots,\mu_N)$ is an \ICM plan for $c$, then $\gamma$ is optimal. 
\end{theo}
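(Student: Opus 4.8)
The plan is to argue by contradiction, turning a hypothetically cheaper competitor into a finite configuration of genuine points of the \ICM set that violates its defining inequality. Write $\Gamma$ for the support of $\gamma$; it is compact (being closed inside $\prod_i \mathrm{supp}\,\mu_i$), and since the closure of an \ICM set is again \ICM by continuity of $c$, the set $\Gamma$ is itself \ICM. As $c$ is continuous on the compact product $K=\prod_{i=1}^N\mathrm{supp}\,\mu_i$, it is uniformly continuous there and the value $M:=C_\infty[\gamma]$ equals $\max_{\Gamma}c$, attained at some $\bar x\in\Gamma$. Assume $\gamma$ is not optimal, so there is $\gamma'\in\Pi(\mu_1,\ldots,\mu_N)$ with $M':=C_\infty[\gamma']<M$; continuity then forces $c\le M'$ on $\mathrm{supp}\,\gamma'$. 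Fix $\eta>0$ with $M'+\eta<M$, and choose $\delta>0$ so that $c$ oscillates by less than $\eta$ on any subset of $K$ whose projection to each factor has diameter $<\delta$.

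The core is a discretization producing a cheap \emph{recombination} of points of $\Gamma$. I would partition each $\mathrm{supp}\,\mu_i$ into finitely many labelled Borel cells of diameter $<\delta$ and let $T_i$ send a point to its cell. Because each $T_i$ is built only from the shared marginal $\mu_i$, the two label-pushforwards $\hat\gamma=(T_1,\ldots,T_N)_\#\gamma$ and $\hat\gamma'=(T_1,\ldots,T_N)_\#\gamma'$ have \emph{identical} coordinate marginals $m_i=(T_i)_\#\mu_i$. The decisive combinatorial fact is then a multi-marginal Birkhoff-type observation: after refining all masses to a common unit $1/L$, one lists the supports of $\hat\gamma$ and $\hat\gamma'$ as equal-weight tuples $A_1,\ldots,A_L$ and $B_1,\ldots,B_L$, and equality of the $i$-th marginals forces the multisets $\{(A_j)_i\}_j$ and $\{(B_j)_i\}_j$ to coincide; hence there exist permutations $\tau_2,\ldots,\tau_N$ with $(B_j)_i=(A_{\tau_i(j)})_i$ for all $i,j$, once the first coordinates are aligned. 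In words, equal marginals are linked by coordinatewise permutations, which is precisely the shape of the \ICM recombination.

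It remains to lift this back to $\Gamma$. Each tuple $A_j$ charged by $\hat\gamma$ comes from a cell-product met by $\mathrm{supp}\,\gamma$, so I pick a genuine point $x^j\in\Gamma$ inside it, taking $x^{j_0}=\bar x$ for the cell-product containing the maximizer. Running the permutations $\sigma_i:=\tau_i$ of the \ICM definition yields the recombined tuples $y^j=(x_1^j,x_2^{\tau_2(j)},\ldots,x_N^{\tau_N(j)})$, whose $i$-th coordinate lies in the cell labelled $(B_j)_i$; thus $y^j$ sits in the cell-product of $B_j$, which is charged by $\gamma'$ and therefore contains a point of cost $\le M'$. By the choice of $\delta$ this gives $c(y^j)<M'+\eta$ for every $j$, whereas $\max_j c(x^j)\ge c(\bar x)=M$. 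Consequently $\max_j c(x^j)\ge M>M'+\eta>\max_j c(y^j)$, contradicting the \ICM inequality for the tuple $\{x^j\}_{j=1}^L$ with the permutations $(\tau_2,\ldots,\tau_N)$. (When $M=+\infty$ one simply reads $c(\bar x)=+\infty$ as exceeding every threshold, and the same contradiction stands.)

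The main obstacle is the finite rearrangement step and its interaction with the two competing requirements on the partition. One needs the \emph{original} points to lie genuinely in $\Gamma$, so that the \ICM hypothesis applies to them, while simultaneously forcing \emph{every} permuted coordinate into a $\gamma'$-charged cell, since a single expensive recombined point would already spoil the max-cost inequality. The clean equal-weight refinement that produces the permutations $\tau_i$ is only exact for commensurable masses; making it rigorous for arbitrary marginals—reallocating mass within the already-charged cells to reach a common unit without enlarging the supports, all while keeping cells of diameter $<\delta$ so that uniform continuity preserves the strict gap $M-M'>0$—is the delicate point. The conceptual crux, however, is the coordinatewise-permutation lemma: it is exactly what converts the statement ``a cheaper plan exists'' into ``a cheaper recombination of $\Gamma$-points exists'', i.e.\ a genuine failure of infinite cyclic monotonicity.
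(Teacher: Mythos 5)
Your proposal is correct in substance but takes a genuinely different route from the paper's. The paper, following De Pascale--Kausamo, argues in two stages: first it establishes that an \ICM plan is \emph{finitely optimal} (all finitely-supported submeasures are optimal for their marginals, via the coordinatewise-permutation Lemma \ref{permutations} plus a rational-approximation step), and then passes from discretised problems to the full plan by $\Gamma$-convergence. You instead run a one-shot contradiction: discretise both $\gamma$ and a hypothetically cheaper competitor $\gamma'$ through a common cell partition of the compact marginal supports, apply the equal-marginals-implies-coordinatewise-permutations lemma (which is literally Lemma \ref{permutations} of Appendix \ref{sec:App finite optimality}) to the two label pushforwards, and lift the permutations back to genuine points of $\Gamma$ so as to violate the \ICM inequality directly. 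This works precisely because for the sup-cost the discretisation error is a single uniform additive $\eta$, so no limiting procedure is needed; the paper's $\Gamma$-convergence machinery buys robustness (the analogous scheme for the integral cost only controls the cost approximately under discretisation), while your argument buys brevity, at the price of using compactness of the supports and continuity of $c$ more heavily.

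Three repairs are needed, all easy. (i) The commensurability issue you flag as the delicate point is filled exactly by the paper's own device in Appendix \ref{sec:App finite optimality}: the coefficient vectors of $\hat\gamma$ and $\hat\gamma'$ solve $\mathcal A\,{\bf \underline a}=0$ for an integer matrix $\mathcal A$ encoding equality of marginals, and since $\overline{\mathrm{Ker}_{\mathbb Q}\,\mathcal A}=\mathrm{Ker}_{\mathbb R}\,\mathcal A$ one may pass to nearby rational coefficients \emph{without enlarging the supports}; for the $L^\infty$ functional unchanged supports mean the relevant cost levels are preserved exactly, which is why your scheme closes cleanly here (and would not for $C$). (ii) Since $c$ may take the value $+\infty$, it is not uniformly continuous on $K$ in the naive sense; but you only need the one-sided statement that there exists $\delta>0$ with $c<M'+\eta$ on the coordinatewise $\delta$-neighbourhood of the compact sublevel set $\{c\le M'\}$, which follows from a covering argument (or from uniform continuity after composing $c$ with a homeomorphism of $\mathbb R\cup\{+\infty\}$ onto a bounded interval). (iii) The cell-product containing the maximizer $\bar x$ may be $\gamma$-null, because the cells are Borel and need not be neighbourhoods of $\bar x$, so you cannot always take $x^{j_0}=\bar x$; instead observe that $U=\{c>M'+\eta\}$ is open with $\gamma(U)>0$, hence some \emph{charged} cell-product meets $U\cap\Gamma$ in positive measure, and choose that cell's representative inside $U$. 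With these repairs the chain $\max_j c(x^j)>M'+\eta>\max_j c(y^j)$ contradicts the \ICM inequality as you intend, and the proof is complete.
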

Again, since in this case there is no duality available, one cannot rely on $c$-splitting $N$-tuples.  Therefore, the optimality proofs have to be based on different techniques. For example, De Pascale and Kausamo first prove that all finitely-supported submeasures of an \ICM transport plan are optimal with respect to their marginals. Using a suitable finite discretisations of the original \ICM transport problem, they then apply $\Gamma$-convergence techniques to pass from optimality of the discretised transport plan to the optimality of the full transport plan. The key property of the \ICM condition of a support of a transport plan is that it is inherited by subsets.

%%%%%%%%%%%%%%%%%%%%%%%%%%%%%%%%%%%%%%%%%%%%%%%%%%%%

\appendix

\section{Additional example}\label{appendix:example}

The following example is a version of {\cite[Example 3.1]{ambrosio-pratelli}}, also presented in this note in Example \ref{esempioovunque}, but the main focus is shifted to show that $c$-path-bounded sets need not be $c$-connecting. 

\begin{exam}\label{ex:connecing+c-pbdd}
	Let $X=Y=[0,1]$ with the identification of the end points $0$ and $1$, and let $\mu=\nu$ be the Lebesgue measure on $[0,1]$. Fix an irrational number $\alpha \in [0,1)$ and define the cost function $c:X\times Y\to [0,\infty]$ as
\[
c(x,y)=\begin{cases}
1&\text{ if }x=y\\
2&\text{ if }x=y+\alpha\pmod 1\\
+\infty&\text{ otherwise. }
\end{cases}
\] Consider the set
	\[ \Gamma=\{(x,x):x\in X\}.\]
Then the set $\Gamma$ is $c$-path-bounded but not $c$-connecting. 

To see that $\Gamma$ is $c$-path-bounded, fix points $(x,y),(z,w)\in\Gamma$. For any $m\in \N$ and any chain $\{(x_i,y_i)\}_{i=2}^{m-1}$ of points in $\Gamma$, letting $(x_1,y_1)=(x,y)$ and $(x_m,y_m)=(z,w)$, we have 
\[ \sum_{i=1}^{m-1} \big(c(x_i, y_i)-c(x_{i+1}, y_{i})\big) =\sum_{i=1}^{m-1} \big(1-c(x_{i+1}, y_{i})\big)\le 0,  \]   
since for all $i\in\{1,\ldots,m-1\}$ we have $c(x_{i+1},y_i)\in\{1,2,+\infty\}$. Hence, for any start and finish points, we may choose the constant $M((x,y),(z,w))$ in Definition \ref{def:c-path-bdd} to be $0$. 

The set $\Gamma$ is not connecting: if it were connecting then, for example, for points $(0,0),(\frac12,\frac12)\in\Gamma$ there must exist a chain
$(x_1,y_1), \dots, (x_n,y_n)\in\Gamma$ with $(x_1,y_1)=(0,0)$ and $(x_n,y_n)= (\frac12,\frac12)$ such that $c(x_1,y_0), \dots, c(x_n,y_{n-1})< +\infty$. This would imply the existence of $0\le m\le n$ such that $ \frac12\equiv 0+m\alpha\pmod 1$, which is impossible given the irrationality of $\alpha$. 

\end{exam}

\section{Finite spaces}\label{appendix:finitely}
In this appendix we establish the equivalence of $c$-cyclic monotonicity and optimality in finite spaces for real-valued cost functions. Equivalently one could consider finitely-supported measures on arbitrary spaces. In the following,  $X_1,\ldots,X_N$ are finite spaces and $\mu_1,\ldots,\mu_N$ probability measures on  $X_1,\ldots,X_N$, respectively.  We denote $X=\prod_{i=1}^NX_i$.
\begin{theo}\label{CmIffOptimal}
 Let $\gamma\in\Pi(\mu_1,\ldots,\mu_N)$ be an optimal transport plan with respect to a cost function $c:X\to \R$ where $X=\prod_{i=1}^NX_i$ and $X_i$ are finite spaces. Then $\gamma$ is $c$-cyclically monotone if and only if it is optimal. 
\end{theo}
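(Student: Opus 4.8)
The plan is to prove the two implications separately. The forward direction, optimality $\Rightarrow$ $c$-cyclic monotonicity, is a direct local-rearrangement argument, while the reverse direction carries the real content and rests on a combinatorial decomposition of finitely supported plans with equal marginals into coordinate-wise permutations of one another.

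For optimal $\Rightarrow$ $c$-cyclically monotone I would argue contrapositively. If $\mathrm{supp}(\gamma)$ fails the condition of Definition \ref{multicyclic}, there are points $z^1,\dots,z^k\in\mathrm{supp}(\gamma)$ and permutations $\sigma_2,\dots,\sigma_N\in\alt_k$ with $\sum_i c(z^i) > \sum_i c(w^i)$, where $w^i:=(z^i_1,z^{\sigma_2(i)}_2,\dots,z^{\sigma_N(i)}_N)$ and $\sigma_1:=\mathrm{id}$. Since each $z^i$ lies in the finite support, $\varepsilon:=\min_i\gamma(\{z^i\})>0$, and I would form the competitor $\gamma'=\gamma-\tfrac{\varepsilon}{k}\sum_i\delta_{z^i}+\tfrac{\varepsilon}{k}\sum_i\delta_{w^i}$. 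The point to check is that $\gamma'\in\Pi(\mu_1,\dots,\mu_N)$: for each coordinate $j$ the $j$-th marginals of $\sum_i\delta_{z^i}$ and $\sum_i\delta_{w^i}$ coincide because $\sigma_j$ merely permutes the indices, and $\gamma'\ge 0$ because each atom is depleted by at most $\varepsilon$. Then $C[\gamma']-C[\gamma]=\tfrac{\varepsilon}{k}\big(\sum_i c(w^i)-\sum_i c(z^i)\big)<0$, contradicting optimality. Nothing here uses rationality of the masses.

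For $c$-cyclically monotone $\Rightarrow$ optimal I would first treat the case in which $\gamma$ and a given competitor $\gamma'$ have rational masses. Choosing a common denominator $Q$, I would write $\gamma=\tfrac1Q\sum_{j=1}^Q\delta_{p_j}$ and $\gamma'=\tfrac1Q\sum_{j=1}^Q\delta_{p'_j}$, listing each atom with multiplicity, with every $p_j\in\mathrm{supp}(\gamma)$. Equality of the $i$-th marginals forces the multisets $\{(p_j)_i\}_{j}$ and $\{(p'_j)_i\}_{j}$ to agree, so for each $i$ there is a permutation $\tau_i\in\alt_Q$ with $(p'_j)_i=(p_{\tau_i(j)})_i$. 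Reindexing by $\tau_1$ and setting $\sigma_i:=\tau_i\circ\tau_1^{-1}$ for $i=2,\dots,N$, the points $p_1,\dots,p_Q\in\mathrm{supp}(\gamma)$ together with $\sigma_2,\dots,\sigma_N$ feed directly into Definition \ref{multicyclic} and yield $\sum_j c(p_j)\le\sum_j c(p'_j)$, that is $C[\gamma]\le C[\gamma']$. This is exactly the ``permuting the coordinates'' phenomenon advertised in the appendix and is the heart of the argument. (Alternatively, one could invoke linear-programming duality to produce a $c$-splitting tuple in the sense of Definition \ref{cm} for $\mathrm{supp}(\gamma)$ and then conclude optimality exactly as in the proof of Theorem \ref{cFinite}; I would present the permutation route as the more elementary one.)

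To remove the rationality assumption I would argue by contradiction and density. Suppose $\gamma$ is $c$-cyclically monotone but $C[\gamma']<C[\gamma]$ for some competitor $\gamma'$, with supports $\Gamma_0=\mathrm{supp}(\gamma)$ and $\Gamma_0'=\mathrm{supp}(\gamma')$. View $(\gamma,\gamma')$ as a point of the linear subspace $L\subset\R^{\Gamma_0}\times\R^{\Gamma_0'}$ defined by ``$\gamma$ and $\gamma'$ have equal $i$-th marginal for every $i$''. These equations are homogeneous with integer coefficients, so $L$ is rational and its rational points are dense in it. The conditions of being strictly positive on $\Gamma_0$ and $\Gamma_0'$ and of $C[\,\cdot\,]-C[\,\cdot\,]>0$ are open, hence hold on a relative neighbourhood of $(\gamma,\gamma')$ in $L$; picking a nearby rational point and normalising by its (rational, positive) total mass produces finitely supported probability plans with equal marginals, the first still supported in the $c$-cyclically monotone set $\Gamma_0$, and with $C[\tilde\gamma]>C[\tilde\gamma']$, contradicting the rational case. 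The main obstacle I anticipate is precisely this core: getting the coordinate-permutation decomposition stated cleanly and then justifying the reduction from arbitrary real masses to the rational situation; the forward implication and the final integration against equal marginals are comparatively routine since $c$ is real-valued and all sums are finite.
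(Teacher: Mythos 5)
Your proposal is correct and takes essentially the same route as the paper: the forward implication is the paper's local mass-rearrangement argument, and the reverse implication reproduces the paper's strategy (delegated there to Proposition \ref{icmfinitelyoptimal}) of first decomposing equal-marginal rational-mass plans into coordinate-wise permutations of one another (the content of Lemma \ref{permutations}) and then reducing real masses to rational ones via density of rational solutions of the integer-coefficient marginal equations, i.e.\ $\overline{\mathrm{Ker}_{\mathbb{Q}}\,\mathcal A}=\mathrm{Ker}_{\mathbb{R}}\,\mathcal A$. Your cosmetic variants (an open-condition contradiction instead of the paper's $\varepsilon$-approximation and limit, and depleting $\varepsilon/k$ per occurrence so that repeated atoms are handled cleanly) do not change the substance of the argument.
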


\begin{proof} Assume that $\gamma$ is optimal. 
Since we work in finite spaces we denote $X_i=\{x_i^j\}_{j\in\{1,\ldots,n_i\}}$ for $i\in\{1,\ldots,N\}$, where $n_i$ is the cardinality of $X_i$. We also denote $n=n_1\cdot\ldots\cdot n_N$ and 
\[I=\{1,\ldots, n_1\}\times\{1,\ldots,n_2\}\times\cdots\times\{1,\ldots, n_N\}.\]
\newcommand{\ii}{\mathbf{i}}
\newcommand{\xx}{\mathbf{x}}
The index vectors $(i_1,\ldots,i_N)\in I$ will be abbreviated by $\ii$ and vectors $(x_1^{i_1},\ldots,x_N^{i_N})\in X$ by $\xx^\ii$. 
Using this notation, the plan $\gamma$ can be expressed as
\[\gamma=\sum_{\ii\in I}a^\ii \delta_{\xx^\ii}~~~\text{for some coefficients }a^\ii\ge 0. \]

Let us assume, contrary to the claim, that $\gamma$ is not $c$-cyclically monotone. This means that there exist  $k\in\N$, index set $\tilde I:=\{(i_1^j,i_2^j,\ldots,i_N^j)\}_{j=1}^k\subset I$, points $\{(x_1^{i_1^j},\ldots,x_N^{i_N^j})\}_{j=1}^k\subset\text{supp}\gamma$, and permutations $\sigma_2,\ldots,\sigma_N$ of the set $\{1,\ldots, k\}$ such that 
\begin{equation}\label{anticycl}
\sum_{1\le j\le k}c(\xx^{\sigma(j)})<\sum_{1\le j\le k}c(\xx^{\ii^j}),
\end{equation}
where we have abbreviated 
\[\xx^{\ii^j}:=(x_1^{i_1^j},\ldots,x_N^{i_N^j})~~~\text{and}~~~\xx^{\sigma(j)}:=(x_1^{i_1^j},x_2^{i_2^{\sigma_2(j)}},\ldots,x_N^{i_N^{\sigma_N(j)}}).\]
We denote
\[a:=\min\{a^\ii~|~\ii\in \tilde I\}>0\]
and define
\[\tilde\gamma:=\sum_{\ii\in I\setminus \tilde I}a^\ii\delta_{\xx ^\ii}+\sum_{\ii\in \tilde I}(a^\ii-a)\delta_{\xx^\ii}+a\sum_{1\le j\le k}\delta_{\xx^{\sigma(j)}}.\]
The measure $\tilde\gamma$ has the same marginals as $\gamma$ because $\sigma_2,\ldots,\sigma_N$ are permutations. 
Using (\ref{anticycl}) we get
\begin{align*}
C[\tilde\gamma]&=\sum_{\ii\in I\setminus \tilde I}a^\ii c(\xx^\ii)+\sum_{\ii\in \tilde I}(a^\ii-a)c(\xx^\ii)+a\sum_{1\le j\le k}c(\xx^{\sigma(j)})\\
&<\sum_{\ii\in I\setminus \tilde I}a^\ii c(\xx^\ii)+\sum_{\ii\in \tilde I}(a^\ii-a)c(\xx^\ii)+a\sum_{1\le j\le k}c(\xx^{\ii ^j})=C[\gamma],
\end{align*}
contradicting the optimality of $\gamma$. 

The proof of the opposite direction ($c$-cyclical monotonicity implies optimality) is essentially the same as the proof of Proposition \ref{icmfinitelyoptimal} and we do not repeat it here. 
\end{proof}

\section{Universally measurable sets and functions}\label{sec:appendix universally}
Let $X$ be a Polish space. Consider the measure space $(X, \Sigma, \mu)$ with $\mu$ a $\sigma$-finite Borel measure (so $\B (X) \subset \Sigma$). Let $\zero_\mu=\left\{N \subset X \ : \ \mu(N)=0\right\}$, 
then $\tilde \Sigma_\mu := \left\lbrace A \cup N \ : A \in \Sigma,\ N \in \zero_\mu\right\rbrace $ is still a $\sigma$-algebra.
The measure 
 \begin{align}\label{def:completion}
 \tilde \mu: \tilde \Sigma _\mu \to [0,+\infty]
\end{align}  
\[ A \cup N \mapsto \mu(A)\]
is called the \emph{completion} of $\mu$. 

A set $B \subset X$ is called \emph{universally measurable} if it is measurable for the completion of every finite Borel measure $\mu$ on $X$. 

\begin{prop}
The set of universally measurable sets is a Borel $\sigma$-algebra.
\end{prop}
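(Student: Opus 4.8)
The plan is to recognize the collection $\mathcal{U}$ of universally measurable subsets of $X$ as an intersection of $\sigma$-algebras and then invoke the elementary fact that any intersection of $\sigma$-algebras over a common ground set is again a $\sigma$-algebra. Concretely, by the very definition a set $B\subset X$ lies in $\mathcal{U}$ precisely when $B\in\tilde\Sigma_\mu$ for every finite Borel measure $\mu$ on $X$; hence
\[
\mathcal{U}=\bigcap_{\mu}\tilde\Sigma_\mu,
\]
the intersection ranging over all finite Borel measures $\mu$ on $X$. Since the discussion preceding the statement already records that each completion $\tilde\Sigma_\mu$ is a $\sigma$-algebra, the whole argument reduces to checking that this (possibly uncountable) intersection inherits the three $\sigma$-algebra axioms.

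Next I would verify those axioms directly. Because $X\in\tilde\Sigma_\mu$ for every $\mu$, we get $X\in\mathcal{U}$. If $B\in\mathcal{U}$, then $B\in\tilde\Sigma_\mu$ for each $\mu$, and since each $\tilde\Sigma_\mu$ is closed under complementation, $X\setminus B\in\tilde\Sigma_\mu$ for each $\mu$, so $X\setminus B\in\mathcal{U}$. Finally, if $(B_k)_{k\in\N}\subset\mathcal{U}$, then for every fixed $\mu$ each $B_k$ belongs to $\tilde\Sigma_\mu$, whence $\bigcup_k B_k\in\tilde\Sigma_\mu$ by countable closure of that $\sigma$-algebra; as $\mu$ was arbitrary, $\bigcup_k B_k\in\mathcal{U}$. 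This establishes that $\mathcal{U}$ is a $\sigma$-algebra.

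To justify the word ``Borel'' in the statement, I would then observe that $\B(X)\subseteq\tilde\Sigma_\mu$ for every finite Borel measure $\mu$, since passing to the completion only enlarges the underlying Borel $\sigma$-algebra by adjoining $\mu$-null sets. Taking the intersection over all $\mu$ preserves this containment, so $\B(X)\subseteq\mathcal{U}$; thus $\mathcal{U}$ is a $\sigma$-algebra containing the Borel sets, which is the intended meaning of the statement.

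I do not expect a genuine obstacle here: the argument is the standard ``intersection of $\sigma$-algebras'' verification, and every ingredient (that each $\tilde\Sigma_\mu$ is a $\sigma$-algebra and contains $\B(X)$) is already in place. The only point deserving a word of care is the set-theoretic bookkeeping — one should note that on a Polish space the finite Borel measures form a genuine set, so the indexed intersection above is a legitimate operation rather than one over a proper class.
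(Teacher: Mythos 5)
Your proposal is correct and follows exactly the paper's argument: the paper's proof is the one-line observation that the defining properties of a $\sigma$-algebra are stable under intersection, which is precisely your identification $\mathcal{U}=\bigcap_{\mu}\tilde\Sigma_\mu$ together with the axiom-by-axiom check. Your added remarks (that $\B(X)\subseteq\tilde\Sigma_\mu$ for every $\mu$, justifying the word ``Borel'', and that the finite Borel measures form a set) are sound elaborations of the same approach rather than a different route.
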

\begin{proof} 
This follows from the fact that the defining properties of a $\sigma$-algebra are stable for the intersection.
\end{proof}
If $Y$ is another Polish space and $T:X \to Y$ is a Borel map, then the image of a Borel set through $T$ is called an \emph{analytic} set. The following theorem is relevant here but we do not include the proof as it  is beyond the scope of this paper. 

\begin{theo}
Every analytic set is universally measurable.
\end{theo}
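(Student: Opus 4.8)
The plan is to fix an arbitrary finite Borel measure $\mu$ on the Polish space $X$ and to show that every analytic set $A\subseteq X$ is measurable for the completion $\tilde\mu$; since $\mu$ is arbitrary, this is precisely the assertion that $A$ is universally measurable. Throughout I would work with the outer measure $\mu^*(E)=\inf\{\mu(B):E\subseteq B,\ B\in\B(X)\}$ and use the standard characterisation that $E$ is $\tilde\mu$-measurable exactly when it can be sandwiched between Borel sets $F\subseteq E\subseteq B$ with $\mu(B\setminus F)=0$.

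First I would put the analytic set into a usable normal form. Writing $A=T(B_0)$ for a Borel map $T:Z\to X$ between Polish spaces and a Borel set $B_0\subseteq Z$, one reduces (by passing to the graph of $T$ and then to a closed set in an enlarged product space) to the statement that $A$ is the projection onto $X$ of a closed set $C\subseteq X\times\N^{\N}$, or equivalently that $A$ arises from a regular Souslin scheme of closed sets $\{F_s\}_{s\in\N^{<\N}}$ via the Souslin operation $A=\bigcup_{\alpha\in\N^{\N}}\bigcap_{n}F_{\alpha|n}$. This reduction is routine descriptive set theory, and I would only sketch it.

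The substantive step is then one of two equivalent routes. Route (a): verify that $\mu^*$ is a Choquet capacity---monotone, continuous along increasing sequences of arbitrary sets, and continuous along decreasing sequences of compact sets (the last using that $\mu$ is finite and compacts are Borel)---and invoke Choquet's capacitability theorem to conclude that $A$ is capacitable, i.e. $\mu^*(A)=\sup\{\mu(K):K\subseteq A,\ K\text{ compact}\}$. Combining this inner approximation (which yields an $F_\sigma$ set $F\subseteq A$ with $\mu(F)=\mu^*(A)$) with the outer Borel approximation coming directly from the definition of $\mu^*$ (a Borel $B\supseteq A$ with $\mu(B)=\mu^*(A)$) gives $F\subseteq A\subseteq B$ with $\mu(B\setminus F)=0$, hence $A\in\tilde\Sigma_\mu$. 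Route (b): prove directly that the $\sigma$-algebra of $\tilde\mu$-measurable sets is closed under the Souslin operation; since each $F_s$ is Borel and hence measurable, this immediately yields that $A$ is measurable.

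I expect the main obstacle to be exactly this substantive step---either the capacitability theorem together with the verification that $\mu^*$ is a capacity, or the closure of the measurable $\sigma$-algebra under the Souslin operation. The latter is the genuinely delicate part: one fixes a test set $E$ and an $\varepsilon>0$ and must produce, via a tree/diagonalisation argument over the partial schemes $A^{(s)}=\bigcup_{\alpha\supseteq s}\bigcap_n F_{\alpha|n}$, a measurable set trapped between $E\cap A$ and a controlled enlargement, thereby establishing the Carath\'eodory inequality $\mu^*(E)\ge\mu^*(E\cap A)+\mu^*(E\setminus A)$. This is the precise point at which the combinatorics of the scheme meets the countable additivity of $\mu$, and it is the reason the full argument is best relegated to the literature.
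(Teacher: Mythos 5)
The paper offers no proof of this theorem to compare against: it states explicitly that the proof ``is beyond the scope of this paper'' and omits it entirely. Your outline is the standard descriptive-set-theoretic argument, and it is correct as a roadmap; in fact it supplies strictly more detail than the source does. Both of your routes are classical: route (a) is Choquet's capacitability theorem (Dellacherie--Meyer, or Kechris, \emph{Classical Descriptive Set Theory}, Thm.~30.13), and route (b) is the Lusin--Sierpi\'nski/Marczewski theorem that the $\sigma$-algebra of $\tilde\mu$-measurable sets is closed under the Souslin operation. Three points are worth flagging. First, the continuity of $\mu^*$ along increasing sequences of \emph{arbitrary} sets is true but not automatic; it rests on the regularity of the induced outer measure (every $E$ admits a Borel hull $B\supseteq E$ with $\mu(B)=\mu^*(E)$), the same fact you invoke for the outer approximation, so it deserves a sentence rather than a clause. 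Second, Choquet's theorem is usually stated for $K$-analytic ($\mathcal K$-Souslin) sets, so in route (a) you should either note that analytic subsets of Polish spaces are $K$-analytic (being continuous images of the Baire space) or run the capacitability argument on $X\times\N^{\N}$ with the capacity pulled back along the projection. Third, as written your attempt stops exactly where the paper does: the substantive step (verifying capacitability via the tree argument, or the Carath\'eodory inequality for the Souslin scheme) is named and deferred to the literature, so what you have is a correct and well-structured sketch rather than a self-contained proof --- an entirely reasonable choice here, and one consistent with the paper's own decision to omit the argument.
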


\section{Finite optimality}\label{sec:App finite optimality}
In this section we prove that the $c$-cyclic monotonicity of a transport plan is equivalent to it being finitely optimal. We present the proofs directly in the multi-marginal case and for both the integral and the $L^\infty$ optimal transport. 

We start with the definition of the finite optimality.
\newcommand{\Q}{\mathbb{Q}}
\begin{defi}\label{finop}
	Let $\gamma$ be a positive and finite Borel measure on $X$. We say that $\gamma$ is \emph{finitely optimal} if all its finitely-supported submeasures are optimal with respect to their marginals. By submeasure we mean any probability measure $\alpha$ satisfying $\, {\rm supp}(\alpha)\subset {\rm supp}(\gamma)$. 
\end{defi}

\begin{prop}\label{icmfinitelyoptimal}
	If $\gamma\in\Pi(\mu_1,\ldots, \mu_N)$ is $c$-cyclically monotone or infinitely cyclically monotone, then it is finitely optimal for the corresponding optimal transportation problem (integral or $L^\infty$, respectively). 
\end{prop}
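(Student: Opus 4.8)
The plan is to argue by contraposition and, in both the integral and the $L^\infty$ case, to reduce matters to a single direct application of the combinatorial definitions (Definition \ref{multicyclic} and Definition \ref{icm}) to a finite tuple of points. First I would record two preliminary reductions. Since a finitely-supported submeasure $\alpha$ of $\gamma$ has ${\rm supp}(\alpha)\subset{\rm supp}(\gamma)$, its finitely many support points lie in the ($c$-cyclically monotone, resp. infinitely $c$-cyclically monotone) set carrying $\gamma$; for the lower semi-continuous costs under consideration the closure of such a set still satisfies the defining inequalities, so ${\rm supp}(\alpha)$ may be taken to be (infinitely) $c$-cyclically monotone. Secondly, any competitor $\alpha'$ with the same marginals as $\alpha$ is automatically finitely supported: its $i$-th marginal is carried by the finite set $p_{X_i}({\rm supp}\,\alpha)$, so $\alpha'$ is concentrated on the finite product $\prod_i p_{X_i}({\rm supp}\,\alpha)$. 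Thus the entire comparison takes place on a fixed finite subset of $X$.

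Assume, for contradiction, that $\alpha$ is not optimal with respect to its marginals, so there is a competitor $\alpha'$ with $C[\alpha']<C[\alpha]$ (integral case) or $C_\infty[\alpha']<C_\infty[\alpha]$ ($L^\infty$ case). I would then consider the set $\mathcal Q$ of pairs $(r,r')$ of nonnegative measures with ${\rm supp}(r)\subseteq{\rm supp}(\alpha)$, ${\rm supp}(r')\subseteq{\rm supp}(\alpha')$, and with equal marginals to one another. Viewing the measures through their weights on these finite supports, $\mathcal Q$ is cut out by the marginal-matching equalities, whose coefficients are integers; hence $\mathcal Q$ is a \emph{rational} polytope and its rational points are dense in it. The pair $(\alpha,\alpha')\in\mathcal Q$ satisfies an \emph{open} condition — namely $\int c\,dr>\int c\,dr'$ in the integral case, and simply ${\rm supp}(r)={\rm supp}(\alpha)$ in the $L^\infty$ case (which forces $C_\infty[r]=C_\infty[\alpha]>C_\infty[\alpha']\ge C_\infty[r']$). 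I can therefore choose a \emph{rational} pair $(r,r')\in\mathcal Q$, close to $(\alpha,\alpha')$, still satisfying it.

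Clearing denominators, I write $r=\tfrac1m\sum_{t=1}^m\delta_{p_t}$ and $r'=\tfrac1m\sum_{t=1}^m\delta_{q_t}$ as uniform sums of $m$ atoms counted with multiplicity. Because $r$ and $r'$ have the same marginals, for each coordinate $i$ the multisets $\{(p_t)_i\}_t$ and $\{(q_t)_i\}_t$ coincide; after reindexing the list $(q_t)$ so that the first coordinates match those of $(p_t)$, there are permutations $\sigma_2,\dots,\sigma_N\in\mathcal S_m$ with $q_t=\big((p_t)_1,(p_{\sigma_2(t)})_2,\dots,(p_{\sigma_N(t)})_N\big)$. This is exactly the configuration of Definition \ref{multicyclic} (resp. Definition \ref{icm}), and since the points $p_t$ lie in the (infinitely) $c$-cyclically monotone set, it yields $\sum_t c(p_t)\le\sum_t c(q_t)$ (resp. $\max_t c(p_t)\le\max_t c(q_t)$). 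Dividing by $m$ — and noting in the $L^\infty$ case that $\max_t c(p_t)=C_\infty[r]$ while $\max_t c(q_t)\le C_\infty[\alpha']$ — this contradicts the strict inequality produced above, so $\alpha$ must be optimal. Since $\alpha$ was an arbitrary finitely-supported submeasure, $\gamma$ is finitely optimal in the sense of Definition \ref{finop}.

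The computations are routine; the step I expect to require the most care is the passage from real to rational weights while preserving \emph{shared} marginals. One cannot simply approximate $\alpha'$ by rational-weight plans, because the marginals of $\alpha$ may be irrational and then no competitor has all-rational weights at all. The device of perturbing the pair $(r,r')$ \emph{jointly} inside the integer-coefficient polytope $\mathcal Q$ — where one imposes only equality of the two marginals, not their rationality — is precisely what makes the reduction to a clean multiset/permutation representation legitimate, and it is exactly the point compressed into the phrase ``essentially the same'' in the proof of Theorem \ref{CmIffOptimal}.
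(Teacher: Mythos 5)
Your proposal is correct and is, in substance, the paper's own proof of Proposition \ref{icmfinitelyoptimal} run in contrapositive form: your multiset/permutation matching of the atoms after clearing denominators is exactly Lemma \ref{permutations}, and your joint rational approximation of the pair $(r,r')$ inside the integer-coefficient marginal-matching polytope is precisely the paper's step $\overline{\mathrm{Ker}_{\mathbb{Q}}\,\mathcal{A}}=\mathrm{Ker}_{\R}\,\mathcal{A}$ (indeed your treatment of the $L^\infty$ case via the open condition ${\rm supp}(r)={\rm supp}(\alpha)$, giving $C_\infty[r]=C_\infty[\alpha]$ and $C_\infty[r']\le C_\infty[\alpha']$, is slightly cleaner than the paper's corresponding equalities). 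One caveat: your preliminary claim that for merely lower semi-continuous costs the closure of a (infinitely) $c$-cyclically monotone set still satisfies the defining inequalities is false as stated, since lower semicontinuity bounds the \emph{shuffled} side of the inequality from below, which is the wrong direction (the limit value at a shuffled tuple may drop below the approximating values), so one would need continuity, or upper semicontinuity at the shuffled tuples; fortunately this reduction is inessential here, as the hypothesis is used, both by you and by the paper, only through the atoms of the finitely-supported submeasure, which are points of ${\rm supp}(\gamma)$ and are taken to lie in the monotone set.
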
 

\begin{lemm}\label{permutations}
	Let $\alpha=\sum_{i=1}^lm_i\delta_{( x^{1,i},\ldots, x^{N,i})}$ and $\overline\alpha=\sum_{i=1}^{\overline l}\overline{m}_i\delta_{(\xbar^{1,i},\ldots,\xbar^{N,i})}$ be two discrete measures with positive, integer coefficients and the same marginals. Let us denote by $\tilde l=m^1 + \dots +m^l$ the number of rows of the following table 
$$	
\left. \begin{array}{lll}  
x^{1,1} & \dots & x^{N,1} \\
\vdots &   & \vdots \\
x^{1,1} & \dots  & x^{N,1}
\end{array}
\right \rbrace
\begin{array}{l}
\\
m^1 \mbox{- times}\\
\\
\end{array} 
$$
$$ \begin{array}{llllllllll}  
\dots & \dots & \dots & & & & & & & 
\end{array}
$$ 
$$
\left.\begin{array}{lll}  
x^{1,l} & \dots & x^{N,l} \\
\vdots &   & \vdots \\
x^{1,l} & \dots  & x^{N,l}
\end{array}
\right \rbrace
\begin{array}{l}
\\
m^l \mbox{- times}\\
\\
\end{array} 
$$
where the first $m^1$ rows are equal among themselves, the following $m^2$ rows are equal among themselves and so on.
Let $\overline A$ be the analogous table associated to $\overline \alpha$. Then $\overline A$ has $\tilde l$ rows and  there exist $(N-1)$ permutations of the set $\{1,\ldots,\tilde l\}$ such that 
$\overline{A}$ is equal to
$$	
\begin{array}{lll}  
x^{1,1} & \dots & x^{N,\sigma^N(1)} \\
\vdots &   & \vdots \\ 
x^{1,1} & \dots & x^{N,\sigma^N (m_1)}\\
x^{1,2} & \dots & x^{N, \sigma^N(m_1 +1)}\\
\vdots &   & \vdots \\ 
x^{1,l} & \dots & x^{N,\sigma^N(m_1+\dots+m_{l-1}+1)}\\
\vdots &   & \vdots  \\ 
x^{1,l} & \dots & x^{N, \sigma^N (\tilde l)}.
\end{array} 
$$
\end{lemm}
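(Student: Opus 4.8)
The plan is to recast the statement as a purely combinatorial fact about the two expanded tables $A$ and $\overline A$, and to observe that the equality of marginals is exactly the statement that, column by column, the two tables carry the same multiset of entries. Write $\mu_j=p_{X_j}(\alpha)$ for the $j$-th marginal. Since the coefficients $m_i$ are positive integers, for each value $z\in X_j$ the mass $\mu_j(\{z\})=\sum_{i:\,x^{j,i}=z}m_i$ is precisely the number of rows of $A$ whose $j$-th entry equals $z$; the same holds for $\overline A$ with the $\overline m_i$. Hence the hypothesis that $\alpha$ and $\overline\alpha$ share all marginals translates into the assertion that, for every $j\in\{1,\ldots,N\}$, the multiset of entries in the $j$-th column of $A$ coincides (with multiplicities) with the multiset of entries in the $j$-th column of $\overline A$.

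First I would settle the claim about the number of rows. The total mass of $\alpha$ equals the mass of any of its marginals, so $\tilde l=\sum_{i=1}^l m_i=\mu_1(X_1)=\overline\mu_1(X_1)=\sum_{i=1}^{\overline l}\overline m_i$, which is the number of rows of $\overline A$; thus $A$ and $\overline A$ both have $\tilde l$ rows. Applying the column-multiset equality to $j=1$, the first column of $\overline A$ has the same multiset of entries as the first column of $A$, so I may reorder the rows of $\overline A$ so that, read top to bottom, its first column becomes $x^{1,1}$ repeated $m_1$ times, then $x^{1,2}$ repeated $m_2$ times, and so on, i.e.\ so that it is identical to the first column of $A$ as displayed. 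I fix this row ordering of $\overline A$ once and for all.

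With the rows of $\overline A$ now frozen, I would treat each remaining column separately. For $j\in\{2,\ldots,N\}$ the $j$-th column of $A$ and the $j$-th column of $\overline A$ are two sequences of the common length $\tilde l$ which, by the column-multiset equality, have the same multiset of entries; any two such sequences are rearrangements of one another, so there is a permutation $\sigma^j$ of $\{1,\ldots,\tilde l\}$ with the entry of $\overline A$ in row $p$, column $j$, equal to the entry of $A$ in row $\sigma^j(p)$, column $j$. Since the first column is left untouched and each of the columns $2,\ldots,N$ is matched by its own $\sigma^j$, the reordered $\overline A$ is exactly the table displayed in the statement, with $x^{N,\sigma^N(p)}$ read as the $N$-th entry of the $\sigma^N(p)$-th row of $A$ (and analogously for the intermediate columns).

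I expect no serious obstacle; the argument is essentially bookkeeping. The points needing care are interpretive rather than substantive: one must read $x^{j,\sigma^j(p)}$ in the target as the $j$-th entry of row $\sigma^j(p)$ of the \emph{expanded} table $A$ (whose rows are indexed by $\{1,\ldots,\tilde l\}$), not by the original index set $\{1,\ldots,l\}$; and one must note that the two operations — reordering the rows of $\overline A$ to align the first column, and then independently extracting a permutation for each of the columns $2,\ldots,N$ — do not conflict, since once the global row order is fixed each column is an independent sequence whose multiset is already pinned down by the corresponding marginal. That the columns may be permuted independently is exactly what lets a single fixed row ordering serve all of them at once.
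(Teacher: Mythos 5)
Your proposal is correct and follows essentially the same route as the paper's proof: both reduce the marginal equality to the statement that each column of $A$ and $\overline A$ carries the same multiset of entries (and that total masses agree, fixing the row count $\tilde l$), from which the permutations $\sigma^2,\ldots,\sigma^N$ are extracted column by column. Your write-up merely makes explicit two steps the paper leaves implicit — the initial reordering of the rows of $\overline A$ to align the first column, and the re-indexing of $x^{j,\cdot}$ by rows of the expanded table — which is sound bookkeeping, not a different argument.
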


\begin{proof}
	For each $k\in\{1,\ldots, N\}$, the $k$-th marginal of $\alpha$ is given by the sum of the Dirac masses centered on the points of the $k$-th column of the table $A$ with multiplicity. Analogously, the $k$-th marginal of $\overline \alpha$ is given by the sum of the Dirac masses centered at the points of the $i$-th column of the table $\overline A$ with multiplicity. Since the marginals of $\alpha$ and $\overline \alpha$ are the same, each point $x^{k,i}$ appearing in the  $k$-th marginal must appear in both matrices the same number of times, proving the existence of the bijections $\sigma^2,\ldots,\sigma^N$ as required. This also implies that $\overline A$ has $\tilde l$ rows.  
\end{proof}

\begin{proof}[Proof of Proposition \ref{icmfinitelyoptimal}] We fix a finitely-supported submeasure $\alpha=\sum_{i=1}^l a_i\delta_{X^i}$ of $\gamma$. We need to show that $\alpha$ is an optimal coupling of its marginals. To do this, we fix another coupling, $\overline\alpha= \sum_{i=1}^{\overline l} \overline a _i\delta_{\overline X^i}$, with the same marginals as $\alpha$. We have to show that 
\begin{equation}\label{eq:costclaim}
\tilde C[\alpha]\le \tilde C[\overline \alpha],
\end{equation}
where $\tilde C$ is any of the two total costs under consideration.

Let us first assume that the discrete measures $\alpha$ and $\overline \alpha$ have rational coefficients. We consider the measures $M\alpha$ and $M \overline \alpha$, where $M$ is the product of the denominators of the coefficients of $\alpha$ and $\overline \alpha$. They are discrete measures having positive, integer coefficients and the same marginals, so we can apply Lemma \ref{permutations} to find permutations $\sigma^2,\ldots,\sigma^N$ such that $M \alpha$ and $M \overline \alpha$ have representations $A$ and $\overline A$, respectively. If $\tilde C=C$ we have, using the $c$-cyclical monotonicity of $\alpha$
\[MC[\alpha]=\sum_{i=1}^{\tilde l}  c(x^{1,i},\ldots, x^{N,i})\le \sum_{i=1}^{\tilde l} c(x^{1,i},x^{2,\sigma^2(i)},\ldots, x^{N, \sigma^N(i))})=MC[\overline \alpha],\]
proving the optimality of $\alpha$. 
If $\tilde C=C_\infty$, the conclusion is also immediate:
\[C_\infty[\alpha]=\max_{1\le i\le \tilde k}c(x^{1,i},\ldots, x^{N,i})\le \max_{1\le i\le \tilde k}c(x^{1,i},x^{2,\sigma^2(i)},\ldots, x^{N,\sigma^N(i)})= C_\infty[\overline \alpha].\]

Now, assume that $\alpha$ and $\overline \alpha$ have real (not necessarily rational) coefficients, and that
\[\alpha:= \sum_{i=1}^l a_i \delta_{X^i}, \ \ \ \overline \alpha:= \sum_{i=1}^{\overline l} \overline a_i \delta_{\overline X ^i}.
\] 
We show that for all $\varepsilon >0$ there exist two discrete measures 
\[\beta:= \sum_{i=1}^l q_i \delta_{X^i} \ \mbox{and} \  \ \overline \beta:= \sum_{i=1}^{\overline l} \overline q_i \delta_{\overline X ^i},
\] 
with the same marginals, $q_i, \overline q_i \in \Q$ and 
\[|a_i-q_i| <\varepsilon, \ \ \ |\overline a_i-\overline q_i| <\varepsilon.
\]
Being concentrated on $X^1,\dots,X^l$ and $\overline X^1, \dots \overline X^{\overline l}$ is equivalent to the fact that the vector ${\bf \underline a}:=(a_1, \dots, a_l, \overline a_1, \dots, \overline a_{\overline l})$ is a solution of 
\[\mathcal A {\bf \underline a}=0,\]
where $\mathcal A$ is a matrix with coefficients $1, 0, -1$.
Indeed, if we write, for example, the equality between the first two marginals we obtain
\[ \sum_{i=1}^l a_i \delta_{x^{1,i}}= \sum_{i=1}^{\overline l} \overline a_i \delta_{\overline x ^{1,i}}.
\] 
Therefore, some of the points $\overline x^{1,i}$ must coincide with, for example, $x^{1,1}$ which implies that for two sets of indices we have 
\[\sum_{i\in I} a_i =  \sum_{j\in J} \overline a_j.\] 
Since the matrix $\mathcal A$ has integer coefficients
\[\overline{Ker_\Q \mathcal A} = Ker_\R \mathcal A,\] 
and this allows to choose $\beta$ and $\overline \beta$.
Since $C[\alpha]\approx C[\beta]$, $C[\overline \alpha]\approx C[\overline \beta]$, $C_\infty [\alpha]=C_\infty [\beta]$ and $C_\infty[ \overline \alpha]=C[\overline \beta]$, this finishes the proof. 
\end{proof} 
The following proposition gives the opposite implication:
\begin{prop}\label{finitelyOptimalCm}
Let $\gamma\in \Pi_N(\mu_1,\ldots,\mu_N)$ be a finitely optimal transport plan. Then $\gamma$ is $c$-cyclically monotone or infinitely cyclically monotone, depending on the underlying transport problem.
\end{prop}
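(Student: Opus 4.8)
The goal is to prove Proposition \ref{finitelyOptimalCm}: that finite optimality of $\gamma$ implies $c$-cyclic monotonicity (in the integral case) or infinite cyclic monotonicity (in the $L^\infty$ case). The plan is to argue by contraposition in both cases simultaneously. Suppose $\gamma$ fails to be $c$-cyclically monotone (respectively, infinitely $c$-cyclically monotone). By the relevant definition there exist $k\in\N$, points $(x_1^{j},\ldots,x_N^{j})_{j=1}^k$ lying in $\mathrm{supp}(\gamma)$, and permutations $\sigma_2,\ldots,\sigma_N\in\mathcal{S}_k$ that witness the failure of the defining inequality. The key is that these witnessing points live in the support of $\gamma$, and I want to produce from them a genuine finitely-supported submeasure whose marginals admit a strictly cheaper competitor, contradicting finite optimality.

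First I would build the candidate submeasure $\alpha$. The natural choice is $\alpha=\tfrac1k\sum_{j=1}^k\delta_{(x_1^{j},\ldots,x_N^{j})}$, which is a probability measure concentrated on $\mathrm{supp}(\gamma)$ and hence a finitely-supported submeasure in the sense of Definition \ref{finop}. Its marginals are the uniform measures on the respective columns. Using the permutations $\sigma_2,\ldots,\sigma_N$ I define the competitor $\overline\alpha=\tfrac1k\sum_{j=1}^k\delta_{(x_1^{j},x_2^{\sigma_2(j)},\ldots,x_N^{\sigma_N(j)})}$. Because each $\sigma_i$ is a permutation of $\{1,\ldots,k\}$, the $i$-th marginal of $\overline\alpha$ coincides with that of $\alpha$ for every $i$, so $\alpha$ and $\overline\alpha$ are couplings of the same marginals. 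Now in the integral case the failure of $c$-cyclic monotonicity reads $\sum_j c(x_1^j,x_2^{\sigma_2(j)},\ldots,x_N^{\sigma_N(j)})<\sum_j c(x_1^j,\ldots,x_N^j)$, which dividing by $k$ gives $C[\overline\alpha]<C[\alpha]$; in the $L^\infty$ case the failure reads $\max_j c(x_1^j,x_2^{\sigma_2(j)},\ldots)<\max_j c(x_1^j,\ldots,x_N^j)$, i.e. $C_\infty[\overline\alpha]<C_\infty[\alpha]$. Either way $\alpha$ is not optimal for its own marginals, contradicting the finite optimality of $\gamma$.

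One technical point I would make explicit is that the witnessing points need not be distinct, so $\alpha$ as written may place mass on fewer than $k$ locations; this is harmless, since counting multiplicities the marginal-matching argument still goes through verbatim (permuting the second through $N$-th coordinate labels within the multiset of $k$ rows preserves each column as a multiset). This is essentially the same bookkeeping as in Lemma \ref{permutations}, read in reverse.

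The only real obstacle is making sure the witnessing points genuinely lie in $\mathrm{supp}(\gamma)$ rather than merely in a $\gamma$-full set, so that $\alpha$ qualifies as a submeasure under Definition \ref{finop}; but this is immediate, since both Definition \ref{multicyclic} and Definition \ref{icm} already quantify over tuples of points of the set on which $\gamma$ is concentrated, and we may take that set to be $\mathrm{supp}(\gamma)$ itself. I expect this to be the shortest of the two implications, being the direct converse of the computation carried out in the proof of Proposition \ref{icmfinitelyoptimal}; indeed this is exactly why the authors note there that the reverse direction of Theorem \ref{CmIffOptimal} is ``essentially the same'' argument.
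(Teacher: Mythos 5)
Your proposal is correct and takes essentially the same route as the paper's proof: you form the uniform submeasure $\alpha=\tfrac1k\sum_{j}\delta_{(x_1^j,\ldots,x_N^j)}$ from a violating tuple in $\mathrm{supp}(\gamma)$, permute the coordinates to obtain a competitor with the same marginals, and contradict finite optimality, treating the $L^\infty$ case by replacing sums with maxima exactly as the paper does. Your added remarks on multiplicities and on the witnessing points lying in $\mathrm{supp}(\gamma)$ are harmless clarifications of the same argument.
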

 
\begin{proof}First, we present the proof in the case of the integral optimal transport problem. 

Assume on the contrary that there exist points $\{(x_1^i,\ldots,x_N^i)\}_{i=1}^k\subset {\rm supp}(\gamma)$ and $N-1$ permutations $\sigma_2,\ldots,\sigma_{N}$ of the set $\{1,\ldots, k\}$ such that
\begin{equation}\label{violateCm}
\sum_{i=1}^kc(x_1^i,x_2^{\sigma_2(i)},\ldots, x_N^{\sigma_N(i)})<\sum_{i=1}^kc(x_1^i,x_2^i,\ldots , x_N^i).
\end{equation}
Consider the finite submeasure $\alpha$ of $\gamma$, defined by 
\[\alpha=\frac1k\sum_{i=1}^k\delta_{(x_1^i,x_2^i,\ldots,x_N^i)},\]
and the measure 
\[\alpha'=\frac1k\sum_{i=1}^k\delta_{(x_1^i,x_2^{\sigma_2(i)},\ldots,x_N^{\sigma_N(i)})}.\]
Since $\sigma_2,\ldots,\sigma_N$ are bijections, the measures $\alpha$ and $\alpha'$ have the same marginals. However, by inequality (\ref{violateCm}) we have $C[\alpha']<C[\alpha]$, contradicting the finite optimality of $\gamma$. 

The case of the $L^\infty$ optimal transport is analogous. The only difference is that inequality (\ref{violateCm})
takes the form
\[\max_{1\le i\le k}c(x_1^i,x_2^{\sigma_2(i)},\ldots, x_N^{\sigma_N(i)})<\max_{1\le i\le k}c(x_1^i,x_2^i,\ldots , x_N^i).\]
\end{proof}

\section*{Acknowledgement}

The research of the first author is part of the project \emph{Metodologie innovative per l'analisi di dati a struttura complessa} financed by the \emph{Fondazione Cassa di Risparmio di Firenze}.

The second author acknowledges the support of GNAMPA-INDAM and of the PRIN (Progetto di ricerca di rilevante interesse nazionale) 2022J4FYNJ,  \emph{Variational methods for stationary and evolution problems with singularities and interfaces}.

A visit in Firenze of the third author was partially financed with {\it ``Fondi di ricerca di ateneo, ex 60 $\%$''}  of the  University of Firenze.

\end{document}